	\newcommand{\bbR}{\mathbb R}
	\newcommand{\RP}{\mathbb R \mathbb P}
\DeclareFontFamily{U}{mathx}{\hyphenchar\font45}
\DeclareFontShape{U}{mathx}{m}{n}{<-> mathx10}{}
\DeclareSymbolFont{mathx}{U}{mathx}{m}{n}
\DeclareMathAccent{\widebar}{0}{mathx}{"73}
\theoremstyle{definition}
\theoremstyle{plain}
\newtheorem{theorem}{Theorem}[section]
\newtheorem{proposition}[theorem]{Proposition}
\newtheorem{lemma}[theorem]{Lemma}
\newtheorem{corollary}[theorem]{Corollary}
\newtheorem{question}[theorem]{Question}
\newtheorem{example}[theorem]{Example}
\newtheorem{definition}[theorem]{Definition}
\newtheorem*{remark}{Remark}
\newtheorem*{notation}{Notation}
\numberwithin{equation}{section}
\begin{document}

\title{Leaves of Foliated Projective Strutures}
\author{Alexander Nolte}
\address{Rice University, Houston TX, USA}
\email{alex.nolte@rice.edu}
\maketitle

\begin{abstract}
The $\text{PSL}(4,\mathbb{R})$ Hitchin component of a closed surface group $\pi_1(S)$ consists of holonomies of properly convex foliated projective structures on the unit tangent bundle of $S$. We prove that the leaves of the codimension-$1$ foliation of any such projective structure are all projectively equivalent if and only if its holonomy is Fuchsian. This implies constraints on the symmetries and shapes of these leaves.

We also give an application to the topology of the non-${\rm T}_0$ space $\mathfrak{C}(\RP^n)$ of projective classes of properly convex domains in $\RP^n$. Namely, Benz\'ecri asked in 1960 if every closed subset of $\mathfrak{C}(\RP^n)$ that contains no proper nonempty closed subset is a point. Our results imply a negative resolution for $n \geq 2$. 
\end{abstract}

\section{Introduction}

A $\text{PSL}(4,\mathbb{R})$ Hitchin representation $\rho$ of a closed surface group $\Gamma$ induces a curious $\Gamma$-invariant curve $\mathfrak{s}_\rho$ from the Gromov boundary $\partial \Gamma$ to the space $\mathfrak{C}$ of projective classes of properly convex domains in $\RP^2$. We call $\mathfrak{s}_\rho$ the \textit{leaf map} of $\rho$, and study it here.

As for other equivariant maps from $\partial \Gamma$ arising from geometry (e.g. \cite{benoist2004convexesI} \cite{bowen1979hausdorff} \cite{brigeman2022hessian} \cite{cannon2007group} \cite{guichard2005regularity} \cite{potrie2017eigenvalues-entropy}), the regularity and irregularities of $\mathfrak{s}_\rho$ are salient and interesting. The relevant aspects of our setting have an idiosyncratic character due to the point-set topological richness of $\mathfrak{C}$. Namely, $\mathfrak{C}$ is non-separated (i.e. not ${\rm T}_0$) and contains both large families of closed one-point sets and dense one-point sets \cite{benzecri1960thesis} \cite{goldman1990convex} \cite{kacVinberg1967quasihomogeneous}.

We prove $\mathfrak{s}_\rho$ is constant if and only if $\rho$ is Fuchsian. A proposition of Benoist \cite{benoist2023chat} then implies that, for non-Fuchsian $\rho$, images of leaf maps are closed in $\mathfrak{C}$, are not points, and are \textit{minimal} in the sense that they contain no proper nonempty closed subset. It follows that non-point minimal closed sets exist in the space $\mathfrak{C}(\RP^n)$ of projective classes of properly convex domains in $\RP^n$ ($n \geq 2$). The existence of non-point minimal closed sets is a basic question for a non-separated space. It has been open for $\mathfrak{C}(\RP^n)$ since Benz\'ecri posed the question in 1960 (\cite{benzecri1960thesis} \S V.3).

Let us be more detailed. By work of Guichard-Wienhard \cite{guichard2008convex}, $\text{PSL}(4,\bbR)$ Hitchin representations are exactly the holonomies of properly convex foliated projective structures on the unit tangent bundle $T^1(S)$, which are a refinement of $(\text{PSL}(4,\mathbb{R}), \mathbb{RP}^3)$ structures on $T^1S$ (see \S \ref{convex-projective-structures-background}). By definition, the developing map of such a projective structure maps leaves of the semi-stable geodesic foliation $\overline{\mathcal{F}}$ of $T^1\widetilde{S}$ to properly convex domains in projective planes. The leaf space of $\overline{\mathcal{F}}$ is identified with $\partial \Gamma$, and $\mathfrak{s}_\rho(x)$ is defined for $x \in \partial \Gamma$ as $[\text{dev}_\rho x] \in \mathfrak{C}$. For $\rho$ Fuchsian, $\mathfrak{s}_\rho$ is constant with value the ellipse.

Leaf maps exhibit counter-intuitive phenomena. For instance, $\mathfrak{s}_\rho$ maps any nonempty open set $U \subset \partial \Gamma$ onto all of $\mathfrak{s}_\rho(\partial \Gamma)$. In general, determining when leaf maps are constant is made difficult by the non-separation of $\mathfrak{C}$. We resolve the matter:

\begin{theorem}\label{theorem-fuchsian-locus-characterization}
    Let {\rm{$\rho \in \text{Hit}_4(S)$}}. The following are equivalent:
        \begin{enumerate}
            \item $\rho$ is Fuchsian,
            \item The leaf map $\mathfrak{s}_\rho$ is constant,
            \item The leaf map $\mathfrak{s}_\rho$ has countable image,
            \item There exists a leaf $\mathfrak{s}_\rho(x)$ that is divisible, a closed point of $\mathfrak{C}$, or has non-discrete projective automorphism group.
        \end{enumerate}
\end{theorem}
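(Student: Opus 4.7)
The plan is to close the cycle $(1) \Rightarrow (2) \Rightarrow (3)$, $(2) \Rightarrow (4)$, $(3) \Rightarrow (2)$, $(4) \Rightarrow (2)$, and $(2) \Rightarrow (1)$. The trivial parts are $(1) \Rightarrow (2)$, noted in the paragraph above the theorem; $(2) \Rightarrow (3)$, since a constant map has singleton image; and $(2) \Rightarrow (4)$, since the ellipse is divisible by any cocompact Fuchsian group, is a closed point of $\mathfrak{C}$, and has non-discrete automorphism group $\text{PGL}(2,\bbR) \subset \text{PGL}(3,\bbR)$.

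The structural input driving the remaining directions is that $\mathfrak{s}_\rho$ is continuous (inherited from continuity of the developing map through the quotient topology on $\mathfrak{C}$) and $\Gamma$-invariant on $\partial\Gamma$ (leaves along a $\Gamma$-orbit are projectively equivalent via $\rho$), while $\Gamma$ acts minimally on $\partial\Gamma$. Consequently, for any closed $F \subseteq \mathfrak{C}$, the preimage $\mathfrak{s}_\rho^{-1}(F)$ is closed and $\Gamma$-invariant, hence either empty or all of $\partial\Gamma$. This immediately yields $(4) \Rightarrow (2)$ in the closed-point subcase by taking $F = \{\mathfrak{s}_\rho(x)\}$. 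For the other two subcases of $(4)$, I would invoke the classification of properly convex domains in $\RP^2$ with large symmetry (only ellipses and triangles have non-discrete $\text{Aut}$; divisible domains in $\RP^2$ are closed points of $\mathfrak{C}$ by classical Benz\'ecri/Benoist rigidity), excluding triangles as possible Hitchin leaves via $C^1$-regularity \cite{guichard2005regularity}, and reducing to the closed-point case for the ellipse. Finally $(2) \Rightarrow (1)$ would use the standard characterization of the Fuchsian locus in $\text{Hit}_4(S)$: constancy of $\mathfrak{s}_\rho$ at the ellipse forces the developing map to be compatible with the irreducible embedding $\text{PSL}(2,\bbR) \hookrightarrow \text{PSL}(4,\bbR)$ and thereby Fuchsian holonomy.

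The most delicate direction is $(3) \Rightarrow (2)$. Applying the same closed-preimage principle to $F = \overline{\{\Omega\}}$ for every $\Omega \in \mathfrak{s}_\rho(\partial\Gamma)$ yields $\mathfrak{s}_\rho(\partial\Gamma) \subseteq \overline{\{\Omega\}}$ for every such $\Omega$; equivalently, all points of the image are pairwise topologically indistinguishable in $\mathfrak{C}$. The main obstacle is that $\mathfrak{C}$ is not $T_0$, so indistinguishability does not a priori force equality. To finish, I would combine this with the open-mapping property recorded in the introduction: a countable image partitions $\partial\Gamma$ into countably many dense $\Gamma$-invariant fibers, each mapping to a point lying in the closure of every other. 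Parametrizing the indistinguishability classes in $\mathfrak{C}$ using Benz\'ecri's cocompactness theorem for the $\text{PGL}(3,\bbR)$-action on pointed properly convex domains, together with the shape constraints on Hitchin leaves (strict convexity and $C^1$-regularity of their boundaries in the ambient projective plane), the expectation is that a countable family of mutually indistinguishable Hitchin leaves is forced to collapse to a single projective class, closing the argument and completing the equivalence.
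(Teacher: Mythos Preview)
Your treatment of the easy implications and the closed-point subcase of $(4)\Rightarrow(2)$ is fine, but the three substantive directions all have real gaps.

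The central one is $(2)\Rightarrow(1)$. You assume without argument that the constant value of $\mathfrak{s}_\rho$ is the ellipse, and then assert that ellipse leaves force Fuchsian holonomy by ``compatibility with the irreducible embedding.'' Neither step is available. There is no a priori reason the constant value should be the ellipse; ruling out the alternative is the main content of the paper and occupies all of \S\ref{subsection-discrete-case}. In the discrete-automorphism case the paper runs the Baire category argument (Proposition~\ref{key-reduction}) to manufacture a \emph{continuous} projective equivalence of leaves over an open arc, then a sliding argument (Proposition~\ref{I-was-dumb}) to pin down where this equivalence sends a specific boundary point, which via Lemma~\ref{regularity-lemma} yields the homogeneous polynomial constraint \eqref{eigenvalue-constraint-homogeneous-form} on the log-eigenvalues of every $\rho(\gamma)$. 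That constraint is then shown incompatible with Zariski density in $\mathrm{SL}(4,\bbR)$ or $\mathrm{Sp}(4,\bbR)$ via Benoist's limit cone theorem and the classification of Zariski closures. Even once one knows every leaf is an ellipse, deducing Fuchsian still requires the eigenvalue computation in Proposition~\ref{non-discrete-goal} together with Proposition~\ref{eigenvalues-that-look-fuchsian-implies-fuchsian}; there is no soft argument of the kind you describe. Your $(2)\Rightarrow(4)$ is also circular as written, since it presupposes $(2)\Rightarrow(1)$ to know the constant value is the ellipse.

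For the non-discrete subcase of $(4)$, your classification claim is false: for any $\alpha>1$ the domain $\{[x:y:1]:x>|y|^\alpha\}\cup\{[1:0:0]\}$ is properly convex, strictly convex, $C^1$, preserved by the hyperbolic one-parameter group generated by $\mathrm{diag}(\alpha,1,0)$ (trace-normalized), and is not an ellipse when $\alpha\neq 2$. The paper does not claim such a leaf is itself an ellipse; it shows instead that its boundary has a $C^2$ point of nonvanishing curvature, so the ellipse lies in its $\mathfrak{C}$-closure, and then invokes closedness of $\mathfrak{s}_\rho(\partial\Gamma)$ (Proposition~\ref{image-closed}) to find some \emph{other} leaf that is an ellipse. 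Finally, your $(3)\Rightarrow(2)$ is not an argument: pairwise topological indistinguishability in a non-$\mathrm{T}_0$ space does not collapse a countable set to a point, and ``the expectation is that'' is where the actual work lies. The paper's mechanism is again Proposition~\ref{key-reduction}: the countable image gives a countable decomposition of $\partial\Gamma$, Baire category produces a set with nonempty interior on which the equivalences extend continuously, and constancy follows from minimality.
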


Recall that a properly convex domain is \textit{divisible} if it admits a cocompact action by a discrete subgroup of $\text{SL}(3,\bbR)$. Condition (4) considerably limits the symmetries of leaves of non-Fuchsian $\rho$. It is in contrast to the observation that some leaves have symmetries: automorphism groups of leaves $\mathfrak{s}_\rho(\gamma^\pm)$ of fixed points $\gamma^\pm \in \partial \Gamma$ for $\gamma \in \Gamma - \{e\}$ contain $\mathbb{Z}$.

Though non-constancy of $\mathfrak{s}_\rho$ for non-Fuchsian $\rho$ may appear intuitive, it implies leaf maps exhibit a rather dramatic phenomenon, impossible for any map to a ${\rm T}_1$ space: \begin{theorem}\label{headline-theorem-wild-pathology}
    For non-Fuchsian {\rm{$\rho \in \text{Hit}_4(S)$}}, the leaf map $\mathfrak{s}_\rho: \partial \pi_1 S \to \mathfrak{C}$ is continuous, constant on $\pi_1S$ orbits, and not constant.
\end{theorem}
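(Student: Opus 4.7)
The three assertions are largely independent. Non-constancy of $\mathfrak{s}_\rho$ for non-Fuchsian $\rho$ is the implication $\neg(1)\Rightarrow\neg(2)$ of Theorem~\ref{theorem-fuchsian-locus-characterization}. Equivariance on $\pi_1 S$-orbits is immediate: $\text{dev}_\rho$ is $\rho$-equivariant and $\overline{\mathcal{F}}$ is $\pi_1 S$-invariant, so $\gamma \cdot \mathcal{F}_x = \mathcal{F}_{\gamma x}$ and $\text{dev}_\rho(\mathcal{F}_{\gamma x}) = \rho(\gamma)\cdot \text{dev}_\rho(\mathcal{F}_x)$, which is projectively equivalent and yields the same class in $\mathfrak{C}$.

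The substantive point is continuity. The plan is to construct continuous local lifts $\tilde{\mathfrak{s}}: V \to \mathcal{D}$ of $\mathfrak{s}_\rho|_V$ to the space $\mathcal{D}$ of properly convex domains in $\RP^2$ with the Hausdorff topology, from which continuity of $\mathfrak{s}_\rho$ into the quotient $\mathfrak{C} = \mathcal{D}/\text{PGL}(3,\bbR)$ follows. Fix $x \in \partial\pi_1 S$ and $p \in \mathcal{F}_x$. The local product structure of $\overline{\mathcal{F}}$ on $T^1\widetilde S$ supplies a continuous section $y \mapsto p_y \in \mathcal{F}_y$ on a neighborhood $V$ of $x$ with $p_x = p$. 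The projective $2$-plane $\Pi_y \subset \RP^3$ containing $\text{dev}_\rho(\mathcal{F}_y)$ depends continuously on $y$ (continuity of leaf tangent planes combined with smoothness of $\text{dev}_\rho$), so one can pick $g_y \in \text{PGL}(4,\bbR)$ continuously with $g_x = \mathrm{id}$ and $g_y \Pi_y = \Pi_x$. Setting $\tilde{\mathfrak{s}}(y) := g_y \cdot \text{dev}_\rho(\mathcal{F}_y) \subset \Pi_x$ gives the candidate lift, representing $\mathfrak{s}_\rho(y)$ in $\mathfrak{C}$.

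Hausdorff continuity of $\tilde{\mathfrak{s}}$ at $x$ is the hard part. The inner direction $\text{dev}_\rho(\mathcal{F}_x) \subset \liminf_{y\to x} \tilde{\mathfrak{s}}(y)$ follows from $\text{dev}_\rho$ being a local diffeomorphism by lifting paths from $p$ to any target $q \in \text{dev}_\rho(\mathcal{F}_x)$ through leaves near $\mathcal{F}_x$. The outer direction $\limsup_{y\to x} \tilde{\mathfrak{s}}(y) \subset \overline{\text{dev}_\rho(\mathcal{F}_x)}$ is the main obstacle: one must rule out that $\tilde{\mathfrak{s}}(y)$ protrudes past $\overline{\text{dev}_\rho(\mathcal{F}_x)}$ in the limit. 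For this I would appeal to the Hitchin boundary geometry -- under the properly convex foliated structure of Guichard-Wienhard, $\partial \overline{\text{dev}_\rho(\mathcal{F}_y)}$ is identified with $(\partial\pi_1 S) \setminus \{y\}$ via the continuous Frenet flag curve of $\rho$, so continuity of the Frenet data transfers to Hausdorff continuity of leaf boundaries and hence of their closures. This is the step where the specific convex foliated/Hitchin geometry of $\rho$, rather than generic projective-structure arguments, is essential.
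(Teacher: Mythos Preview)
Your proposal is correct and follows essentially the same approach as the paper: non-constancy is Theorem~\ref{theorem-fuchsian-locus-characterization}, orbit-constancy is equivariance of $\text{dev}$, and continuity ultimately rests on the Fren\'et curve parametrizing $\partial\,\text{dev}(x)$ continuously in $x$. The paper packages the continuity step more directly (Lemma~\ref{lemma-leaf-maps-basics}): it fixes a \emph{global} normalization $N_{t\to t_0}:\xi^3(t)\to\xi^3(t_0)$ via four continuously varying points in general position, then observes that $\partial C_t$ is parametrized by the continuous map $(t,y)\mapsto N_{t\to t_0}(\xi^2(y)\cap\xi^3(t))$, so $\overline{C_t}$ varies Hausdorff-continuously without any separate $\liminf/\limsup$ analysis or local $\text{PGL}(4,\bbR)$ sections.
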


Note in the above theorem that all $\pi_1S$ orbits in $\partial \pi_1 S$ are dense.

Benoist has proved that $\mathfrak{s}_\rho$ has closed image in $\mathfrak{C}$ (in unpublished work; see \S \ref{section-thanks-Yves} for details). From the continuity of $\mathfrak{s}_\rho$ and the minimality of the action of $\Gamma$ on $\partial \Gamma$, it follows that the image $\mathfrak{s}_\rho(\partial \Gamma)$ is a \textit{minimal} closed set in $\mathfrak{C}$, in the sense that it is closed and contains no proper nonempty closed subset. By taking cones over leaves of non-Fuchsian $\text{PSL}(4,\bbR)$ Hitchin representations, non-point minimal closed subsets of $\mathfrak{C}(\RP^n)$ can be constructed for all $n \geq 2$ (\S \ref{subsubsection-putting-it-together}).

All prior examples of minimal closed sets in $\mathfrak{C}(\RP^n)$, such as divisible domains \cite{benzecri1960thesis}, are points. So our results imply:

\begin{theorem}\label{thm-solves-benzecri}
    For all $n \geq 2$, $\mathfrak{C}(\RP^n)$ contains minimal closed sets that are not points.
\end{theorem}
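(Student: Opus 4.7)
The plan is to combine \cref{headline-theorem-wild-pathology} with Benoist's unpublished closedness result to produce a minimal closed subset of $\mathfrak{C}(\RP^2)$ that is not a point, and then transport the example up to $\RP^n$ via an iterated projective-cone construction.

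First I treat $n=2$. Fix any non-Fuchsian $\rho \in \text{Hit}_4(S)$ and set $K := \mathfrak{s}_\rho(\partial \pi_1 S) \subset \mathfrak{C}$. By \cref{headline-theorem-wild-pathology}, $\mathfrak{s}_\rho$ is continuous, constant on $\pi_1 S$-orbits, and non-constant, so $K$ is not a point. Benoist's theorem (\S\ref{section-thanks-Yves}) provides the nontrivial fact that $K$ is closed in $\mathfrak{C}$. To verify minimality, let $C \subset K$ be any nonempty closed subset: by continuity, $\mathfrak{s}_\rho^{-1}(C)$ is closed in $\partial \pi_1 S$; it is nonempty since $C \subset K$; and it is $\pi_1 S$-invariant because $\mathfrak{s}_\rho$ is. Since the $\pi_1 S$-action on $\partial \pi_1 S$ is topologically minimal (every orbit is dense), $\mathfrak{s}_\rho^{-1}(C) = \partial \pi_1 S$, and hence $C = K$.

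For $n \geq 3$ I would transport this example via the projective cone construction. Fix an inclusion $\RP^2 \hookrightarrow \RP^n$ and points $p_3, \dots, p_n$ in sufficiently general position, and define $c(\Omega) \subset \RP^n$ by coning $\Omega$ with these apices one dimension at a time. The cone over a properly convex domain by a point outside its projective span is again properly convex, and any two choices of inclusion and apices are conjugate by an element of $\text{PGL}(n+1,\bbR)$, so $c$ descends to a well-defined map $c : \mathfrak{C}(\RP^2) \to \mathfrak{C}(\RP^n)$. Working in affine charts and analyzing Hausdorff limits of pointed normalizations (\emph{à la} Benz\'ecri), one verifies $c$ is continuous and injective. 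The image $c(K)$ is then a non-point subset of $\mathfrak{C}(\RP^n)$, and the orbit-minimality argument above, applied to $c \circ \mathfrak{s}_\rho$, shows $c(K)$ contains no proper nonempty closed subset.

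The main obstacle is showing that $c(K)$ is itself \emph{closed} in $\mathfrak{C}(\RP^n)$. Because $\mathfrak{C}(\RP^n)$ is not ${\rm T}_0$, closedness does not follow from continuity of $c$ and compactness of $K$: a sequence of cones $c(\Omega_k)$ could in principle degenerate projectively to a properly convex domain in $\RP^n$ that is not itself a cone over any limit of the $\Omega_k$. Resolving this appears to require either a direct replay of Benoist's closedness argument tailored to cones over leaf maps, or a lemma asserting that if cones with apex $p$ over a precompact family $\{\Omega_k\} \subset \mathfrak{C}(\RP^2)$ converge in $\mathfrak{C}(\RP^n)$, then the limit is again such a cone. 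I expect the right route is the former: leaves of Hitchin representations enjoy strong boundary regularity and uniform convex-geometric control, coning preserves these in a transparent way, and Benoist's argument should apply essentially verbatim to $c \circ \mathfrak{s}_\rho$.
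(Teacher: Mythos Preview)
Your $n=2$ argument is correct and matches the paper's, with one cosmetic difference: the paper deduces minimality directly from the stronger statement in Proposition~\ref{image-closed} that $\mathfrak{s}_\rho(\partial\Gamma)$ equals the $\mathfrak{C}$-closure of \emph{any} of its points, rather than via the $\pi_1 S$-minimality argument you give. Both are fine.

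For $n\geq 3$ you have correctly identified the construction (iterated coning) and the genuine obstacle (closedness of $c(K)$ in a non-${\rm T}_0$ space), but your proposed resolution is more laborious than necessary and is left unverified. The paper does not replay Benoist's argument for $c\circ\mathfrak{s}_\rho$; instead it cites a classical result of Benz\'ecri (\cite{benzecri1960thesis}, \S V.3, Proposition~4) which characterizes the $\mathfrak{C}(\RP^{n+1})$-closure of a cone $\Omega'$ in terms of the $\mathfrak{C}(\RP^n)$-closure of its base $\Omega$. This single citation handles closedness, non-triviality, and minimality of the coned set simultaneously: if $\mathrm{Cl}_{\mathfrak{C}(\RP^n)}\{[\Omega]\}$ is a non-point minimal closed set, then so is $\mathrm{Cl}_{\mathfrak{C}(\RP^{n+1})}\{[\Omega']\}$. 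You should invoke this rather than attempting to rerun the pointed-domain compactness argument in the coned setting; your speculation that Benoist's proof ``should apply essentially verbatim'' is plausible but not obviously true, since the cocompactness input $\rho(\Gamma)K=\Omega_\rho$ has no immediate analogue for the cones in $\RP^n$.
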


Benz\'ecri concludes his seminal thesis, in which his namesake compactness theorem is proved and the topology of $\mathfrak{C}$ is first seriously studied, with a few questions on $\mathfrak{C}(\RP^n)$ for $n \geq 2$ (\cite{benzecri1960thesis} \S V.3). The first was whether all minimal closed subsets of $\mathfrak{C}(\RP^n)$ are points.

Among the experts aware of $\mathfrak{s}_\rho$ having closed image, Theorems \ref{theorem-fuchsian-locus-characterization}-\ref{thm-solves-benzecri} were expected to be true. However, no proof that $\mathfrak{s}_\rho$ is non-constant for non-Fuchsian $\rho$ had been found. This ends up being the main difficulty, and presents technical challenges. Our proof uses a range of methods, for instance relying on the Baire category theorem, the classification of Zariski closures of Hitchin representations, and Benoist's limit cone theorem.

\subsubsection{Shapes of Leaves} Our results place further restrictions on the geometry of individual leaves of non-Fuchsian properly convex foliated projective structures, which we explain here. First, they prevent any boundary point of a leaf from being too regular without being very flat.

\begin{corollary}\label{cor-arbitrary-boundary-points}
    Let $\rho$ be non-Fuchsian and $x \in \partial \pi_1 S$. Then the leaf $\mathfrak{s}_\rho(x)$ has no $C^2$ boundary point of nonvanishing curvature. 
\end{corollary}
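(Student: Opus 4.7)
The plan is to derive a contradiction from Theorem~\ref{theorem-fuchsian-locus-characterization}. Assume, to the contrary, that $\Omega := \mathfrak{s}_\rho(x_0)$ has a $C^2$ boundary point $p$ of nonvanishing curvature.

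The first step is a classical Benz\'ecri-type blow-up at $p$. In affine coordinates where $p = 0$ and the tangent line to $\partial \Omega$ at $p$ is $\{y = 0\}$, the boundary of $\Omega$ is locally $y = a x^2 + o(x^2)$ with $a \neq 0$ and $\Omega$ on the concave side. The rescalings $g_n(x,y) = (nx, n^2 y)$ lie in $\text{PGL}(3,\bbR)$ and fix the osculating parabola, and a direct expansion of the $o(x^2)$ term shows that $g_n \Omega$ converges in the Hausdorff topology on $\RP^2$ to the properly convex region bounded by $\{y = ax^2\}$, which is projectively equivalent to an ellipse $E$.

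Next I would transport this convergence to $\mathfrak{C}$. Since $[g_n \Omega] = [\Omega]$ for all $n$, and open sets in $\mathfrak{C}$ are exactly $\text{PGL}(3,\bbR)$-invariant open sets in the space of properly convex domains, the convergence $g_n \Omega \to E$ yields $[E] \in \overline{\{[\Omega]\}}$ in $\mathfrak{C}$. Benoist's closedness theorem (\S\ref{section-thanks-Yves}) gives that $\mathfrak{s}_\rho(\partial \pi_1 S)$ is closed in $\mathfrak{C}$; since $[\Omega] \in \mathfrak{s}_\rho(\partial \pi_1 S)$, this forces $[E] \in \mathfrak{s}_\rho(\partial \pi_1 S)$, so some leaf is projectively an ellipse.

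The ellipse is divisible (e.g., by any cocompact Fuchsian subgroup of $\text{PO}(2,1) \subset \text{PGL}(3,\bbR)$), so condition~(4) of Theorem~\ref{theorem-fuchsian-locus-characterization} holds, forcing $\rho$ to be Fuchsian and contradicting the hypothesis. The only nonroutine step is verifying global convergence in the blow-up: local convergence near $p$ is immediate from the $C^2$ expansion, and global convergence on $\RP^2$ then follows because points strictly inside the limiting parabolic region eventually lie in $g_n \Omega$ while those strictly outside eventually do not. The passage to $\mathfrak{C}$ and the final application of Theorem~\ref{theorem-fuchsian-locus-characterization} are immediate.
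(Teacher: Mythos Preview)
Your proof is correct and follows essentially the same route as the paper's. The paper invokes the Benz\'ecri blow-up as a standard fact (citing Goldman), then uses closedness of $\mathfrak{s}_\rho(\partial\Gamma)$ in $\mathfrak{C}$ to produce an elliptic leaf and concludes via Theorem~\ref{theorem-fuchsian-locus-characterization}; you carry out the parabolic rescaling explicitly but otherwise match the argument step for step.
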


This is analogous to a classical result of Benz\'ecri for divisible domains. It is notable in that it constrains arbitrary boundary points. This is in contrast to the constraints accessible with standard methods to study boundary regularity of similar objects, which control the worst-behaved points (e.g. \cite{guichard2005regularity}  \cite{potrie2017eigenvalues-entropy} \cite{zhangZimmer2017regularity}).

Pairing Theorem \ref{theorem-fuchsian-locus-characterization} with the closedness of the collection of all leaves in $\mathfrak{C}$ results in constraints on how complicated and asymmetric the boundary behavior a leaf may be. For instance, Benz\'ecri showed in \cite{benzecri1960thesis} that there are dense one-point sets in $\mathfrak{C}$. The following implies that any such domain cannot occur as a leaf.

\begin{corollary}\label{cor-not-awful}
     If $\rho \in {\rm {Hit}}_4(S)$ is non-Fuchsian and $x \in \partial \Gamma$, then ${\rm{Cl}}_{\mathfrak{C}}\{\mathfrak{s}_\rho(x)\}$ contains no closed point.
\end{corollary}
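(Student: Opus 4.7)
The plan is a short argument by contradiction, combining Theorem \ref{theorem-fuchsian-locus-characterization} with the closedness of $\mathfrak{s}_\rho(\partial \Gamma)$ in $\mathfrak{C}$ (due to Benoist, as recalled in the introduction). Suppose, toward contradiction, that $\rho$ is non-Fuchsian and that $p \in {\rm Cl}_\mathfrak{C}\{\mathfrak{s}_\rho(x)\}$ is a closed point of $\mathfrak{C}$. The goal is to show that $p$ is itself a leaf of $\mathfrak{s}_\rho$; clause (4) of Theorem \ref{theorem-fuchsian-locus-characterization} will then force $\rho$ to be Fuchsian.

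For this identification, I would use that $\mathfrak{s}_\rho(\partial \Gamma)$ is closed in $\mathfrak{C}$. Since it contains $\mathfrak{s}_\rho(x)$, it contains the closure ${\rm Cl}_\mathfrak{C}\{\mathfrak{s}_\rho(x)\}$, and hence $p$. So $p = \mathfrak{s}_\rho(y)$ for some $y \in \partial \Gamma$, and this leaf is a closed point of $\mathfrak{C}$. Clause (4) of Theorem \ref{theorem-fuchsian-locus-characterization} then yields $\rho$ Fuchsian, contradicting the hypothesis.

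There is no serious obstacle here: the corollary is essentially a repackaging, via Benoist's closed-image theorem, of the no-closed-point-leaf case of Theorem \ref{theorem-fuchsian-locus-characterization}(4). Its interest lies in the point-set subtlety of $\mathfrak{C}$: because $\mathfrak{C}$ contains dense singletons (Benz\'ecri), the non-Hausdorff closure of a single leaf could a priori sprawl across many asymmetric closed points, and the above argument rules this out.
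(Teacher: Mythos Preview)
Your proof is correct and is essentially the paper's own argument: the paper states that the proof of Corollary~\ref{cor-not-awful} is similar to that of Corollary~\ref{cor-arbitrary-boundary-points}, which likewise uses the closedness of $\mathfrak{s}_\rho(\partial\Gamma)$ (Proposition~\ref{image-closed}) to place the offending point among the leaves and then invokes Theorem~\ref{theorem-fuchsian-locus-characterization}(4). The only minor difference is that Proposition~\ref{image-closed} actually gives the stronger identity $\mathfrak{s}_\rho(\partial\Gamma)={\rm Cl}_{\mathfrak{C}}\{\mathfrak{s}_\rho(x)\}$, so the inclusion you use is in fact an equality.
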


In the remainder of the introduction we outline our proof and situate our results in the context of broader projects in higher Teichm\"uller theory.

\subsection{Outline of Proof of Thm. \ref{theorem-fuchsian-locus-characterization}.}  A rough outline of our proof is that after addressing regularity of varying projective equivalences with a Baire category argument, considerations of boundary regularity and convexity of leaves $\mathfrak{s}_\rho(x)$ constrain the eigenvalues of $\rho$ when $\mathfrak{s}_\rho(x)$ is constant. These constraints, when paired with deep work of Benoist on limit cones \cite{benoist1997proprietes} and the classification of Zariski closures of Hitchin representations \cite{sambarino2020infinitesimal} allow us to deduce Theorem \ref{theorem-fuchsian-locus-characterization}.

It proves useful to do case analysis on the size of the projective automorphism group of $\mathfrak{s}_\rho(x)$. The most involved case is when $\mathfrak{s}_\rho(x)$ has discrete automorphism group. This case is ill-suited to productive use of Benz\'ecri's compactness theorem, and is a place where we must contend with the non-separation of $\mathfrak{C}$. This appears in the form that there are discontinuous paths $A_t : [0,1] \to \text{SL}(3,\bbR)$ and domains $\Omega$ in $\mathbb{RP}^2$ so that $A_t \overline{ \Omega}$ is continuous in the Hausdorff topology.\footnote{The easy-to-deal-with example of this is to use projective automorphisms of $\Omega$. We must also contend with e.g. the possibility that for a divergent sequence $A_t \in \text{SL}(3,\bbR)$ the domains $A_t \overline{\Omega}$ converge to $\overline{\Omega}$.}

Our argument in this case to obtain constraints on eigenvalues of $\rho$ if $\mathfrak{s}_\rho$ is constant has two main parts. The first hinges on the Baire category theorem, and shows that the above pathology may be avoided on a nonempty open subset $U \subset \partial \Gamma$ in the sense that we may arrange for representatives of the equivalence classes $\mathfrak{s}_\rho(x)$ to vary by a continuous family of projective equivalences on $U$. This facilitates a ``sliding'' argument that places constraints on the boundary regularity of leaves $\mathfrak{s}_\rho(x)$, through comparison with the dynamics of the action of $\rho(\gamma)$ on the boundary of the domain.

The restrictions we obtain are equivalent to the logarithms of the eigenvalues of $\rho(\gamma)$ $(\gamma \in \Gamma)$ satisfying an explicit $\gamma$-independent homogeneous polynomial. The endgame of our proof is to show that the only way this constraint may be satisfied is if $\rho$ is $4$-Fuchsian. We use two substantial results here, namely Guichard's classification of Zariski closures of Hitchin representations (see \cite{sambarino2020infinitesimal}) and a deep theorem of Benoist \cite{benoist1997proprietes} on limit cones of Zariski-dense representations.

\subsection{Context and Related Results}

\subsubsection{Properly Convex Projective Structures}\label{history-projective-structures}

Some notable analogues to Theorems \ref{theorem-fuchsian-locus-characterization} and \ref{headline-theorem-wild-pathology} occur in the study of properly convex projective structures on surfaces. These structures parameterize $\text{SL}(3,\mathbb{R})$ Hitchin components (\cite{goldman1990convex}, \cite{choi1993convex}).

Briefly, a projective structure $(\text{dev}, \text{hol})$ on $S$ is said to be \textit{properly convex} if $\text{dev}$ is a homeomorphism of $\widetilde{S}$ onto a properly convex domain $\Omega$ of $\mathbb{RP}^2$. In this case, $\Gamma$ acts properly discontinuously and without fixed-points on $\Omega$ through $\text{hol}$.

A similar statement to Theorem \ref{headline-theorem-wild-pathology} that is much easier to prove is the observation that in the above notation, $\partial \Omega$ is topologically a circle and the map $\text{reg}: \partial \Omega \to (1,2]$ associating to $x \in \partial \Omega$ the regularity of $\partial \Omega$ at $x$ (see e.g. \S\ref{background-section-convex-domains}) is a $\Gamma$-invariant map that is constant on all orbits of $\Gamma$, and only constant if $\text{hol}$ is in the Fuchsian locus of $\text{Hit}_3(S)$.

Of course this is an imperfect analogue to Theorem \ref{headline-theorem-wild-pathology} since the target, $(1,2]$, of $\text{reg}$ is much better-separated than $\mathfrak{C}$, and there is no aspect of continuity present. Nevertheless, there is a theme here that the local projective geometry of domains of discontinuity for non-Fuchsian $\text{PSL}(n,\mathbb{R})$ Hitchin representations is quite complicated (c.f. also \cite{pozzettiSambarino2022lipschitz}).

The geometry of properly convex projective structures is well-studied, and much of the structure in this setting (e.g. \cite{benoist2004convexesI} \cite{guichard2005regularity}) is due to the presence of divisibility. It is not clear to what extent the geometry of leaves $\mathfrak{s}_\rho(x)$ is similar. One expects similarities due to the closedness of the image of $\mathfrak{s}_\rho$.

\subsubsection{Geometric Structures and Hitchin Representations}

For all split real forms $G$ of complex simple centerless Lie groups, the $G$-Hitchin components are parametrized by holonomies of connected components of spaces of geometric structures on manifolds $M_G$ associated to $S$ \cite{guichard2012anosov}. Understanding the qualitative geometry of these geometric structures is a program within higher rank Teichm\"uller theory, into which this work falls. The basic question of the topological type of $M_G$ has seen major recent progress in cases of special interest in \cite{alessandriniDavaloLi2021projective} and more generally in \cite{alessandrini2023fiber} and \cite{davalo2023nearly}. There is no qualitative characterization of these connected components of geometric structures currently known in general.

In fact, the only Lie group $G$ as above of rank at least $3$ where $M_G$ is known and the geometric structures corresponding to Hitchin representations are qualitatively characterized is $\text{PSL}(4, \mathbb{R})$. Since the analytic tools that are often used to study these geometric structures in low rank (e.g. \cite{collierToulisseTholozan2019geometry}) break down in rank $3$ \cite{sagman2022unstable}, the $\text{PSL}(4, \mathbb{R})$ Hitchin component is a natural candidate for study in developing expectations for the general geometry of Hitchin representations. 

\subsubsection{The Mapping Class Group Action on Hitchin Components}

A long-standing question in higher Teichm\"uller theory is to understand the structure of the action of the mapping class group $\text{Mod}(S)$ on Hitchin components. A conjecture that would have settled this question was due to Labourie \cite{labourie2008cross}. Labourie's conjecture holds for Hitchin components for Lie groups $G$ as above of rank $2$ \cite{labourie2017cyclic}, and was disproved in rank at least $3$ as the culmination of a series of papers by Markovi\'c, Sagman, and Smillie \cite{markovic2022non} \cite{markovic2022unstable} \cite{sagman2022unstable}.

However, the negative resolution to Labourie's conjecture does not appear to directly yield information about the $\text{Mod}(S)$ action on Hitchin components, and leaves open what we shall call the fibration conjecture (\cite{wienhard2018Invitation}, Conjecture 14). To state the fibration conjecture, let $\mathcal{Q}^k(S)$ denote the holomorphic bundle over Teichm\"uller space of holomorphic $k$-adic differentials (see e.g. \cite{bers1961holomorphic}).

\begin{question}[Fibration Conjecture]
    Is the {\rm{$\text{PSL}(n,\mathbb{R})$}} Hitchin component naturally {\rm{$\text{Mod}(S)$}}-equivariantly diffeomorphic to the bundle sum $\bigoplus_{k=3}^n \mathcal{Q}^k(S)$?
\end{question}

Work of the author \cite{nolte2022canonical} implies that a conjecture of Fock and Thomas on higher degree complex structures \cite{fock2021higher} is equivalent to the fibration conjecture. The connection of the fibration conjecture to this paper is through its prediction that there should be canonical projections $\text{Hit}_n(S) \to \text{Hit}_k(S)$ for $2 \leq k < n$. The only known such projections have $k=2$ (e.g. \cite{labourie2007flat}, \cite{loftin2001affine}, \cite{sapirHensel2021projection}).

In their paper \cite{guichard2008convex} introducing properly convex foliated projective structures, Guichard and Wienhard suggest that perhaps these geometric objects could be used to approach the fibration conjecture for $\text{PSL}(4,\mathbb{R})$. The question that motivated the investigations leading to this paper was if examining the leaves of properly convex foliated projective structures gave rise to a projection $\text{Hit}_4(S) \to \text{Hit}_3(S)$. This would have been evidence in favor of the Fock-Thomas and fibration conjectures.

More specifically, properly convex subsets of $\mathbb{RP}^2$ are the setting of the geometric structures corresponding to the $\text{SL}(3, \mathbb{R})$ Hitchin component, and also appear as leaves of properly convex foliated projective structures. One might hope, after noticing that $\mathfrak{s}_\rho$ is continuous and constant on $\Gamma$-orbits that $\mathfrak{s}_\rho$ was constant, $\mathfrak{s}_\rho(x)$ was divisible, and examining the action of $\rho \in \text{Hit}_4(S)$ on the value of $\mathfrak{s}_\rho(x)$ gave an element of $\text{Hit}_3(S)$. Theorem \ref{theorem-fuchsian-locus-characterization} shows that this hope fails.
  \vspace{0.25cm}

\par \noindent \textbf{Organization.} Following the introduction are two sections on background: \S \ref{background-section-convex-domains} on convex domains in $\mathbb{RP}^2$ and \S \ref{convex-projective-structures-background} on Hitchin representations and properly convex foliated projective structures. In \S \ref{section-proof-of-main-theorem} we prove Theorems \ref{theorem-fuchsian-locus-characterization}-\ref{thm-solves-benzecri} and present a proof, following Benoist and printed here with his permission, that $\mathfrak{s}_\rho(\partial \Gamma)$ is closed in $\mathfrak{C}$.

\vspace{0.25cm}

\par \noindent \textbf{Acknowledgements.} This paper would not have been written were it not for the reading group on Anosov representations at Rice University in $2021$ and Rice's RTG geometry-topology seminar. Among the participants of these, I would like to specifically thank Chris Leininger, Mike Wolf, Alan Reid, and Sara Edelman-Mu\~noz.

This paper has benefitted a great deal from conversations with various mathematicians, in particular with Olivier Guichard, Max Riestenberg, Jean-Philippe Burelle, Colin Davalo, and Teddy Weisman. I would like to thank Yves Benoist for sharing his proof that leaf maps have closed image, and his permission to print it here. It is my pleasure to further thank Mike Wolf for his support and guidance.

This material is based upon work supported by the National Science Foundation under Grant No. 1842494 and Grant No. 2005551.

\section{Properly Convex Domains in $\mathbb{RP}^2$}\label{background-section-convex-domains}

In this section we recall the foundational facts about properly convex subsets of $\mathbb{RP}^2$ that are essential to our later arguments. In particular, \S\ref{subsection-spaces-of-convex-sets} discusses spaces of properly convex domains and Benz\'ecri's compactness theorem, and \S\ref{benoist-lemma-statement} concerns a boundary regularity and convexity fact due to Benoist.

We begin by introducing definitions and notation. A set $\Omega \subset \mathbb{RP}^2$ is \textit{convex} if for any pair of points $p,q \in \Omega$ there is a line segment contained in $\Omega$ between $p$ and $q$. A \textit{domain} is an open connected subset of $\mathbb{RP}^2$. A convex domain $\Omega$ is said to be \textit{properly convex} if $\overline{\Omega}$ is contained in a single affine chart, and is said to be \textit{strictly convex} if for every $p,q \in \overline{\Omega}$, a line segment connecting $p$ and $q$ in $\overline{\Omega}$ can be taken to be contained in $\Omega$ except at its endpoints.

\subsection{Spaces of Properly Convex Sets}\label{subsection-spaces-of-convex-sets}

Let $\mathcal{C}$ denote the collection of properly convex domains in $\mathbb{RP}^2$. Let $\mathcal{C}^*$ denote the collection of pointed properly convex domains in $\mathbb{RP}^2$, that is, pairs $(\Omega, p)$ where $\Omega \in \mathcal{C}$ and $p \in \Omega$. We give $\mathcal{C}$ the topology induced by the Hausdorff topology on closures, and $\mathcal{C}^*$ the topology induced from the product $\mathcal{C} \times \mathbb{RP}^2$.  We denote the quotients of $\mathcal{C}$ and $\mathcal{C}^*$ by the action of $\text{SL}(3, \mathbb{R})$ by $\mathfrak{C}$ and $\mathfrak{C}^*$, respectively.

The topology of $\mathfrak{C}$ only separates some points---one-point sets in $\mathfrak{C}$ need not be closed. This phenomenon plays a prominent role in this paper. A first example of non-closed points in $\mathfrak{C}$ is as follows.

\begin{example}
    Let $e_1, e_2, e_3$ be a basis for $\mathbb{R}^3$. Work in an affine chart containing $[e_1], [e_2],$ and $[e_3]$. Let $\Omega$ be a strictly convex domain contained in this affine chart preserved by {\rm{$A = \text{diag}(e^{\lambda}, e^{\eta}, e^{-\lambda - \eta})$}} for some $\lambda > \eta \geq 0$. For instance $\Omega$ may be an ellipse if $\eta = 0$.
    
    Let $\ell$ denote the line segment from $[e_1]$ to $[e_3]$ in this affine chart and $p \in \ell$. Let $\ell'$ denote the line determined by $[e_2]$ and $p$. Then $\ell'$ bisects $\Omega$. Let $\Omega'$ be the component of $\Omega$ containing $[e_3]$. Then $\Omega'$ is not projectively equivalent to $\Omega$ as its boundary contains a line segment, but $A^n \overline{\Omega'}$ converges to $\overline{\Omega}$ in the Hausdorff topology. So $[\Omega] \in \overline{ \{[\Omega'] \}}$.
\end{example}

The closures of points in $\mathfrak{C}$ vary a great deal: it is a consequence of Benz\'ecri's compactness theorem below that all divisible domains are closed points, while Benz\'ecri also showed (\cite{benzecri1960thesis} \S V.3, p.321) there there exist dense one-point sets in $\mathfrak{C}$. The topology of $\mathfrak{C}$ is quite complicated, and is rich enough that the continuity of a map with target $\mathfrak{C}$ has nontrivial content.

On the other hand, all of the poor separation in $\mathfrak{C}$ is caused by divergent sequences of elements of $\text{SL}(3, \mathbb{R})$ for the tautological reason that if $K \subset \text{SL}(3, \mathbb{R})$ is compact and $\Omega \in \mathcal{C}$, then the orbit of $\Omega$ under $K$ represents a single point in $\mathfrak{C}$. As a consequence, if one is able to gain finer control on a sequence $\Omega_n \in \mathcal{C}$ than convergence in $\mathcal{C}$, it can be tractable to understand the limiting projective geometry of $\Omega_n$ in spite of the non-separation of points in $\mathfrak{C}$.

The typical way this is done in practice, though which is not so useful in the following, is by gaining control over a single point of the domains $\Omega_n$ in question, working with the space $\mathfrak{C}^*$ instead of $\mathfrak{C}$. It follows from the below fundamental result of Benz\'ecri that this is enough to guarantee uniqueness of limits.

\begin{theorem}[Benz\'ecri Compactness]
 $\text{{\rm {SL}}}(3, \mathbb{R})$ acts properly and co-compactly on $\mathcal{C}^*$.
\end{theorem}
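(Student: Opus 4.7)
The plan is to exhibit an $\text{SL}(3,\mathbb{R})$-equivariant assignment $(\Omega, p) \mapsto E(\Omega, p)$ of an ellipse centered at $p$ and contained in $\Omega$, and then exploit that $\text{SL}(3,\mathbb{R})$ acts transitively on the space of such pointed ellipses with compact point stabilizer isomorphic to $\text{SO}(2)$. A natural candidate for $E(\Omega, p)$ is the inscribed ellipse of largest area centered at $p$: existence follows from compactness of the family of inscribed ellipses in a bounded convex set, uniqueness from strict concavity of the area functional, and equivariance from the canonical nature of the construction, provided it is phrased intrinsically (e.g.\ replacing ``area'' with a projective invariant such as a Hilbert-metric size).

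Assuming one also has a universal constant $c > 0$ with $\Omega \subset c \cdot E(\Omega, p)$, where $c \cdot E$ denotes dilation by factor $c$ about $p$, I would prove cocompactness by normalizing: use transitivity to send $(E(\Omega, p), p)$ to a fixed reference $(E_0, p_0)$, after which $\Omega$ lands in the set $K_0 = \{\Omega' \in \mathcal{C} : E_0 \subset \Omega' \subset c E_0\}$, which is compact by the Blaschke selection theorem. Then $K_0 \times \{p_0\}$ is a compact cross-section meeting every orbit. For properness, suppose $g_n (\Omega_n, p_n)$ and $(\Omega_n, p_n)$ both remain in a fixed compact subset of $\mathcal{C}^*$. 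Equivariance forces $g_n$ to carry the pointed ellipse $(E(\Omega_n, p_n), p_n)$ to $(E(g_n\Omega_n, g_n p_n), g_n p_n)$, with both lying in compact families of pointed ellipses. Since $\text{SL}(3,\mathbb{R})$ maps to the space of pointed ellipses as a principal $\text{SO}(2)$-bundle with compact fibers, $g_n$ is trapped in a compact subset of $\text{SL}(3,\mathbb{R})$, giving properness.

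The main obstacle is establishing the universal inclusion $\Omega \subset c E(\Omega, p)$. The classical John inequality gives such an inclusion with constant equal to the dimension when the inscribed ellipse is centered at the centroid of $\Omega$, but here $p$ is arbitrary and may lie arbitrarily close to $\partial \Omega$, which destroys affine-uniform control. I expect the right fix is to choose $E(\Omega, p)$ not as a maximal inscribed ellipse in the affine sense but as an object intrinsic to the Hilbert geometry of $(\Omega, p)$, such as the John ellipsoid of the Hilbert unit ball $B_\Omega(p,1) \subset \Omega$. Such an object is uniformly bilipschitz-close to an honest ellipse in Hilbert distance, and manifestly scales equivariantly under $\text{SL}(3,\mathbb{R})$ because the Hilbert metric is a projective invariant of $\Omega$; the uniform inclusion then reduces to a universal statement about the shape of Hilbert balls in properly convex domains.
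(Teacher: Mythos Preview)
The paper does not prove this theorem; it is stated as a foundational background result of Benz\'ecri and used as a black box, so there is no proof in the paper to compare against.

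Your overall strategy---construct an $\text{SL}(3,\mathbb{R})$-equivariant continuous map from $\mathcal{C}^*$ to the homogeneous space of pointed ellipses and read off properness and cocompactness from the fact that the target is $\text{SL}(3,\mathbb{R})$ modulo a compact subgroup---is the standard one, and your properness argument is correct once such a map is in hand. But your proposed construction has two genuine gaps. First, the John ellipsoid of $B_\Omega(p,1)$ is \emph{not} $\text{SL}(3,\mathbb{R})$-equivariant: the Hilbert ball is indeed a projective invariant of $(\Omega,p)$, but the John ellipsoid is the inscribed ellipse of maximal \emph{area}, and area depends on the choice of affine chart; computing it in two different charts containing $\overline{\Omega}$ yields two genuinely different ellipses, so your claim that the construction ``manifestly scales equivariantly'' is incorrect. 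Second, the outer bound $\Omega \subset c\cdot E(\Omega,p)$ fails for any ellipse inscribed in the Hilbert unit ball: take $\Omega$ a triangle and let $p$ approach a vertex; in any fixed affine chart $B_\Omega(p,1)$ (and hence every inscribed ellipse) shrinks to that vertex while $\Omega$ stays fixed, so no universal $c$ exists. The obstacle you correctly flagged---that $p$ may lie arbitrarily close to $\partial\Omega$---is not resolved by passing to the Hilbert ball, because the Hilbert ball is affinely small in exactly that regime. One correct repair replaces your ellipse by the one coming from Vinberg's characteristic function of the cone over $\Omega$: the Hessian of its logarithm defines a genuinely $\text{SL}(3,\mathbb{R})$-invariant Riemannian metric on $\Omega$, and the resulting tangent ellipse at $p$ is equivariant and satisfies the needed two-sided John-type estimate.
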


As an immediate corollary, we have:

\begin{corollary} $\mathfrak{C}^*$ is a compact Hausdorff space.
\end{corollary}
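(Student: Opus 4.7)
The plan is to deduce both assertions as routine consequences of Benz\'ecri's compactness theorem, using only standard facts about quotients by proper, cocompact group actions. There is essentially no obstacle here; the corollary is stated as ``immediate,'' and the work is to identify which standard lemmas get invoked.

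For compactness, I would argue directly. By hypothesis, $\text{SL}(3,\mathbb{R})$ acts cocompactly on $\mathcal{C}^*$, so there exists a compact subset $K \subset \mathcal{C}^*$ with $\text{SL}(3,\mathbb{R}) \cdot K = \mathcal{C}^*$. The quotient map $q : \mathcal{C}^* \to \mathfrak{C}^*$ is continuous and satisfies $q(K) = \mathfrak{C}^*$, so $\mathfrak{C}^*$ is the continuous image of a compact set, hence compact.

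For Hausdorffness, I would invoke the general principle that if a topological group $G$ acts properly on a Hausdorff space $X$, then $X/G$ is Hausdorff. The relevant input is that $\mathcal{C}^*$ is itself Hausdorff, which is inherited from the product $\mathcal{C} \times \mathbb{RP}^2$ (the Hausdorff metric on closures separates distinct properly convex domains). Since the orbit map $\text{SL}(3,\mathbb{R}) \times \mathcal{C}^* \to \mathcal{C}^* \times \mathcal{C}^*$ sending $(g,x) \mapsto (x,gx)$ is proper by hypothesis, it is a closed map, and its image, which is precisely the orbit equivalence relation $R \subset \mathcal{C}^* \times \mathcal{C}^*$, is therefore closed. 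Combined with the fact that $q$ is an open map (as is the case for any quotient by a group action), this implies $\mathfrak{C}^*$ is Hausdorff: given two distinct points $[x], [y] \in \mathfrak{C}^*$, the pair $(x,y)$ lies in the open set $(\mathcal{C}^* \times \mathcal{C}^*) \setminus R$; choosing a product open neighborhood $U \times V$ of $(x,y)$ inside this set and passing to images $q(U), q(V)$ yields disjoint open neighborhoods separating $[x]$ and $[y]$.

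The only step that requires any care is verifying closedness of $R$; if one prefers to avoid invoking the properness-implies-closed-orbit-relation lemma as a black box, the most transparent substitute is the characterization of properness via compact-to-compact returns: for any compact $L \subset \mathcal{C}^*$, the set $\{g \in \text{SL}(3,\mathbb{R}) : gL \cap L \neq \varnothing\}$ is compact. From this, a standard diagonal argument through a convergent sequence $(x_n, g_n x_n) \to (x,y)$ in $R$ extracts a convergent subsequence of $g_n$, placing $(x,y) \in R$ and giving closedness directly.
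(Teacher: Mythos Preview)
Your proposal is correct and supplies exactly the standard argument that the paper leaves implicit: the paper states the corollary as ``immediate'' from Benz\'ecri compactness and gives no further proof, so there is nothing to compare beyond noting that you have written out the expected routine deduction from properness and cocompactness.
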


\subsection{Regularity and Convexity of Domains}\label{benoist-lemma-statement}

In this subsection, we describe the notion of boundary behavior of convex sets best adapted to our uses and a relevant circumstance in which this quantity may be computed explicitly at a boundary point.

\begin{definition} Let $C$ be a closed embedded $C^1$ curve in $\mathbb{R}^2$. For $1 < \alpha < \infty$, we say that $p \in C$ is a {\rm{$\alpha$-modelled point}} of $C$ if there is an open neighborhood $U$ in $C$ of $p$ and constant $C_U > 0$ so that for all $y \in U$, $(1/C_U) d(y,x)^\alpha \leq d(y, T_xC) \leq C_U d(y,x)^\alpha$.
\end{definition}

Here, the distance is the standard Euclidean distance. Note that $\alpha$-modelling of a point $p \in C$ is invariant under projective transformations, and the curve $y = |x|^\alpha$ is $\alpha$-modelled $(\alpha > 1)$. For $\alpha \in (1,2)$, a point being $\alpha$-modelled implies it is a $C^\alpha$ point and not $C^{\alpha'}$ for any $\alpha' > \alpha$. For $\alpha > 2$, a point being $\alpha$-modelled implies it is an $\alpha$-convex point (see e.g. \cite{benoist2004convexesI}, \cite{guichard2005regularity}) and not $\alpha'$-convex for any $\alpha' < \alpha$.

The following lemma is essentially contained in work of Benoist (\cite{benoist2004convexesI}, proof of Corollaire 5.3). The form we use here is slightly stronger and more general than the version stated there, and follows from a close examination of the argument given in \cite{benoist2004convexesI}. 

\begin{lemma}[Models at Fixed Points]\label{regularity-lemma} Let $\Omega \subset \mathbb{RP}^2$ be a properly convex, strictly convex domain preserved by {\rm{$A\in \text{GL}(3, \mathbb{R})$}} conjugate to {\rm$\text{diag}(\lambda_1, \lambda_2, \lambda_3)$} with $\lambda_1 > \lambda_2 > \lambda_3 > 0$. Write $l_i = \log \lambda_i $ for $i = 1,2,3$ and let $x_{A^+}$ denote the attracting fixed point of $A$ in $\mathbb{RP}^2$.

Then $x_{A^+} \in \partial \Omega$ is $\alpha$-modelled for $$\alpha = \frac{l_1 - l_3}{l_1 - l_2}.$$
\end{lemma}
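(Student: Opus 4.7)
The plan is to pass to an affine chart around $x_{A^+} = [e_1]$ in which $A$ acts linearly as a diagonal map with eigenvalues $\mu = \lambda_2/\lambda_1$ and $\nu = \lambda_3/\lambda_1$, both in $(0,1)$ with $\nu < \mu$, and then to show that $\partial\Omega$ is pinched between two graphs $|z| = c_{\pm}|y|^\alpha$ near the origin. The exponent $\alpha$ will be forced by the scaling identity $\nu = \mu^\alpha$, which unwinds to $\alpha = \log\nu/\log\mu = (l_1-l_3)/(l_1-l_2)$; the job is to promote this one-dimensional scaling to a uniform pinching estimate using $A$-equivariance and compactness.

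First I would identify the tangent line $T_{x_{A^+}}\partial\Omega$. Strict convexity makes $\partial\Omega$ a $C^1$ curve with a unique supporting line at each boundary point, and $A$-invariance of $\partial\Omega$ forces this tangent to be one of the two $A$-invariant lines through $[e_1]$, namely the $e_1e_2$-line $\{z=0\}$ or the $e_1e_3$-line $\{y=0\}$. The second of these also passes through the other fixed point $[e_3] \in \partial\Omega$, and a tangent line to a strictly convex domain meets $\overline\Omega$ in a single point, so this case is ruled out. Hence $T_{[e_1]}\partial\Omega = \{z=0\}$, which matches the intuition that the tangent direction is the slowest contracting direction of $A$ at $[e_1]$.

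I would then pick a compact fundamental domain $K$ for $\langle A \rangle$ acting on one of the two arcs of $\partial\Omega \setminus \{[e_1],[e_3]\}$, and study the continuous function $F(p) = |z(p)|/|y(p)|^\alpha$ on $K$. The hard part, and the only place the hypotheses are used nontrivially beyond Step 1, is showing $F$ is bounded above and below on $K$: this needs both $y$ and $z$ to be nonvanishing on $K$, and that in turn comes from strict convexity ruling out line segments in $\partial\Omega$. The tangent line $\{z=0\}$ meets $\partial\Omega$ only at $[e_1]$, while $\{y=0\}$ is a projective line meeting $\overline\Omega$ in only $[e_1]$ and $[e_3]$; both of these points are excluded from $K$, so $F$ is continuous and positive on the compact set $K$, giving $0 < c \leq F \leq C < \infty$ there.

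Finally I would propagate to a neighborhood $U$ of $x_{A^+}$ in $\partial\Omega$ by equivariance: any $p \in \partial\Omega \cap U$ (in the arc chosen) has the form $A^n q$ for $q \in K$ and $n \geq 0$, and the choice $\alpha = \log\nu/\log\mu$ makes $\nu^n = \mu^{n\alpha}$, so
$$F(A^n q) = \frac{\nu^n |z(q)|}{\mu^{n\alpha} |y(q)|^\alpha} = F(q) \in [c,C].$$
The same argument handles the second arc, so $c|y|^\alpha \leq |z| \leq C|y|^\alpha$ on all of $U$. Translating to the definition, in this chart $d(p, T_{[e_1]}\partial\Omega) = |z(p)|$, while $d(p, [e_1]) = \sqrt{y^2 + z^2} = |y|\,(1+o(1))$ since $\alpha > 1$ forces $|z|/|y| \to 0$; so the uniform bound on $|z|$ in terms of $|y|^\alpha$ rewrites directly as the required two-sided estimate on $d(p, T_{[e_1]}\partial\Omega)$ in terms of $d(p,[e_1])^\alpha$, establishing that $[e_1]$ is $\alpha$-modelled.
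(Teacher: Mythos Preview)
Your argument is correct and follows essentially the same route as the paper's proof: work in the affine chart where $A$ acts by $(y,z)\mapsto(\mu y,\nu z)$ with $\mu=\lambda_2/\lambda_1$, $\nu=\lambda_3/\lambda_1$, take a compact fundamental domain for $\langle A\rangle$ on an arc of $\partial\Omega$, extract uniform bounds there, and propagate by equivariance; your packaging via the $A$-invariant ratio $F=|z|/|y|^\alpha$ (from $\nu=\mu^\alpha$) is a clean rephrasing of the paper's explicit estimates on $\log d(A^n p,x_{A^+})$ and $\log d(A^n p,y_{A^+})$.

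One small correction that does not affect the argument: strict convexity alone does \emph{not} make $\partial\Omega$ a $C^1$ curve. However, you do not actually need global $C^1$: the $A$-invariance of the (closed, connected) set of supporting lines at $[e_1]$, together with the North--South dynamics of $A$ on lines through $[e_1]$, already forces that set to be either $\{z=0\}$ alone or to contain $\{y=0\}$; and you correctly rule out the latter since $[e_3]\in\partial\Omega$ lies on it. So the supporting line at $[e_1]$ is unique and equals $\{z=0\}$, which is all the rest of your argument uses.
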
 

In consideration of the importance of this lemma to the present work and the standing assumption of divisibility in \cite{benoist2004convexesI} (which we will show is false in general in our setting) we present a proof of Lemma \ref{regularity-lemma} in the appendix.

\section{Properly Convex Foliated Projective Structures and Hitchin Representations}\label{convex-projective-structures-background}

In this section, we recall the relevant features of Hitchin representations and the theory of properly convex foliated projective structures developed by Guichard and Wienhard in \cite{guichard2008convex} to our later discussion. We also prove a few basic lemmata, and set conventions for later use. \S \ref{super-elementary-remarks} is the only portion of this section not contained in existing literature.

\begin{notation} Let $S$ be a closed, oriented surface of genus $g \geq 2$, $\Gamma = \pi_1(S),$ and $\overline{\Gamma} = \pi_1(T^1(S))$. Let $\mathcal{G}, \mathcal{F}$ denote the stable and semi-stable geodesic foliations of $T^1S$. Let $\overline{\mathcal{F}}, \overline{\mathcal{G}}$ denote the lifts of $\mathcal{F}, \mathcal{G}$ to $T^1 \widetilde{S}$, and $\widetilde{\mathcal{G}}, \widetilde{\mathcal{F}}$ the lifts of $\mathcal{F}, \mathcal{G}$ to the universal cover of $T^1S$.
\end{notation}

\subsection{Hitchin Representations and Hyperconvex Fr\'enet Curves}

Hitchin representations $\Gamma \to \text{PSL}(n,\mathbb{R})$ are characterized in terms of the geometry of special equivariant curves by work of Labourie and Guichard \cite{labourie2006anosov}, \cite{guichard2008composantes}. This perspective is central to our methods, and we recall it here.

For $1\leq k \leq n$, denote the $k$-Grassmannian of $\mathbb{R}^n$ by $\text{Gr}_k(\mathbb{R}^n)$. A continuous curve $\xi  = (\xi^1, ..., \xi^{n-1}): \partial \Gamma \to \bigoplus_{k=1}^{n-1} \text{Gr}_k(\mathbb{R}^n)$ is a \textit{hyperconvex Fren\'et curve} if:
\begin{enumerate}
    \item (Convexity) For any $k_1,...,k_j$ with $\sum_{l=1}^j k_l \leq n$, and distinct $x_1,..., x_j \in \partial \Gamma$, the vector space sum $\xi^{k_1}(x_1) + ... + \xi^{k_j}(x_j)$ is direct;
    \item (Osculation) For any $x \in \partial \Gamma$ and $k_1,...,k_j$ with $K = \sum_{l=1}^j k_l < n$ we have that $\xi^K(x) = \lim\limits_{m \to \infty} \left[ \xi^{k_1}(x_1^m) \oplus ... \oplus \xi^{k_j}(x_j^m) \right]$ for any sequence $(x_1^m,...,x_j^m)$ of $j$-ples of distinct points so that for all $l$, the sequence $x_l^m $ converges to $x$.
\end{enumerate}

A hyperconvex Fren\'et curve $(\xi^1, ..., \xi^{n-1})$ is entirely determined by $\xi^1$. The standard example of such a curve is the Veronese curve $\xi^1: \mathbb{RP}^1 \to \mathbb{RP}^n$, given in the model of $\mathbb{R}^k$ $(k = 2,n+1)$ as homogeneous polynomials on $\mathbb{R}^2$ of degree $k-1$ by $[f] \mapsto [f^n]$. The relevant result to us here, which serves as our working definition of a Hitchin representation, is:

\begin{theorem}[Labourie \cite{labourie2006anosov}, Guichard \cite{guichard2008composantes}] A representation {\rm{$\rho: \Gamma \to \text{PSL}(n,\bbR)$}} is Hitchin if and only if there exists a $\rho$-equivariant hyperconvex Fren\'et curve.
\end{theorem}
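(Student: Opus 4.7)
The plan is to prove each direction separately, exploiting the connectedness of $\text{Hit}_n(S)$ and the dynamical features of representations.

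For the forward direction, I would start at the Fuchsian locus: for $\rho_0 = \iota_n \circ \sigma$, where $\iota_n: \text{PSL}(2,\bbR) \to \text{PSL}(n,\bbR)$ is the irreducible representation and $\sigma: \Gamma \to \text{PSL}(2,\bbR)$ is Fuchsian, the Veronese curve together with its osculating flags produces a manifestly equivariant hyperconvex Fr\'enet curve (the convexity and osculation axioms can be verified by direct computation with homogeneous polynomials). To propagate this to all Hitchin representations, I would first establish that every Hitchin representation is projective Anosov with respect to the minimal parabolic of $\text{PSL}(n,\bbR)$. This supplies, for each $\rho \in \text{Hit}_n(S)$, continuous equivariant limit maps $\xi^k: \partial \Gamma \to \text{Gr}_k(\bbR^n)$ with transverse values at distinct points. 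I would then verify that the convexity and osculation axioms define a condition on a representation that is simultaneously open (transversality of boundary values is open) and closed (equicontinuity from Anosov estimates allows limits to be taken inside the conditions), and hence holds on the entire connected set $\text{Hit}_n(S)$ once it holds on the Fuchsian locus.

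For the reverse direction, I would first extract dynamics from a hyperconvex Fr\'enet curve $\xi$ for $\rho$. Applying the convexity axiom at the fixed points $\gamma^\pm \in \partial\Gamma$ of $\rho(\gamma)$ for $\gamma \in \Gamma - \{e\}$ gives $\bbR^n = \xi^1(\gamma^+) \oplus \xi^{n-1}(\gamma^-)$, and $\rho$-invariance of $\xi$ forces $\xi^1(\gamma^+)$ and $\xi^{n-1}(\gamma^-)$ to be the attracting line and repelling hyperplane of $\rho(\gamma)$; iterating this with the intermediate $\xi^k$ yields $n$ distinct positive eigenvalues, so $\rho(\gamma)$ is loxodromic. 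Combined with the irreducibility forced by the convexity axiom applied to enough tuples, this shows $\rho$ is discrete and faithful. To identify the component of the character variety containing $\rho$ with $\text{Hit}_n(S)$, I would verify that admitting an equivariant hyperconvex Fr\'enet curve is an open condition (by transversality) and a closed condition (by Arzel\`a--Ascoli together with the uniform hyperbolicity just established), so the set of such representations is a union of connected components. A deformation or invariant argument (for instance, tracking a cross-ratio sign or a positivity invariant of the curve) would then show this set is exactly the Hitchin component.

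The principal obstacle in the forward direction is establishing the projective Anosov property for all of $\text{Hit}_n(S)$ without presupposing the Fr\'enet curve: this requires genuinely dynamical input, such as flow-theoretic arguments on the unit tangent bundle, and is the main technical content of Labourie's side of the theorem. In the reverse direction, the main difficulty is ruling out the existence of hyperconvex Fr\'enet curves for representations in non-Hitchin components, which demands a suitable rigidity or component-counting argument to pin down which connected component of the character variety contains $\rho$.
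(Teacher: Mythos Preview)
The paper does not prove this theorem. It is quoted as background from the literature (Labourie \cite{labourie2006anosov} for the forward direction, Guichard \cite{guichard2008composantes} for the converse) and is explicitly adopted as the paper's \emph{working definition} of a Hitchin representation. There is therefore no proof in the paper against which to compare your proposal.

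Your sketch is a reasonable high-level summary of how the Labourie--Guichard results are actually proved, but be aware that both directions are substantial theorems whose full proofs occupy entire papers. In particular, the ``open and closed'' argument you outline for the forward direction is the correct architecture, but the closedness step (showing the hyperconvexity and osculation conditions do not degenerate along a path in $\text{Hit}_n(S)$) requires the full strength of Labourie's Anosov machinery and is not a routine compactness argument. For the converse, your identification of the component via a positivity invariant is also the right idea, but carrying it out is the main content of Guichard's paper. If you are writing this up for a course or expository purpose, it would be appropriate simply to cite the two sources as the present paper does.
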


We denote the $\text{PSL}(n,\bbR)$ Hitchin component(s) by $\text{Hit}_n(S)$. A fact that will be useful to us is that Hitchin representations $\rho: \Gamma \to \text{PSL}(n,\bbR)$ may always be lifted to $\text{SL}(n,\bbR)$.

Though the definition of a hyperconvex Fren\'et curve is stated in terms of sums of $\xi^k$, work of Guichard \cite{guichard2005dualite} shows that intersections of $\xi^k$ are also quite well-behaved, which is often the way in which we interact with the Fren\'et property.

\begin{proposition}[Guichard \cite{guichard2005dualite}]\label{Intersection-lemmas}
    Let $\xi = (\xi^1, ..., \xi^{n-1})$ be a hyperconvex Fren\'et curve. Then:
    \begin{enumerate}
        \item (General Position) If $n = \sum_{i=1}^j k_i$ and $x_1, ..., x_j \in \partial \Gamma$ are distinct, then $$\bigcap_{i=1}^j \xi^{n-k_i}(x_i) = \{0\};$$
        \item (Dual Osculation) For any $x \in \partial \Gamma$ and $k_1,...,k_j$ with $K = \sum_{l=1}^j k_l < n$ we have that for any sequence $(x_1^m,...,x_j^m)$ of $j$-ples of distinct points in $\partial \Gamma$ so that $x_l^m $ converges to $x$ for each $l$, $$\xi^{n-K}(x) = \lim\limits_{m \to \infty} \bigcap_{i=1}^j \xi^{k_i}(x_i^m).$$
    \end{enumerate}
\end{proposition}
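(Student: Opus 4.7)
The plan is to deduce both statements from the fact that $\xi$ admits a \emph{dual} hyperconvex Fren\'et curve $\xi^* = (\xi^{*,1},\ldots,\xi^{*,n-1})$ in the dual projective space, defined by
$$\xi^{*,k}(x) := \op{Ann}\bigl(\xi^{n-k}(x)\bigr) \in \op{Gr}_k\bigl((\bbR^n)^*\bigr).$$
Granting that $\xi^*$ is itself hyperconvex Fren\'et---which is the principal content of \cite{guichard2005dualite}---both conclusions are pure annihilator dualizations, using that the annihilator map is a homeomorphism on each Grassmannian which swaps sums with intersections.

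Concretely, convexity for $\xi^*$ says that for distinct $x_i$ with $\sum k_i \leq n$, the sum $\sum_i \xi^{*,k_i}(x_i)$ is direct, hence of dimension $\sum k_i$. Passing to annihilators gives
$$\Big(\sum_{i=1}^j \xi^{*,k_i}(x_i)\Big)^{\perp} = \bigcap_{i=1}^j \xi^{n-k_i}(x_i),$$
so the intersection has dimension $n - \sum k_i$. Specializing to $\sum k_i = n$ yields part (1). Osculation for $\xi^*$ dualizes analogously to
$$\xi^{n-K}(x) = \lim_{m \to \infty} \bigcap_{i=1}^j \xi^{n-k_i}(x_i^m),$$
which is the content of part (2) (read with $\xi^{n-k_i}$ in place of $\xi^{k_i}$ in the intersection, as forced by dimensional consistency).

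The substantive task is thus to establish that $\xi^*$ is hyperconvex Fren\'et. Its convexity is equivalent to the sharper identity $\dim \bigcap_i \xi^{n-k_i}(x_i) = n - \sum k_i$ for all configurations with $\sum k_i \leq n$, which I would prove by induction on $K = \sum k_i$. The base case $K = 0$ is trivial; the inductive step needs to show that incrementing some $k_j$ by one drops the intersection's dimension by exactly one. The easy inequality $\dim \leq n - K$ is just the general dimension bound for intersections, and what requires work is the reverse: showing that $\bigcap_{i \neq j}\xi^{n-k_i}(x_i) \cap \xi^{n-k_j+1}(x_j)$ is not wholly contained in $\xi^{n-k_j}(x_j)$. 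This transversality should follow by combining osculation of $\xi$ (which approximates $\xi^{n-k_j+1}(x_j)$ by direct sums decomposing off a line $\xi^1(y)$ for $y$ near $x_j$) with convexity of $\xi$ applied to the enlarged configuration that includes $y$. Once convexity of $\xi^*$ is in hand, osculation of $\xi^*$ is a routine continuity argument in the Grassmannian: the intersections have the correct dimension $n - K$ for each $m$, and osculation of $\xi$ pins down their limit as $\xi^{n-K}(x)$.

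The main obstacle is precisely this inductive transversality step---ruling out degenerate coincidences among osculating flags at distinct points---which is the technical heart of Guichard's duality paper and is where the full Fren\'et hyperconvexity of $\xi$ enters essentially.
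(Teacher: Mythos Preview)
The paper does not prove this proposition: it is stated as a result of Guichard and cited to \cite{guichard2005dualite} without argument. So there is no ``paper's own proof'' to compare against; the relevant benchmark is Guichard's original argument.

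Your approach is exactly the one Guichard takes: define the dual curve $\xi^{*,k} = \op{Ann}(\xi^{n-k})$, prove that $\xi^*$ is itself hyperconvex Fren\'et, and read off the intersection statements by dualizing the sum statements. Your dualization of convexity to obtain (1) is correct, and your identification of the substantive content---the inductive transversality step needed to establish convexity of $\xi^*$---is accurate. The sketch you give for that induction (approximate $\xi^{n-k_j+1}(x_j)$ via osculation as a sum involving a nearby $\xi^1(y)$, then invoke convexity of $\xi$ on the enlarged configuration) is the right shape of argument.

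You are also right to flag the apparent typo in part (2) as stated: the intersection $\bigcap_i \xi^{k_i}(x_i^m)$ does not have dimension $n-K$ in general, whereas $\bigcap_i \xi^{n-k_i}(x_i^m)$ does, and the latter is what dualizing osculation of $\xi^*$ actually yields. The paper's own applications of dual osculation (e.g.\ the continuity of $\xi^1_t(t') = \xi^3(t)\cap\xi^2(t')$ in dimension $4$) are consistent with the corrected form.
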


\subsection{Properly Convex Foliated Projective Structures}

In this subsection, we recall the properties of geodesic foliations on surfaces that make the definition of properly convex foliated projective structures on surfaces well-defined, state their definitions and basic properties, and collect the main results of Guichard and Wienhard in \cite{guichard2008convex}. Our notation and the content here follows \cite{guichard2008convex}.

\subsubsection{Geodesic Foliations}

Fixing a hyperbolic metric on $S$ identifies the geodesic foliations of $T^1 \widetilde{S}$ and $T^1\mathbb{H}^2$, and identifies $\partial \Gamma$ with $\partial \mathbb{H}^2$. There is a well-known description of $T^1\mathbb{H}^2$ as orientation-compatible triples $(t_+, t_0,t_-)$ of distinct points in $\partial \Gamma$. We denote the space of such triples $\partial \Gamma^{(3)+}$. One obtains this identification by associating to $(p,v) \in T^1(S)$ the endpoints at infinity of the geodesic $\ell$ determined by $v$ as $t_-, t_+$, and the endpoint $t_0$ of the geodesic perpendicular to $\ell$ at $p$ that makes $(t_+, t_0,t_-)$ orientation-compatible (see Figure \ref{Unit-tangent-bundle-picture}).

Under this identification, the leaves of the semi-stable geodesic foliation $\overline{\mathcal{F}}$ are the collections of elements of $\partial \Gamma^{(3)+}$ with fixed $t_+$ entry, and the leaves of the stable geodesic foliation $\overline{\mathcal{G}}$ are the collections of elements of $\partial \Gamma^{(3)+}$ with fixed $t_-$ and $t_+$ entries. So the leaf spaces of $\overline{\mathcal{F}}$ and $\overline{\mathcal{G}}$ are identified with $\partial \Gamma$ and $\partial \Gamma^{(2)} := \Gamma \times \Gamma - \{(x,x) \mid x \in \Gamma\}$. In the following, we shall identify elements of $\partial \Gamma$ and $\partial \Gamma^{(2)}$ and the corresponding leaves of $\overline{\mathcal{F}}, \overline{\mathcal{G}}$.

\begin{figure}
    \centering
    \includegraphics[scale=0.40]{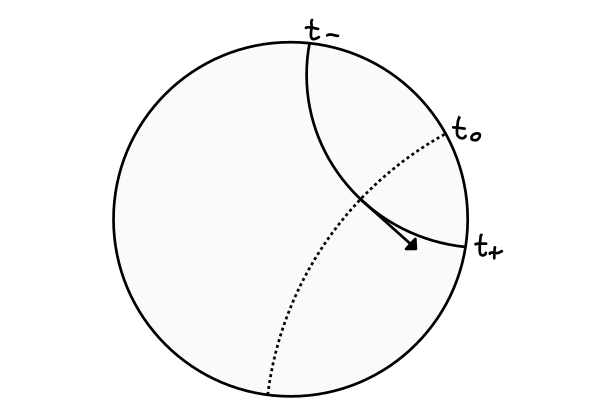} \vspace{-0.5cm}
    \caption{The unit tangent bundle $T^1\mathbb{H}$.}\label{Unit-tangent-bundle-picture}
\end{figure}

 This identification between $T^1 \widetilde{S}$ and $ \partial \Gamma^{(3)+}$ is equivariant with respect to the natural actions of $\Gamma$, and as a consequence, the topological type of the pair $(\mathcal{F}, \mathcal{G})$ is independent of the choice of hyperbolic metric.

\subsubsection{Properly Convex Foliated Projective Structures}

Consider $T^1S$, together with its stable and semi-stable foliations $(\mathcal{F}, \mathcal{G})$. Let $\mathcal{P}(S)$ denote the collection of projective structures on $T^1S$.

\begin{definition} Let $P$ be a projective structure on $T^1S$, viewed as an atlas of charts $\{ (U, \varphi_U)\}$ to $\mathbb{RP}^3$ with projective transisitons. Denote (a representative of) the developing data of $P$ as {\rm{$(\text{dev}, \text{hol})$}}.
\begin{enumerate} \item $P$ is {\rm{foliated}} if given any chart $(U, \varphi_U)$ and $v \in U$ contained in the leaves $g_v \cap U \in \mathcal{G}|_U$ and $f_v \cap U \in \mathcal{F}|_U$, then $\varphi_U(g_v \cap U)$ is contained in a projective line and $\varphi_U(f_v \cap U)$ is contained in a projective plane.
\item If for any leaf $f \in \widetilde{\mathcal{F}}$, the developed image {\rm{$\text{dev}(f)$}} is a properly convex domain in a projective plane, then we say $P$ is {\rm{properly convex}}.
\item Two foliated projective structures $P, P'$ are said to be equivalent if there is a homeomorphism $h$ of $T^1S$ isotopic to the identity that is a projective equivalence $h$ of $P$ and $P'$ so that $h^* \mathcal{F} = \mathcal{F}$ and $h^* \mathcal{G} = \mathcal{G}$.
\item Let $\mathcal{P}_f(S)$ and $\mathcal{P}_{pcf}(S)$ denote the collections of equivalence classes of foliated and properly convex foliated projective structures on $T^1S$, respectively.
\end{enumerate}
\end{definition}

Note that it is not clear that the natural mappings of $\mathcal{P}_{f}(S)$ and $\mathcal{P}_{pcf}(S)$ to $\mathcal{P}(S)$ given by forgetting the extra structure are injective, since the equivalence relation is refined. Developing maps of properly convex foliated projective structures always factor through $T^1\widetilde{S}$ as a consequence of \cite{guichard2008convex}, so we may work with $\overline{\mathcal{F}}$ and $\overline{\mathcal{G}}$ in place of $\widetilde{\mathcal{F}}$ and $\widetilde{\mathcal{G}}$.

Let $p: \overline{\Gamma} \to \Gamma$ be the map induced by the projection $T^1(S) \to S$. In \cite{guichard2008convex}, it is proved that for properly convex foliated projective structures $(\text{dev}, \text{hol})$, the value of $\text{hol}(\gamma)$ ($\gamma \in \overline{\Gamma})$ depends only on $p(\gamma)$. So any properly convex foliated projective structure induces a representation $\text{hol}_*: \Gamma \to \text{PSL}(4,\bbR)$, well-defined up to conjugacy. Write by $[\text{hol}_*]$ the associated conjugacy class of representations. In \cite{guichard2008convex} the following characterization of properly convex foliated projective structures in terms of the $\text{PSL}(4, \mathbb{R})$ Hitchin component is proved.

\begin{theorem}[Guichard-Wienhard \cite{guichard2008convex}]\label{classification-theorem}
    The holonomy map {\rm{$\mathcal{P}_{pcf}(S) \to \text{Hit}_4(S)$}} given by {\rm{$(\text{dev}, \text{hol}) \mapsto [\text{hol}_*]$}} is a homeomorphism.
\end{theorem}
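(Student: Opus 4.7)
The plan is to prove the three assertions bundled into the statement separately: that the map is well-defined with image in $\text{Hit}_4(S)$, that it is a continuous bijection, and that its inverse is continuous. The cornerstone of the argument will be passing between properly convex foliated data on $T^1\widetilde S = \partial \Gamma^{(3)+}$ and hyperconvex Fr\'enet curves on $\partial \Gamma$, so that Labourie's and Guichard's characterization of Hitchin representations does most of the work of identifying the image.

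First I would verify that the holonomy $\rho$ of any properly convex foliated projective structure $P = (\text{dev},\text{hol})$ is Hitchin. For each $x \in \partial \Gamma$ the leaf of $\overline{\mathcal F}$ over $x$ develops into a properly convex domain inside a projective plane $\xi^3(x) \in \text{Gr}_3(\mathbb R^4)$, and for each $(x,y)\in \partial \Gamma^{(2)}$ the leaf of $\overline{\mathcal G}$ over $(x,y)$ develops into an open segment of a projective line $\xi^2(x,y) \subset \xi^3(x) \cap \xi^3(y)$. The common endpoints of nested segments as $y$ varies pin down a point $\xi^1(x) \in \mathbb R \mathbb P^3$ for each $x \in \partial \Gamma$. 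All three maps are $\rho$-equivariant and continuous by construction, and I would check the Fr\'enet convexity/osculation axioms by feeding transversality of stable and semi-stable leaves (guaranteed by the foliated condition in charts) into the definitions, together with proper convexity of each $\text{dev}(f)$. This produces a $\rho$-equivariant hyperconvex Fr\'enet curve and therefore places $[\rho] \in \text{Hit}_4(S)$ by Labourie--Guichard.

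For injectivity I would run the usual rigidity argument for $(G,X)$-structures: two properly convex foliated structures with conjugate holonomies may, after conjugation of developing maps, be assumed to have the same holonomy, at which point their developing maps are $\rho$-equivariant and land in $\mathbb R\mathbb P^3$. Restricted to any leaf of $\overline{\mathcal F}$ they map to properly convex domains with the same $\rho$-equivariant boundary data (determined by the Fr\'enet curve extracted as above), so they agree on leaves up to the constant projective identification, and hence agree globally. This produces a projective equivalence covering the identity that preserves $\mathcal F$ and $\mathcal G$, giving the required equivalence of foliated structures. For surjectivity, given $\rho \in \text{Hit}_4(S)$ with Fr\'enet curve $(\xi^1,\xi^2,\xi^3)$, I would build a developing map by sending $(t_-,t_0,t_+)\in\partial \Gamma^{(3)+} = T^1\widetilde S$ to the point in the projective plane $\xi^3(t_+)$ determined by intersecting the line $\xi^2(t_-,t_+) := \xi^1(t_-)\oplus \xi^1(t_+)$ with an auxiliary line extracted from $t_0$; proper convexity of each semi-stable leaf follows from the positivity (general position and nestedness) assertions of Proposition \ref{Intersection-lemmas}, and the $\rho$-equivariance is automatic.

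Finally, for the homeomorphism statement, continuity of $[\text{hol}_*]$ in the natural topology on $\mathcal P_{pcf}(S)$ is standard, since $(\text{dev},\text{hol})$ varies continuously in local charts. Continuity of the inverse reduces to continuity of the construction above in $\rho$, which in turn reduces to continuity of $\rho \mapsto \xi_\rho$ in the compact-open topology, a known consequence of the Anosov property of Hitchin representations. The main obstacle I expect is the step building $\xi^1, \xi^2, \xi^3$ from a foliated structure and, dually, building $\text{dev}$ from the Fr\'enet curve while verifying proper convexity leafwise: the foliated condition in projective charts gives transversality data in $\mathbb R \mathbb P^3$, but translating this into the global hyperconvexity axioms requires careful use of the product structure of $\partial \Gamma^{(3)+}$ and the compatibility of stable and semi-stable leaves.
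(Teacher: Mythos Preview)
The paper does not prove this theorem. It is stated as background and attributed to Guichard--Wienhard \cite{guichard2008convex}; the surrounding text explicitly says ``In \cite{guichard2008convex} the following characterization \ldots\ is proved.'' So there is no proof in the paper to compare your proposal against.

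That said, the paper does recall from \cite{guichard2008convex} the explicit formula for the developing map in terms of the Fr\'enet curve (the map $\xi^1_t(t')$ and the intersection-of-lines construction in \S\ref{convex-projective-structures-background}), which is the surjectivity direction of your sketch. Your outline is in the right spirit, but a few points are looser than what the actual Guichard--Wienhard argument requires. First, your extraction of $\xi^2$ as a two-argument map $\xi^2(x,y)$ is not the Fr\'enet $\xi^2$; the Fr\'enet curve has $\xi^2:\partial\Gamma \to \text{Gr}_2(\mathbb R^4)$ as a one-argument map, and recovering it from the foliated structure (and then verifying osculation) is not the same as recording the line containing a given geodesic leaf. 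Second, your injectivity argument assumes that two developing maps with the same holonomy must have the same Fr\'enet boundary data, which is essentially the content of uniqueness of the Fr\'enet curve for a Hitchin representation and needs to be invoked rather than asserted. Third, the step ``proper convexity of each semi-stable leaf follows from the positivity assertions of Proposition \ref{Intersection-lemmas}'' is where the real work in \cite{guichard2008convex} lies; it is not a short deduction from general position and dual osculation. None of these are fatal to the strategy, but the proof in \cite{guichard2008convex} is substantially longer than your sketch suggests, and the paper under review simply cites it.
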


The main definition to our investigations is:

\begin{definition}
    Given a properly convex foliated projective structure induced by a representation $\rho$, under the natural identification of the leaf space of $\overline{\mathcal{F}}$ and $\partial \Gamma$, we denote by $\mathfrak{s}_\rho(x)\in \mathfrak{C}$ the projective equivalence class of {\rm ${\text{dev}(x)}$ }. We call $\mathfrak{s}_\rho$ the {\rm{leaf map}} of $\rho$.
\end{definition}

One useful tool developed by Guichard and Wienhard in their proof that all Hitchin representations induce properly convex foliated projective structures is an explicit description of the developing map of the associated projective structure in terms of the hyperconvex Fren\'et curve $\xi = (\xi^1, \xi^2, \xi^3)$. See Figure \ref{picture-of-dev}, and discussion below.

To be more explicit, fix a Hitchin representation $\rho :\Gamma \to \text{PSL}(4,\bbR)$, and denote the corresponsing equivariant Fren\'et curve by $\xi = (\xi^1, \xi^2, \xi^3)$. Using the identification of $\partial \Gamma$ with the leaf space of $\overline{\mathcal{F}}$, denote semi-stable leaves of the geodesic foliation on $T^1 \widetilde{S}$ by $x \in \partial \Gamma$.

\begin{figure}
\begin{center} \makebox[0pt]{\, \, \, \, \, \, \, \, \, \, \, \, \, \,  \includegraphics[scale=0.16]{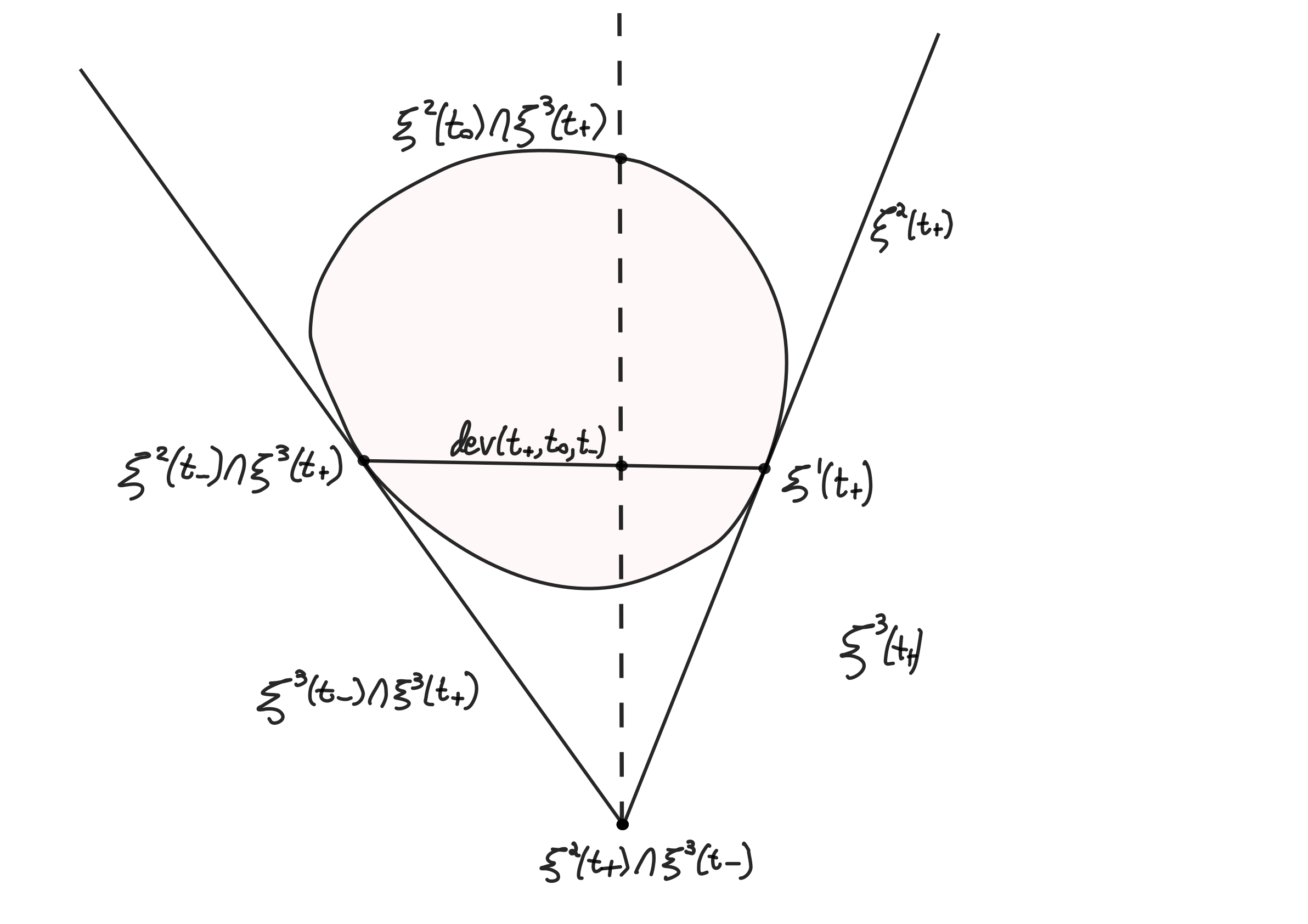}} \vspace{-0.5cm}
\end{center}
\caption{The developing map in terms of the Fren\'et curve.}\label{picture-of-dev}
\end{figure}

Following the notation of Guichard-Wienhard, define the two-argument map $\xi^1: \partial \Gamma \times \partial \Gamma \to \RP^3$ by 
\begin{align*} \xi^1_{t}(t') = \begin{cases} \xi^3(t) \cap \xi^2(t') & t \neq t' \\ 
\xi^1(t) & t = t'
\end{cases}. \end{align*} Then we can define the developing map of the projective structure we seek as
\begin{align*}
    \text{dev}:\qquad \partial \Gamma^{3+} &\to \RP^3 \\
    (t_+, t_0, t_-) &\mapsto \overline{\xi^1(t_+) \xi^1_{t_+}(t_-)} \cap \overline{\xi^1_{t_-}(t_{+}) \xi^1_{t_+}(t_0)},
\end{align*}where we denote the line in $\mathbb{RP}^3$ determined by two points $a$ and $b$ by $\overline{ab}$. Write $\Omega_\rho:= \text{dev}(\partial \Gamma^{(3)+})$.

A few qualitative remarks are in order. Here the boundary of $\Omega_\rho$ is given by $\partial \Omega = \bigsqcup_{t \in \partial \Gamma} \xi^2(t)$, where disjointness is a consequence of hyperconvexity. For any $x \in \partial \Gamma$, the leaf $\text{dev}(x)$ is $\xi^3(x) \cap \Omega_\rho$. The boundary of $\text{dev}(x)$ is given by $\{\xi^2(y) \cap \xi^3(x) : y \neq x \} \cup \{\xi^1(x)\}$. A supporting line to $\partial \text{dev}(x)$ at $\xi^2(y) \cap \xi^3(x)$ is $\xi^3(y) \cap \xi^3(x)$, and a supporting line to $\partial \text{dev}(x)$ at $\xi^1(x)$ is $\xi^2(x)$. These lines do not intersect $\text{dev}(x)$ due to the general position property of Fren\'et curves and our description of the boundary of $\text{dev}(x)$. We shall show in \S \ref{super-elementary-remarks} that these supporting lines are unique.

\begin{figure}[t] \vspace{-1cm}
\begin{center} \makebox[0pt]{\, \, \,\, \, \, \, \, \includegraphics[scale=0.28]{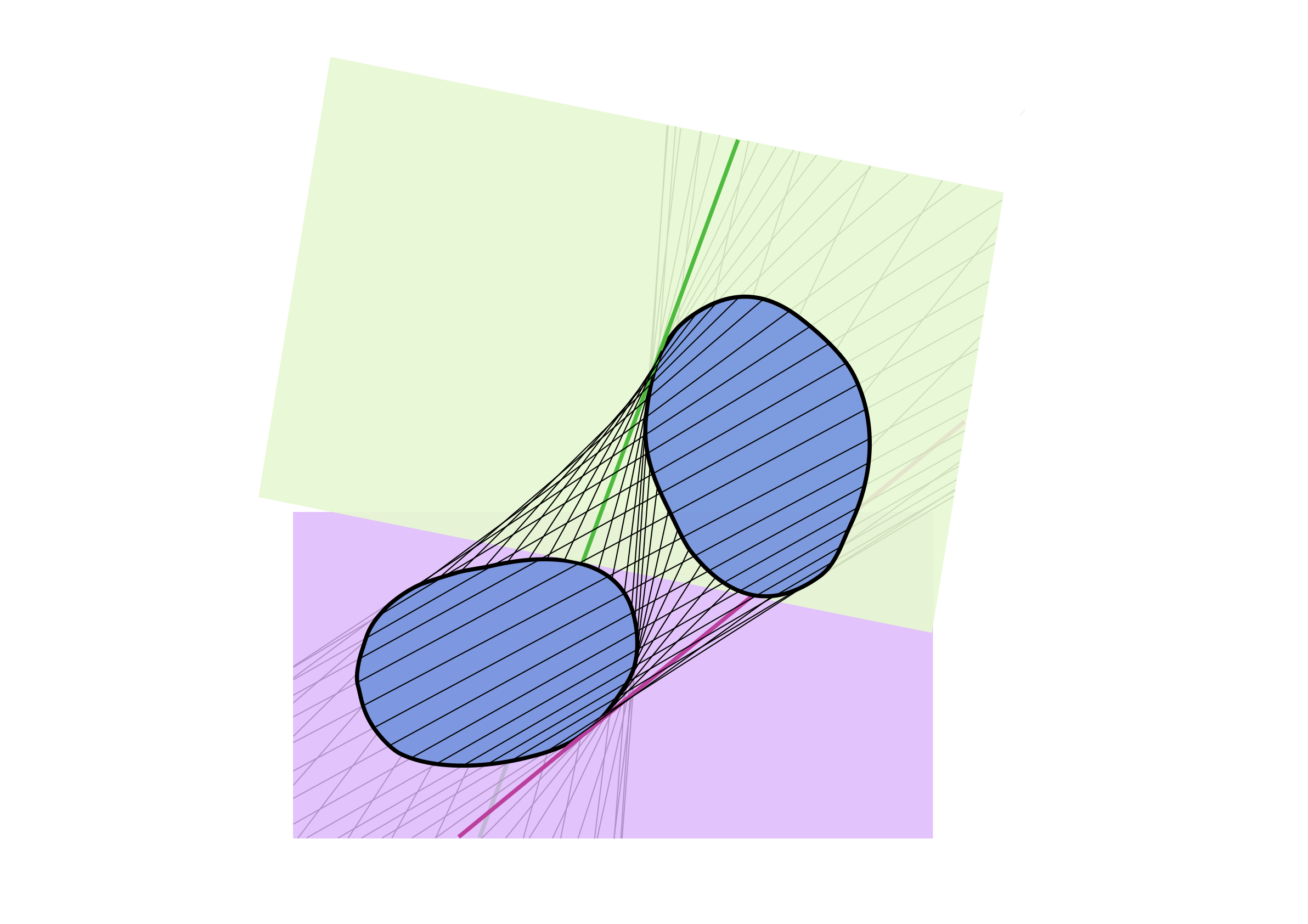}}
\end{center}\vspace{-1.5cm}
\caption{Sketch of relevant aspects of two slices of a domain of discontinuity and the ruling of the boundary by lines.}\label{figure-two-slices}
\end{figure}

\subsection{Two Remarks on Boundaries of Leaves}\label{super-elementary-remarks}

In this subsection, we describe two basic geometric features of the leaves $\text{dev}(x)$.

Our first observation is that the ruling of the boundary of $\Omega$ by $\xi^2(x)$ ($x \in \partial \Gamma$) gives rise to natural identifications of boundaries of leaves $\partial \text{dev}(x)$. Geometrically, any boundary point $p$ of $\text{dev}(x)$ is contained in exactly one $\xi^2(y)$ for $y \in \partial \Gamma$. Given another $x' \in \partial \Gamma$, the identification of boundaries maps $p$ to the unique intersection of $\xi^2(y)$ with $\partial \text{dev}(x')$ (see Figure \ref{figure-two-slices}). The below expresses this symbolically.

\begin{definition}
    For $x,x'\in \partial \Gamma$, define a the map {\rm $\Xi_{x \to x'}: \partial \text{dev}(x) \to \partial \text{dev}(x')$} by $\Xi_{x\to x'}(\xi^1_x(y)) = \xi_{x'}^1(y)$ for $y \in \partial \Gamma$. 
\end{definition}

As a consequence of continuity of $\xi^1_{x}(y)$, which follows from dual osculation in Proposition \ref{Intersection-lemmas}, the maps $\Xi_{x \to x'}(y)$ vary continuously in $x,x'$, and $y$.

Our second observation concerns the structure of the boundary of $\partial \text{dev}(x)$ for $x \in \partial \Gamma$: they are strictly convex and $C^1$. Strict convexity, in particular, is a tool that we use for some obstructions later.

\begin{proposition}[Basic Regularity]
     For all $x \in \partial \Gamma$, the leaf {\rm{$\text{dev}(x)$}} is strictly convex and has $C^1$ boundary.
\end{proposition}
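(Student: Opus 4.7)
The plan is to show that at every $p \in \partial \text{dev}(x)$ there is a continuously varying supporting line $\ell_p$ with $\ell_p \cap \partial \text{dev}(x) = \{p\}$, from which both strict convexity and $C^1$ regularity follow. The former is immediate: a flat segment on the boundary would, at any interior point $p'$, force $\ell_{p'}$ to coincide with the segment's line, contradicting $\ell_{p'} \cap \partial \text{dev}(x) = \{p'\}$. For the latter, granted strict convexity, the absence of a corner at any $p_0$ follows from a standard argument: sequences $p_n^\pm \to p_0$ from both sides, chosen to avoid the at most countable set of other corners, satisfy $\ell_{p_n^\pm} \to \ell_{p_0}$ by continuity, while at such $C^1$ points $\ell_{p_n^\pm}$ are the unique supporting lines and must converge to the two extreme supporting lines at $p_0$, forcing these extremes to agree.

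The candidate supporting lines are $\ell_{\xi^1_x(y)} := \xi^3(x) \cap \xi^3(y)$ for $y \neq x$ and $\ell_{\xi^1(x)} := \xi^2(x)$, already known to be supporting lines from the discussion preceding the statement. The equality $\ell_p \cap \partial \text{dev}(x) = \{p\}$ is a direct consequence of the general position part of Proposition \ref{Intersection-lemmas}: a hypothetical extra boundary point $\xi^1_x(z)$ on $\xi^3(x) \cap \xi^3(y)$ would produce a nonzero vector in $\xi^2(z) \cap \xi^3(x) \cap \xi^3(y)$ for distinct $x,y,z$, contradicting general position with $(k_1,k_2,k_3) = (2,1,1)$. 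Analogously, $\xi^1(x) \not\in \xi^3(y)$ is ruled out by $(k_1,k_2) = (1,3)$, and no other $\xi^1_x(z)$ lies on $\xi^2(x)$ by $(2,2)$.

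Continuity of $y \mapsto \ell_{\xi^1_x(y)}$ is clear for $y \neq x$ from continuity of $\xi^3$ together with the transversality $\xi^3(x) + \xi^3(y_0) = \bbR^4$ for $y_0 \neq x$. The main obstacle is continuity at $y = x$: as $y_m \to x$, the planes $\xi^3(y_m)$ tend to $\xi^3(x)$ and the intersection $\xi^3(x) \cap \xi^3(y_m)$ ceases to be transverse in the limit. My plan here is to invoke the dual osculation property (Proposition \ref{Intersection-lemmas}(2)) applied to the pair $(x, y_m)$ with $y_m \to x$, $y_m \neq x$, which identifies the Grassmannian limit of $\xi^3(x) \cap \xi^3(y_m)$ as the osculating plane $\xi^2(x) = \ell_{\xi^1(x)}$. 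With continuity of $\ell$ established, both strict convexity and $C^1$ follow.
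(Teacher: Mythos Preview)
Your proof is correct and uses the same core ingredients as the paper (general position for the single-intersection property, dual osculation for continuity at $y=x$), but the $C^1$ argument is organized differently. The paper argues via duality: the map $\partial\Gamma \to \partial\text{dev}(x)^*$ sending $y$ to the dual of your $\ell_{\xi^1_x(y)}$ is a continuous injection of a circle into a circle, hence a homeomorphism; thus \emph{every} supporting line is one of the $\ell_p$, and uniqueness of supporting lines (hence $C^1$) is immediate. You instead run a one-sided-limit argument, using continuity of $y \mapsto \ell_{\xi^1_x(y)}$ together with the semicontinuity of extreme supporting lines of a convex body to force the two extremes at a putative corner to coincide. The duality route is a bit slicker and yields the stronger statement that the $\ell_p$ exhaust all supporting lines; your route is more elementary and avoids introducing the dual domain. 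For strict convexity the two arguments are essentially the same, yours phrased via the single-intersection property, the paper's via identifying the supporting line along a hypothetical segment as $\xi^3(y)\cap\xi^3(x)$ for several $y$.
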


\begin{proof}
To show that $\text{dev}(x)$ is $C^1$, we consider the dual properly convex domain $\text{dev}(x)^* \subset \xi^3(x)^*$. The boundary $\partial \text{dev}(x)^*$ is a topological circle consisting of supporting lines to $\partial \text{dev}(x)$. The path $\partial \Gamma \to \partial \text{dev}(x)^*$ given by $$y \mapsto \begin{cases} (\xi^3(y) \cap \xi^3(x))^* & y \neq x \\ \xi^2(x)^* & y = x 
\end{cases}$$ is a continuous injection of $\partial \Gamma \cong S^1$ into $\partial \text{dev}(x)^* \cong S^1$, and so must be surjective. So all supporting lines to $\text{dev}(x)$ must be of the form $\xi^3(y) \cap \xi^3(x)$ or $\xi^2(x)$. In particular, all boundary points of $\text{dev}(x)$ have unique tangent lines, which implies $\partial \text{dev}(x)$ is $C^1$.

Strict convexity follows from the general position property of Fren\'et curves as follows. Supposing otherwise, $\partial \text{dev}(x)$ must contain an interval $I$, contained in a line $\ell_I$. For any $y \neq x \in\partial \Gamma$ so that $\xi_x^1(y)$ is in the interior of $I$, we must have $\xi^3(y) \cap \xi^3(x) = \ell_I$, as this is a supporting line to $\partial \text{dev}(x)$ at a point in $I$. This is impossible by the general position property of Fren\'et curves, and proves strict convexity.
\end{proof}

\section{Proofs of the Main Theorems}\label{section-proof-of-main-theorem}

In this section we prove our main theorems. The vast majority of the effort is spent showing $\mathfrak{s}_\rho$ is not constant unless the Hitchin representation $\rho$ is $4$-Fuchsian. We begin by setting notation in \S \ref{notations-conventions-definitions}. An outline of the structure of the core of our proofs is then given in \S \ref{outline-of-main-proof}, and the remainder of the paper is spent following this outline.

\subsection{Notation, Conventions, and Definitions.}\label{notations-conventions-definitions}
Let us begin by setting up notation to facilitate comparison of projective types of leaves.

The group $\text{SL}(3,\bbR)$ acts simply transitively on $4$-ples of points in general position in $\mathbb{RP}^2$. So, by fixing a point $t_0 \in \partial \Gamma$ and a continuously varying family of $4$ points $$\{(p_1(t), p_2(t), p_3(t), p_4(t)) \mid  t \in \partial \Gamma \} \subset \mathbb{RP}^3$$ so that $p_i(t) \in \xi^3(t)$ ($i=1,...,4)$ and the points $(p_1(t), p_2(t), p_3(4), p_4(t))$ are in general position within $\xi^3(t)$ for all $t \in \partial \Gamma$, we induce well-determined projective equivalences $\xi^3(t) \to \xi^3(t_0)$ for all $t \in \partial \Gamma$.

One way to produce such a normalization is to take $4$ distinct points $x_1, ..., x_4 \in \partial\Gamma$ and let $p_i(t)$ ($i =1,2,3,4)$ be the unique point of intersection between $\xi^2(x_i)$ and $\partial \text{dev}(t)$. The continuity of the points $p_i(t)$ results in such a normalization being continuous in the sense that the induced mappings from a reference $\mathbb{RP}^2$ with $4$ fixed points in general position to $\xi^3(t) \subset \mathbb{RP}^3$ vary continuously.

Throughout the following, we shall once and for all fix such a normalization and view all domains ${\text{dev}(t)}$ as subsets of $\mathbb{RP}^2 \cong \xi^3(t_0)$. When relevant, we will write the map $\xi^3(t) \to \xi^3(t_0)$ by $N_{t \to t_0}$. We denote $N_{t \to t_0}(\text{dev}(t))$ by $C_t$. At times when not doing so would make notation extremely cumbersome, we abuse notation to suppress the normalization used to identify $\text{dev}(t)$ and $C_t$.

\begin{definition}
    Given a Hitchin representation $\rho$, domains $C_t$ as above, a subset $S \subset \partial \Gamma$, and a reference point $s_0 \in S$, a {\rm{projective equivalence of leaves over}} $S$ is a function {\rm{$f: S \to \text{Aut}(\xi^3(t_0))$}} so that $f(t)C_{s_0} = C_t$ for all $t \in S$.
\end{definition}

Projective equivalences of leaves need not exist over a given subset $S \subset \partial \Gamma$. The leaf map $\mathfrak{s}_\rho$ is constant if and only if a family of projective equivalences over $\partial \Gamma$ exists. We do not assume continuity or any sort of regularity, measurability, or the like of projective equivalences over sets $S$ unless explicitly noted.

At times, it will be useful to consider projective equivalences of leaves as two-argument maps between leaves seen as subsets of $\mathbb{RP}^3$, which the next bit of notation facilitates.

\begin{definition}
    Given a projective equivalence $f$ of leaves over $S$ and $t, t' \in S$, define the projective equivalence $f(t,t'): \text{dev}(t) \to \text{dev}(t')$ by $$f(t,t') = N_{t'\to t_0}^{-1} \circ f(t') \circ f(t)^{-1} \circ N_{t \to t_0}.$$ 
\end{definition}

We adopt one final piece of notation in the following: if $x \in \partial \Gamma$ and $p \in \partial C_x$, if $x$ is $\alpha$-modelled, we denote the modelling coefficient $\alpha$ of $C_x$ at $p$ by $\text{model}_x(p)$.

\subsection{Outline of Proof that non-Fuchsian Leaf Maps are Nonconstant}\label{outline-of-main-proof}

Our proof assumes that $\mathfrak{s}_\rho$ is constant, so that there is a projective equivalence $f$ over $\partial \Gamma$, and proves that $\rho$ is $4$-Fuchsian through obtaining constraints on the eigenvalues of $\rho(\Gamma)$.

In order to get initial leverage for our arguments, we require some control on the automorphisms of individual leaves $\mathfrak{s}_\rho(x)$. The dichotomy we use to get this control is the closed subgroup Theorem, which in our setting implies that either for every $x\in \partial \Gamma$ every $\mathfrak{s}_\rho(x)$ has discrete projective automorphism group, or there is an $x \in \partial \Gamma$ so that $\text{Aut}(\xi^3(t_0), C_x) \subset \text{SL}(3, \mathbb{R})$ contains a $1$-parameter subgroup.

The discrete case is the most involved. In it, we first show that though $f$ may be everywhere discontinuous, we may modify $f$ to obtain a \textit{continuous} family $\widetilde{f}$ of projective equivalences over a nonempty open set $U \subset \partial \Gamma$, which can be enlarged using equivariance of leaf maps. The informal idea of the phenomenon underlying why this possible is that all of the discontinuity of $f$ comes from two sources: projective automorphisms of $\mathfrak{s}_\rho(x)$, and divergent families of projective equivalences $A_t$ so that $A_t \overline{C_{t_0}}$ converges to $\overline{C_{t'}}$ in the Hausdorff topology for some $t'$. This is exploited by carefully choosing countable covers $S_i$ of $\partial \Gamma$ so that $f$ is well-behaved on each $S_i$, then applying the Baire category theorem to show some $S_i$ is large enough to be useful.

Next, we use a ``sliding'' argument based on $\alpha$-modelling of boundary points to show that if $\gamma \in \Gamma$ and there is a continuous family of projective equivalences $g$ over an appropriate open set $U_\gamma \subset \partial \Gamma$, the logarithms of the eigenvalues of $\rho(\gamma)$ satisfy a homogeneous polynomial.

Finally, we apply the eigenvalue constraints obtained from the condition that $\mathfrak{s}_\rho$ is constant to show that $\rho$ must be Fuchsian. We use two tools here. Our starting point is that the classification of Zariski closures of Hitchin representations forces $\rho(\Gamma)$ to be Zariski dense in an appropriate Lie group. Inside this Lie group, we may apply work of Benoist on limit cones of Zariski-dense subgroups and find that our polynomial constraint is incompatible with the structure of limit cones unless $\rho$ is Fuchsian.

If any leaf has non-discrete automorphism group, the closed subgroup theorem forces any leaf to have extremely restricted structure, and in particular a rather smooth boundary. This, together with the closedness of the image of $\mathfrak{s}_\rho$, reduces to the case where every leaf is an ellipse. This is then handled with the classification of Zariski closures of Hitchin representations and Benoist's limit cone theorem, as in the discrete case.

The discrete case is the topic of \S\ref{subsection-discrete-case}. Continuity is addressed in \S \ref{subsubsection-discrete-continuity} and boundary $\alpha$-modelling constraints in \S \ref{subsubsection-discrete-continuity-to-eigenvalues}. In \S \ref{subsubsection-zariski-closures} we show that $\rho$ is Fuchsian or $\rho(\Gamma)$ is Zariski dense. In \ref{subsubsection-Limit-Cone} we recall Benoist's theorem on limit cones and apply it to show that $\rho$ is Fuchsian. We show that leaf maps have closed image in \ref{section-thanks-Yves}. The non-discrete case is then completed in \S \ref{subsection-non-discrete}. We explain how Theorems \ref{theorem-fuchsian-locus-characterization} and \ref{headline-theorem-wild-pathology} follow in \S \ref{subsubsection-putting-it-together}.

\subsection{The Discrete Case}\label{subsection-discrete-case} In this subsection, we assume that the group $\text{Aut}(\xi^3(t_0), C_x)$ of projective automorphisms of $C_x$ is discrete for all $x \in \partial \Gamma$.

\subsubsection{Continuity}\label{subsubsection-discrete-continuity} We contend first with the poor separation of points in $\mathfrak{C}$. Some intuition from Benz\'ecri's compactness theorem is that for a domain $\Omega$ with $\text{Aut}(\Omega)$ discrete, projective equivalences of $\text{Aut}(\Omega)$ and divergent sequences $A_n$ so $A_n \Omega \to \Omega$ in $\mathfrak{C}$ should be the only possible discontinuities of a family of projective equivalences. The key observation of this paragraph is that in this setting, as in these two examples, all of the discontinuity of $f$ comes from jumps of (locally) definite size.

It is useful to know that the domains $C_t$ vary continuously in the Hausdorff topology.

\begin{lemma}[Leaf Map Basics]\label{lemma-leaf-maps-basics}
    Let {\rm{$\rho \in \text{Hit}_4(S)$}}. Then $C_t$ is continuous in $t$, $\mathfrak{s}_\rho$ is continuous, and if $x \in \partial \Gamma$ we have $\mathfrak{s}_\rho(x) = \mathfrak{s}_\rho(\gamma x)$ for all $\gamma \in \Gamma$.
\end{lemma}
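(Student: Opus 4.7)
I would separate the three assertions and handle them in the order equivariance, Hausdorff continuity of $C_t$, continuity of $\mathfrak{s}_\rho$. Equivariance is essentially built into the construction: the Frenet curve satisfies $\rho(\gamma) \cdot \xi^k(x) = \xi^k(\gamma x)$ for $k = 1,2,3$, and the developing map is produced from $\xi$ via joins and intersections of the image subspaces, so $\rho(\gamma) \in \text{PSL}(4,\mathbb{R})$ carries $\xi^3(x)$ to $\xi^3(\gamma x)$ and maps $\text{dev}(x)$ onto $\text{dev}(\gamma x)$ as a projective equivalence of planes. Therefore $\mathfrak{s}_\rho(x) = \mathfrak{s}_\rho(\gamma x)$ in $\mathfrak{C}$.

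For Hausdorff continuity of $t \mapsto C_t$, I would parametrize $\partial \text{dev}(t)$ by the two-argument map $y \mapsto \xi^1_t(y) = \xi^3(t) \cap \xi^2(y)$, extended across the diagonal by $\xi^1_t(t) := \xi^1(t)$. The dual osculation statement of Proposition \ref{Intersection-lemmas} says exactly that $\xi^1_t(y)$ is jointly continuous in $(t,y) \in \partial \Gamma \times \partial \Gamma$. By compactness of $\partial \Gamma$, this joint continuity upgrades to uniform convergence $\xi^1_{t_n}(\cdot) \to \xi^1_t(\cdot)$ whenever $t_n \to t$, and uniform convergence of parametrizations of loops produces Hausdorff convergence of their images $\partial \text{dev}(t_n) \to \partial \text{dev}(t)$. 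Since each $\text{dev}(t)$ is strictly convex with $C^1$ boundary by the Basic Regularity proposition, this passes to Hausdorff convergence of the closures $\overline{\text{dev}(t_n)} \to \overline{\text{dev}(t)}$. Applying the normalizations $N_{t \to t_0}$, which depend continuously on $t$ because the points $p_i(t) = \xi^2(x_i) \cap \partial \text{dev}(t)$ do, transports this continuity to the normalized domains $C_t \subset \xi^3(t_0)$.

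Continuity of $\mathfrak{s}_\rho : \partial \Gamma \to \mathfrak{C}$ is then immediate, as it factors as $t \mapsto C_t \in \mathcal{C}$ followed by the quotient map $\mathcal{C} \to \mathfrak{C}$, both continuous by definition of the topologies.

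\textbf{Expected difficulty.} There is no serious obstacle here; the entire argument is bookkeeping on top of Proposition \ref{Intersection-lemmas} and the equivariance of the Frenet curve. The only micro-step requiring a moment of thought is upgrading joint continuity of $\xi^1_t(y)$ to Hausdorff convergence of the boundary curves, and that is the standard fact that uniformly convergent parametrizations of loops on a compact base have Hausdorff-convergent images. This lemma is preparatory, and the real work of the section begins only once it is in hand.
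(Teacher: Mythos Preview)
Your proposal is correct and follows essentially the same approach as the paper: the paper also parametrizes $\partial C_t$ via the continuous two-argument map (written there as $N_{t\to t_0}\circ\Xi_{t_0\to t}$, which is your $\xi^1_t$ after normalization), invokes its continuity in $t$ to get Hausdorff continuity of $\overline{C_t}$, and deduces equivariance from $\rho(\gamma)|_{\xi^3(x)}$ being a projective equivalence onto $\xi^3(\gamma x)$. Your write-up simply makes the joint-continuity-to-Hausdorff-convergence step more explicit than the paper does.
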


Note that orbits of the action of $\Gamma$ on $\partial \Gamma$ are dense, as this action is minimal. So for all $x \in \partial \Gamma$, the leaf map $\mathfrak{s}_\rho(x)$ is constant on the dense set $\Gamma x$.

\begin{proof}
     Observe that $\overline{C_t}$ varies continuously in the Hausdorff topology on domains in $\xi^3(t_0)$, since $\partial C_t$ is parametrized by the continuous function $\partial \Gamma \to \xi^3(t_0)$ given by $N_{t\to t_0} \circ \Xi_{t_0 \to t}(x)$ for $x \in \partial \Gamma$, and $\Xi_{t_0\to t}$ depends continuously on $t$. So $\mathfrak{s}_\rho(t) = [C_t] \in \mathfrak{C}$ varies continuously.
     
     For the other claim, if $\gamma \in \Gamma$ we have $\mathfrak{s}_\rho(\gamma x) = [\rho(\gamma)(\text{dev}(x))]$, where $\rho(\gamma)|_{\xi^3(x)} : \xi^3(x) \to \xi^3(\gamma x)$ is induced by a linear map and hence a projective equivalence.
\end{proof}

We are now ready to prove the main proposition of this paragraph.

\begin{proposition}[Modify to Continuity]\label{key-reduction}
    Suppose that $\mathfrak{s}_\rho$ is has countable image and every leaf $\mathfrak{s}_\rho(x)$ has discrete automorphism group. Then $\mathfrak{s}_\rho$ is constant and there is a continuous projective equivalence $\widetilde{f}$ of leaves over a non-empty open set $U \subset \partial \Gamma$.
\end{proposition}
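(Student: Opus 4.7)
The plan has two parts. The first deduces constancy from countable image using Baire category on $\partial \Gamma$ together with minimality of the $\Gamma$-action; the second uses a refined Baire argument, relying crucially on discreteness of $\text{Aut}(C_{t_0})$, to produce the continuous projective equivalence on an open subset.

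For constancy, enumerate the image of $\mathfrak{s}_\rho$ as $\{c_i\}_{i \geq 1}$, pick representatives $\Omega_i \in c_i$, and fix a compact exhaustion $\{K_n\}$ of $\text{SL}(3, \mathbb{R})$. Since $\mathcal{C}$ is Hausdorff in the Hausdorff topology and $t \mapsto C_t$ is continuous by Lemma \ref{lemma-leaf-maps-basics}, each set $\widetilde{T}_{n,i} := \{t \in \partial \Gamma : C_t \in K_n \cdot \Omega_i\}$ is closed in $\partial \Gamma$. These sets countably cover $\partial \Gamma$, so Baire category produces some $\widetilde{T}_{n,i}$ with non-empty interior $U$, on which $\mathfrak{s}_\rho \equiv c_i$. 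Density of $\Gamma$-orbits and $\Gamma$-invariance of $\mathfrak{s}_\rho$ (both from Lemma \ref{lemma-leaf-maps-basics}) then extend this constancy to all of $\partial \Gamma$.

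For the continuous section, set $\Omega := \Omega_i$ and $E_t := \{A \in \text{SL}(3, \mathbb{R}) : A\Omega = C_t\}$, a coset of the discrete group $\text{Aut}(\Omega)$. Pick $t^* \in U$ and enumerate the finite set $E_{t^*} \cap K_n = \{A_1^*, \ldots, A_k^*\}$. Choose small open balls $B_j \ni A_j^*$ with pairwise disjoint closures, small enough that the orbit map $A \mapsto A\Omega$ is injective on each $\overline{B_j}$ (possible because $\text{Aut}(\Omega)$ is discrete) and $\overline{B_j} \cap E_{t^*} = \{A_j^*\}$. Properness of the continuous surjection $K_n \to K_n \cdot \Omega$ makes the image of $K_n \setminus \bigcup_j B_j$ closed in $\mathcal{C}$ and missing $C_{t^*}$; this yields an open neighborhood $V \ni t^*$ inside $U$ on which $E_t \cap K_n \subset \bigcup_j B_j$. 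The closed sets $\widetilde{P}_j := \{t \in V : E_t \cap \overline{B_j} \neq \emptyset\}$ finitely cover the Baire space $V$, so one of them has non-empty interior $W$. On $W$, injectivity of the orbit map on $\overline{B_j}$ reduces $E_t \cap \overline{B_j}$ to a singleton $\{\sigma(t)\}$, and a standard compactness-uniqueness argument (any subsequential limit $\sigma(t_n) \to A$ lies in $\overline{B_j} \cap E_{t_0} = \{\sigma(t_0)\}$) shows $\sigma: W \to \overline{B_j}$ is continuous. Fixing $s_0 \in W$, the map $\widetilde{f}(t) := \sigma(t) \sigma(s_0)^{-1}$ is the required continuous projective equivalence of leaves over $W$.

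The main obstacle is the non-separation of $\mathfrak{C}$: preimages of single points under $\mathfrak{s}_\rho$ need not be closed in $\partial \Gamma$, and the orbit $\text{SL}(3, \mathbb{R}) \cdot \Omega$ is not open in $\mathcal{C}$, so neither conclusion admits a direct construction. Both Baire applications circumvent this by working with closed sets parameterized by compacta in $\text{SL}(3, \mathbb{R})$, and discreteness of $\text{Aut}(\Omega)$ is what makes the local orbit map on $\overline{B_j}$ injective---the very step where the analogous argument would fail if automorphism groups contained a one-parameter subgroup, which is why the non-discrete case is handled separately.
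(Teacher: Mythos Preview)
Your proof is correct and reaches the same conclusion by a genuinely different route than the paper's. The paper begins by choosing, on each fiber $D_m = \mathfrak{s}_\rho^{-1}(c_m)$, an \emph{arbitrary} (typically discontinuous) projective equivalence $f_m$, and then covers $\text{SL}(3,\mathbb{R})$ by carefully designed compact sets $K_{g_i}^m$ on which a separate lemma (``Discreteness is Conjugation-Stable'', proved in the appendix) guarantees a uniform lower bound on the distance from the identity to $\text{Aut}(C_s)\setminus\{e\}$ for all $s$ with $f_m(s)\in K_{g_i}^m$. That uniformity is used to show $f_m$ is \emph{uniformly continuous} on each $S_i^m = f_m^{-1}(K_{g_i}^m)$, hence extends to $\overline{S_i^m}$; a single application of Baire to the countable family $\{\overline{S_i^m}\}$ then produces the open set and the continuous equivalence simultaneously, with constancy deduced afterward from minimality.

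Your argument instead separates the two conclusions. Constancy comes from a first Baire step on the sets $\widetilde{T}_{n,i}$, which uses only that $K_n\cdot\Omega_i$ is compact in $\mathcal{C}$ and needs no discreteness hypothesis at all. The continuous section is then built directly as a \emph{selection} from the set-valued map $t\mapsto E_t$: discreteness enters only as local injectivity of $A\mapsto A\Omega$ on small balls $\overline{B_j}$, which replaces the paper's conjugation-stability lemma and the uniform-continuity argument entirely, and your second ``Baire'' step is really just a finite pigeonhole on the $\widetilde{P}_j$. Your route is more elementary---it avoids the auxiliary lemma and the uniform-continuity claim---while the paper's route has the minor advantage of packaging everything into one Baire application and of not needing to first localize near a chosen $t^*$.
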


\begin{proof}
    By hypothesis, we may write $\partial \Gamma = \bigsqcup_{m=1}^\infty D_m$ with $D_m$ sets so that for all $m \in \mathbb{N}$ there is some projective equivalence of leaves $f_m$ over $D_m$ with respect to a reference point $s_m \in D_m$. To begin, let us fix a right-invariant metric $d_P$ on $\text{SL}(3,\mathbb{R})$, and a metric $d_{S}$ on $\partial \Gamma$. Note that for all $s \in D_m$, we have $\text{Aut}(\xi^3(t_0), C_s) = f_m(s)\text{Aut}(\xi^3(t_0), C_{s_m})f_m(s)^{-1}$.
    
    To proceed, we need locally uniform control in $f_m(s)$ on the separation of $\text{Aut}(\xi^3(t_0), C_s)$ from the identity. To this end, we adopt the notation that for $\Lambda$ a discrete subgroup of a Lie group $G$ equipped with a right-invariant metric we set $\kappa(\Lambda) := \text{inf} \{ d(e,g) \mid g \in \Lambda -\{e\}\}$. Let us abbreviate conjugation by $\Psi_g: h \mapsto gh g^{-1}$. We obtain control through the following fact, which is a straightforward consequence of differentiablity of conjugation. We include a proof in the appendix for the convenience of the reader.
    
    \begin{lemma}[Discreteness is Conjugation-Stable]\label{lemma-discreteness-conjugation-stable}
    Let $G$ be a Lie group and $\Lambda < G$ be a discrete subgroup. Consider the function $\eta: g \mapsto \kappa(\Psi_g(\Lambda))$. Let $g_0 \in G$ be given. Then there is a neighborhood $U$ of $g_0$ so that $\eta(h) > \kappa(\Psi_{g_0}(\Lambda))/3$ for all $h \in U$.
\end{lemma}
    
    Let $m$ be given, with reference point $s_m \in D_m$. By Lemma \ref{lemma-discreteness-conjugation-stable} (Discreteneness is Conjugation-Stable), to each $g \in \text{SL}(3,\bbR)$, there exists a set $K_g$ with the following properties:
    \begin{enumerate}
        \item $K_g$ is compact and contains $g$ in its interior,
        \item Letting $\kappa_g$ denote $\inf\limits_{h \in K_g}(\kappa(\Psi_{h^{-1}}(\text{Aut}(\xi^3(t_0), C_{s_m})))) = \inf\limits_{h\in K_g}(\kappa(\text{Aut}(\xi^3(t_0), hC_{s_m}))),$ we have $\kappa_g > 0$,
        \item The map $K_g \times K_g \to \text{SL}(3,\bbR)$ given by $(h_1,h_2) \mapsto h_1h_2^{-1}$ has image contained in the ball $B_{\kappa_g/2}(e)$.
    \end{enumerate}
    
    Now let $\{ K_{g_i}^m\}$ be a countable cover of $\text{SL}(3,\mathbb{R})$ by such compact sets. Define $S_i^m \subset \partial \Gamma$ as $f_m^{-1}(K_{g_i}^m)$. We show:

    \textbf{Claim:} The restriction of $f_m$ to $S_i^m$ is uniformly continuous.

    \begin{proof}[Proof of Claim]
    Fix $\epsilon > 0$. We must exhibit that there is some $\delta > 0$ so that if $d_S(t, t') < \delta$ and $f_m(t), f_m(t') \in K_{g_i}^m$, then $d_P(f_m(t), f_m(t')) < \epsilon$.

    We first remark that the map $\overline{B_{\kappa_{g_i}/2}(e)} \times K_{g_i}^m \to \mathbb{R}$ given by $(A,h) \mapsto d_{\text{Haus}}(h\overline{C_{s_m}}, Ah\overline{C_{s_m}})$ is continuous and has zero set exactly $\{e\} \times K_{g_i}^m$ by construction of $\kappa_{g_i}$. It follows from compactness that there is an $\epsilon'> 0$ so that if $ h \in K_{g_i}^m$, $A \in \overline{B_{\kappa_{g_i}/2}(e)}$, and $d_{\text{Haus}}(h\overline{C_{s_m}}, Ah\overline{C_{s_m}}) < \epsilon'$, then $A \in B_\epsilon(e)$.

    As $\partial \Gamma$ is compact, the map $t \mapsto \overline{C_t}$ is uniformly continuous with respect to the Hausdorff topology on $\xi^3(t_0)$, hence there is a $\delta > 0$ so that if $d_S(t,t') < \delta$, then $d_{\text{Haus}}(\overline{C_t}, \overline{C_{t'}}) < \epsilon'$. So if $d_S(t,t') < \delta$ and $t,t' \in S_i^m$, we have $$\epsilon' > d_{\text{Haus}}(\overline{C_t}, \overline{C_{t'}}) = d_\text{Haus}( \overline{C_t}, f_m(t') f_m(t)^{-1} \overline{C_{t}}).$$ As $C_t = f_m(t) C_{s_m}$ with $f_m(t) \in K_{g_i}^m$ and $f_m(t') f_m(t)^{-1} \in B_{\kappa_{g_i}/2}(e)$, we have from our previous observation that $\epsilon > d_P(e, f_m(t') f_m(t)^{-1}) = d_P(f_m(t'), f_m(t))$ by right-invariance.
    \end{proof}

    The point of this claim to us is that for any $i$ and $m$, there exists a continuous extension $\widetilde{f}_i^m$ of $f_m|_{S_i^m}$ to $\overline{S_i^m}.$ So  $\widetilde{f}_i^m$ is a continuous projective equivalence of leaves over $\overline{S_i^m}$.

    Now, as $S_i^m$ cover $\partial \Gamma$ the collection $\{ \overline{S_i^m}\}$ is a countable cover of $\partial \Gamma$ by closed sets. So by the Baire category theorem at least one $\overline{S_i^m}$ has non-empty interior. For any such $i,m$, setting $\widetilde{f} = \widetilde{f}_i^m$ yields the desired continuous family of projective equivalences of leaves over an open set $U$.

    Having produced $\widetilde{f}$, we observe that in fact all $C_x$ for $x\in U$ are projectively equivalent. Since the action of $\Gamma$ on $\partial \Gamma$ is minimal and acts with North-South dynamics, it then follows that all $C_x$ $(x \in \partial \Gamma)$ are projectively equivalent.
\end{proof}

Using the action of $\Gamma$ on $\partial \Gamma$, we may enlarge the open sets where we have continuous families of projective equivalences.

\begin{corollary}[Enlarge Domains]\label{corollary-enlarge-domains}
    Suppose $\mathfrak{s}_\rho$ is constant and every leaf $\mathfrak{s}_\rho(x)$ has discrete automorphism group. Let $\gamma \in \Gamma - \{e\}$ have attracting and repelling fixed-points $\gamma^+, \gamma^- \in \partial \Gamma,$ respectively. Then there is a connected open set $U$ containing $\gamma^+$ and $\gamma^-$ and a continuous projective equivalence of leaves $f$ over $U$.
\end{corollary}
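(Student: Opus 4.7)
The plan is to realize continuous projective equivalences as sections of a natural principal bundle over $\partial \Gamma$ whose fiber is the discrete group $\text{Aut}(C_{s_0})$, and to use that such bundles trivialize over any contractible arc of $\partial \Gamma \cong S^1$, taking the arc to contain both $\gamma^+$ and $\gamma^-$.

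First, I spread the continuous section from Proposition \ref{key-reduction} around $\partial \Gamma$ via $\Gamma$-equivariance. Let $\widetilde{f}: U_0 \to \text{SL}(3,\bbR)$ be a continuous projective equivalence with reference point $s_0 \in U_0$. For each $\eta \in \Gamma$, the linear map $\rho(\eta)|_{\xi^3(x)}: \xi^3(x) \to \xi^3(\eta x)$, conjugated by the normalizations $N_{\cdot \to t_0}$, yields a family of projective equivalences $\psi_\eta(x): C_x \to C_{\eta x}$ depending continuously on $x$ (cf.\ Lemma \ref{lemma-leaf-maps-basics}). Then $f_\eta(t) := \psi_\eta(\eta^{-1} t)\, \widetilde{f}(\eta^{-1} t)$ is a continuous map $\eta U_0 \to \text{SL}(3,\bbR)$ with $f_\eta(t)\, C_{s_0} = C_t$. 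Minimality of the $\Gamma$-action on $\partial \Gamma$ then ensures $\{\eta U_0\}_{\eta \in \Gamma}$ is an open cover of $\partial \Gamma$.

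Next, I package these into a bundle. Set
\[
P := \{(x, A) \in \partial \Gamma \times \text{SL}(3,\bbR) : A\, C_{s_0} = C_x\}, \qquad \pi: P \to \partial \Gamma,
\]
whose fibers are right cosets of $\text{Aut}(C_{s_0})$, discrete by hypothesis. The $f_\eta$ supply continuous local sections of $\pi$ near every point of $\partial \Gamma$. These trivialize $\pi$ locally: for a continuous section $\sigma$ on an open $V$, the map $V \times \text{Aut}(C_{s_0}) \to \pi^{-1}(V)$, $(v,g) \mapsto \sigma(v) g$, has continuous inverse $A \mapsto (\pi(A), \sigma(\pi(A))^{-1} A)$, since the second coordinate is continuous into $\text{SL}(3,\bbR)$ and lands in the discrete subspace $\text{Aut}(C_{s_0})$. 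Thus $\pi$ is a principal $\text{Aut}(C_{s_0})$-bundle over $\partial \Gamma \cong S^1$ with discrete fiber, hence a covering space.

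Finally, since $\gamma^+ \neq \gamma^-$, pick any $p \in \partial \Gamma \setminus \{\gamma^+, \gamma^-\}$; the arc $U := \partial \Gamma \setminus \{p\}$ is connected, contractible, and contains both fixed points. The restriction $\pi^{-1}(U) \to U$ is a covering over a contractible base, hence trivial, and admits a continuous global section $\sigma: U \to \pi^{-1}(U)$. Choosing any $s' \in U$ and setting $f(t) := \sigma(t)\, \sigma(s')^{-1}$ gives a continuous map $U \to \text{SL}(3,\bbR)$ with $f(t)\, C_{s'} = C_t$, the required continuous projective equivalence of leaves. The main subtlety in the argument is the local triviality of $P$, which leans crucially on the discreteness hypothesis: it is what forces any two continuous local sections over a connected overlap to differ by a locally constant element of $\text{Aut}(C_{s_0})$, so that the bundle structure is coherent. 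Once that is in hand, the remainder is a routine topological gluing on $S^1$.
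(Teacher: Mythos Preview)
Your proof is correct and takes a genuinely different route from the paper's. The paper, after observing (as you do) that the continuous equivalence $\widetilde{f}$ over $U_0$ can be transported to $\eta U_0$ for any $\eta\in\Gamma$, simply invokes North--South dynamics: one picks $\delta\in\Gamma$ with $\delta^+\in U_0$ and $\delta^-\notin\{\gamma^+,\gamma^-\}$, and then for large $n$ the single translate $\delta^{-n}U_0$ is a connected open set containing both $\gamma^+$ and $\gamma^-$. Your argument instead packages the translated equivalences as local sections of the principal $\mathrm{Aut}(C_{s_0})$-bundle $P\to\partial\Gamma$, uses discreteness to see that $P$ is a covering space, and then trivializes over the contractible arc $\partial\Gamma\setminus\{p\}$. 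The paper's approach is more elementary (no bundle language, one group element does the job) and more in keeping with the hands-on style of the surrounding arguments. Your approach is more structural and in fact yields a slightly stronger statement---a continuous projective equivalence over $\partial\Gamma$ minus \emph{any} single point---and makes transparent exactly where discreteness is used (forcing transition functions to be locally constant). Both are short; neither has a real advantage for the application in Proposition~\ref{I-was-dumb}.
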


\begin{proof}
    Proposition \ref{key-reduction} (Modify to Continuity) produces an open set $U \subset \partial \Gamma$ and a continuous projective equivalence of leaves $\widetilde{f}$ over $U$. By equivariance of $\text{dev}$, for any $\eta \in \Gamma$ we have $$C_{\eta x} = N_{\eta x \to t_0}(\text{dev}(\eta x)) = N_{\eta x\to t_0}(\rho(\eta) \text{dev}(x)) = N_{\eta x \to t_0} (\rho(\eta)(N_{x \to t_0}^{-1}(C_x))).$$ So defining $f: \eta U \to \text{SL}(3,\bbR)$ by $$\eta x \mapsto N_{\eta x \to t_0} \circ \rho(\eta) \circ N_{x \to t_0}^{-1} \circ \widetilde{f}(x) $$ gives a continuous projective equivalence of leaves over $\eta U$. The corollary now follows from North-South dynamics of the action of $\Gamma$ on $\partial \Gamma$.
\end{proof}

\subsubsection{Boundary Models}\label{subsubsection-discrete-continuity-to-eigenvalues} Throughout this paragraph, we suppress uses of normalization maps $N_{x \to t_0}: \text{dev}(x) \to \xi^3(t_0)$ to make notation manageable. The goal of this paragraph is to prove the following claim.

\begin{proposition}[Modelling Constraints]\label{I-was-dumb}
Suppose that $\rho$ is a Hitchin representation, $\mathfrak{s}_\rho$ is constant, and $\mathfrak{s}_\rho(x)$ has discrete automorphism group for all $x \in \partial \Gamma$. Then for all $\gamma \in \Gamma - \{e\}$, {\rm{$$\text{model}_{\gamma^+}(\xi^2(\gamma^-) \cap \xi^3(\gamma^+)) = \text{model}_{\gamma^-} \xi^1(\gamma^-).$$}} 
\end{proposition}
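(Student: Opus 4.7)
The plan is to show that the continuous projective equivalence $f$ from Corollary \ref{corollary-enlarge-domains} conjugates the restrictions $\rho(\gamma)|_{\xi^3(\gamma^-)}$ and $\rho(\gamma)|_{\xi^3(\gamma^+)}$; once this is established, the proposition follows, because the two boundary points appearing in the statement are precisely the respective repelling fixed points of these proximal automorphisms of the (projectively equivalent) leaves $C_{\gamma^\pm}$, and $\alpha$-modelling is a projective invariant.

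To set up, apply Corollary \ref{corollary-enlarge-domains} to fix a connected open set $U \subset \partial\Gamma$ containing $\gamma^\pm$ with a continuous projective equivalence of leaves $f : U \to \text{SL}(3,\bbR)$ satisfying $f(t)C_{s_0} = C_t$ in the reference plane. On $V := U \cap \gamma^{-1}U$, introduce the transfer function
$$\sigma(t) := f(\gamma t)^{-1} \circ R(t) \circ f(t), \qquad R(t) := N_{\gamma t \to t_0} \circ \rho(\gamma) \circ N_{t \to t_0}^{-1}.$$
Both $f(\gamma t)$ and $R(t) \circ f(t)$ are projective equivalences $C_{s_0} \to C_{\gamma t}$, so $\sigma(t) \in \text{Aut}(C_{s_0})$. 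Thus $\sigma$ is continuous from $V$ into the discrete subgroup $\text{Aut}(C_{s_0}) \subset \text{SL}(3,\bbR)$, and hence locally constant on $V$.

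The main obstacle is to deduce $\sigma(\gamma^+) = \sigma(\gamma^-)$; this reduces to showing that a single connected component of $V$ contains both fixed points. By the North--South dynamics of $\gamma$ on $\partial\Gamma \cong S^1$, the complement $\partial\Gamma - U$ is an arc avoiding $\gamma^\pm$, lying in one of the two arcs into which $\gamma^\pm$ cut $\partial\Gamma$; the preimage $\gamma^{-1}(\partial\Gamma - U)$ is a shift of this arc toward $\gamma^-$, so $\gamma^{-1}(\partial\Gamma - U) \cap U$ is a small sub-arc approaching $\gamma^-$ from one side but not containing $\gamma^-$ itself. Removing this sub-arc from $U$ neither disconnects $U$ nor ejects $\gamma^-$, so $V$ is a connected arc containing $\gamma^\pm$. (If the connectedness fails for a given $U$, one may shrink $U$ to a smaller connected neighborhood, replace $\gamma$ by a large power $\gamma^n$, or iteratively extend $f$ via $\rho$-equivariance.) The resulting identity $\sigma(\gamma^+) = \sigma(\gamma^-)$ rearranges to $R(\gamma^+) = g \, R(\gamma^-) \, g^{-1}$ with $g := f(\gamma^+) f(\gamma^-)^{-1}$, which after unpacking normalizations realizes the projective equivalence $C_{\gamma^-} \to C_{\gamma^+}$.

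To finish, identify the fixed points explicitly. Arrange the eigenvalues of $\rho(\gamma)$ as $\lambda_1 > \lambda_2 > \lambda_3 > \lambda_4 > 0$. The restriction $\rho(\gamma)|_{\xi^3(\gamma^+)}$ has eigenvalues $\lambda_1, \lambda_2, \lambda_3$, and its repelling fixed point on $\partial C_{\gamma^+}$ is the $\lambda_3$-eigenline, which by hyperconvexity of the Fr\'enet curve is $\xi^3(\gamma^+) \cap \xi^2(\gamma^-)$; similarly $\rho(\gamma)|_{\xi^3(\gamma^-)}$ has repelling fixed point $\xi^1(\gamma^-) \in \partial C_{\gamma^-}$, its $\lambda_4$-eigenline. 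Conjugation by $g$ carries repelling fixed points to repelling fixed points, so $g$ sends $\xi^1(\gamma^-)$ to $\xi^2(\gamma^-) \cap \xi^3(\gamma^+)$. Since $\alpha$-modelling is invariant under projective equivalences, this yields the desired equality $\text{model}_{\gamma^-}\xi^1(\gamma^-) = \text{model}_{\gamma^+}(\xi^2(\gamma^-) \cap \xi^3(\gamma^+))$.
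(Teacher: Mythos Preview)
Your argument is essentially correct and takes a genuinely different route from the paper's. One small repair: the claim that $V=U\cap\gamma^{-1}U$ is a \emph{connected} arc is not always true. With $J:=\partial\Gamma-U$ a closed sub-arc of one component $I_1$ of $\partial\Gamma-\{\gamma^+,\gamma^-\}$, the shift $\gamma^{-1}J\subset I_1$ can be disjoint from $J$, so $V=\partial\Gamma-(J\cup\gamma^{-1}J)$ has two components. What you actually need, and what is immediate, is that $\gamma^+$ and $\gamma^-$ lie in the same component: the entire closed arc $\overline{I_2}$ on the other side is contained in $V$ and joins them. With that fix, your transfer function $\sigma$ is locally constant with values in the discrete group $\text{Aut}(C_{s_0})$ and $\sigma(\gamma^+)=\sigma(\gamma^-)$ follows.

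The paper proceeds differently: it only uses local constancy near $\gamma^+$ (the Commutativity Lemma), then tracks the single boundary point $\xi^2(\gamma^-)\cap\xi^3(\gamma^+)$ along an interval from $\gamma^+$ to $\gamma^-$ by a dynamical ``sliding'' argument, showing $f(\gamma^+,s)$ carries it to $\xi^2(\gamma^-)\cap\xi^3(s)$ for all $s$, and finally uses dual osculation in the limit $s\to\gamma^-$ to conclude $f(\gamma^-,\gamma^+)(\xi^1(\gamma^-))=\xi^2(\gamma^-)\cap\xi^3(\gamma^+)$. Your approach is cleaner and in fact proves more than the stated proposition: the conjugacy $R(\gamma^+)=g\,R(\gamma^-)\,g^{-1}$ forces the projective eigenvalue ratios $(\lambda_1:\lambda_2:\lambda_3)$ and $(\lambda_2:\lambda_3:\lambda_4)$ to coincide, i.e.\ $\ell_1-\ell_2=\ell_2-\ell_3=\ell_3-\ell_4$. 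That is already the ``Fuchsian-shaped'' constraint $\rho(\gamma)\sim\text{diag}(\lambda^3,\lambda,\lambda^{-1},\lambda^{-3})$, so combined with Proposition~\ref{eigenvalues-that-look-fuchsian-implies-fuchsian} it finishes the discrete case outright, bypassing both the $\alpha$-modelling Lemma~\ref{regularity-lemma} and the limit cone machinery in \S\ref{subsubsection-Limit-Cone}. The paper's route only yields the weaker quadratic constraint \eqref{eigenvalue-constraint-homogeneous-form} and then invokes Benoist's theorem; your conjugacy observation is the sharper statement hiding behind that argument.
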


A key input to our proof of Proposition \ref{I-was-dumb} is the following application of discreteness of automorphism groups of leaves, which allows us to determine the values of a continuous projective equivalence of leaves at specific points. It says that at specific points, continuous projective equivalences of leaves commute with $\rho$ in an appropriate sense.

\begin{lemma}[Commutativity Lemma]\label{lemma-equivalences-commute-well}
    Let $\gamma \in \Gamma - \{e\}$. If $f$ is a continuous projective equivalence of leaves over a connected open set $U$ containing $\gamma^+$ for some $\gamma \in \Gamma - \{e\}$, then for all $s \in U$ and {\rm{$p \in \overline{\text{dev}(\gamma^+)}$}}, we have $$\rho(\gamma)(p) =  [f(\gamma s, \gamma^+) \circ \rho(\gamma) \circ  f(\gamma^+, s)] (p)$$
\end{lemma}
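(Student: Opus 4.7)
The plan is to recognize both sides of the claimed identity as projective self-equivalences of $\text{dev}(\gamma^+)$ and exploit the discreteness of $\text{Aut}(\xi^3(t_0),\text{dev}(\gamma^+))$ together with continuity of $f$ to force them to agree.

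First, I would verify that everything restricts to self-maps of $\text{dev}(\gamma^+)$. Since $\gamma\gamma^+ = \gamma^+$ and $\text{dev}$ is $\rho$-equivariant, $\rho(\gamma)$ preserves $\text{dev}(\gamma^+)$. For the right-hand side, $f(\gamma^+, s)$ carries $\text{dev}(\gamma^+)$ onto $\text{dev}(s)$, then $\rho(\gamma)$ carries $\text{dev}(s)$ onto $\text{dev}(\gamma s)$ by equivariance, and finally $f(\gamma s, \gamma^+)$ returns to $\text{dev}(\gamma^+)$. Thus the map
\[
B(s) \;:=\; \rho(\gamma)^{-1} \circ f(\gamma s, \gamma^+) \circ \rho(\gamma) \circ f(\gamma^+, s)
\]
is a projective self-equivalence of $\text{dev}(\gamma^+)$, i.e.\ an element of the group $\text{Aut}(\xi^3(t_0),\text{dev}(\gamma^+))$, which is discrete by hypothesis.

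Next, I would exploit continuity. The pieces $f(\gamma^+, s)$ and $f(\gamma s,\gamma^+)$ depend continuously on $s$ wherever defined---on the open neighborhood $V := U \cap \gamma^{-1}U$ of $\gamma^+$---so $s \mapsto B(s)$ is a continuous $\mathrm{SL}(3,\mathbb{R})$-valued map on $V$ landing in the discrete group $\text{Aut}(\xi^3(t_0),\text{dev}(\gamma^+))$. Any such map is locally constant, hence constant on the connected component $V_0$ of $\gamma^+$ in $V$. To identify the constant, I would evaluate at $s = \gamma^+$: since $\gamma\gamma^+ = \gamma^+$ and $f(t,t) = \mathrm{id}$ (immediate from $f(t,t') = N_{t'\to t_0}^{-1}\circ f(t') \circ f(t)^{-1}\circ N_{t\to t_0}$), both outer factors collapse and $B(\gamma^+) = \rho(\gamma)^{-1}\circ \rho(\gamma) = \mathrm{id}$. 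Thus $B\equiv \mathrm{id}$ on $V_0$, which applied to $p \in \overline{\text{dev}(\gamma^+)}$ and rearranged yields the claim for $s \in V_0$.

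The main obstacle I anticipate is the bookkeeping needed to promote the conclusion from the component $V_0$ to all $s \in U$, since the right-hand side a priori requires $\gamma s \in U$. Here one propagates the identity using the $\Gamma$-equivariant extension of $f$ from Corollary \ref{corollary-enlarge-domains} and the connectedness of $U$, patching local constancy along paths between $\gamma^+$ and arbitrary $s\in U$. The essential rigidity---that a continuous family of projective self-equivalences of a discretely-symmetric domain must be constant---is entirely contained in the previous paragraph; the extension step is a routine consequence once this discrete-group rigidity is in hand.
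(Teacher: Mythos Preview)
Your proof is correct and follows essentially the same approach as the paper's: form the continuous family of projective self-equivalences of $\text{dev}(\gamma^+)$, invoke discreteness of $\text{Aut}(\xi^3(t_0),C_{\gamma^+})$ to force constancy, and evaluate at $s=\gamma^+$. Your caution about the domain constraint $\gamma s \in U$ is in fact more careful than the paper, which simply asserts the conclusion for all $s\in U$ without comment; in the sole application (Proposition~\ref{I-was-dumb}) the relevant $s$ lie on a $\gamma$-invariant arc between $\gamma^+$ and $\gamma^-$, so the issue never materializes.
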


\begin{proof}
The maps $\{A_s\}_{s \in U}$ given by \begin{align*} A_s : \text{dev}(\gamma^+) &\to \text{dev}(\gamma^+) \\ p &\mapsto [f(\gamma s, \gamma^+)\circ \rho(\gamma) \circ f(\gamma^+, s)](p)\end{align*} are a continuous family of projective equivalences of $\text{dev}(\gamma^+),$ and hence must be constant by discreteness of $\text{Aut}(\xi^3(t_0), C_{\gamma^+})$. At $s = \gamma^+$ we have $A_s = \rho(\gamma)$.
\end{proof}

\begin{figure}[t]\label{figure-two-slices-dynamics}
\vspace{-1cm}
\begin{center} \makebox[0pt]{\, \, \,\, \, \, \, \, \includegraphics[scale=0.23]{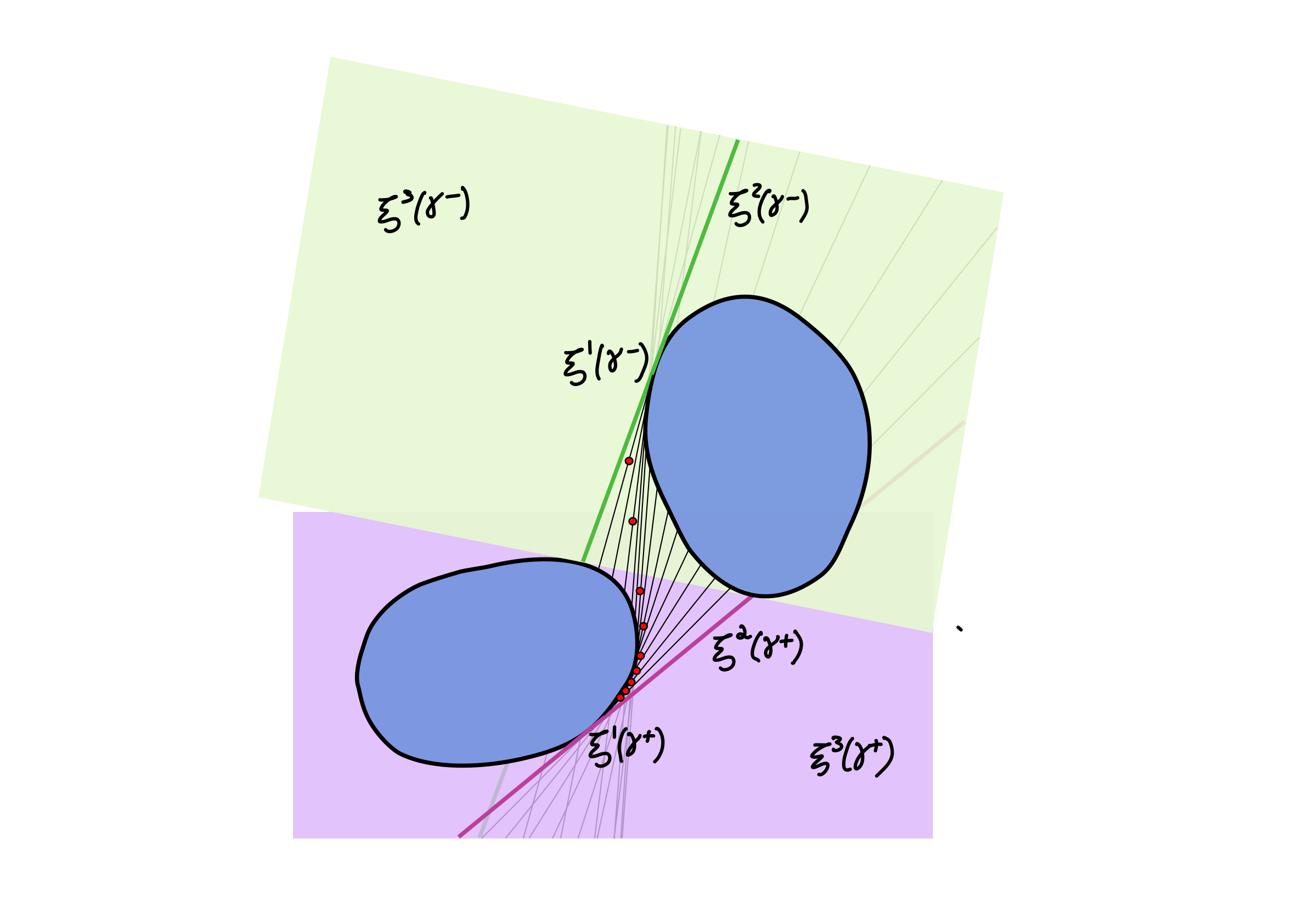}}
\end{center}\vspace{-1.2cm}
\caption{Sketch of the situation of Proposition \ref{I-was-dumb}. The red dots illustrate that if $p = f(\gamma^+,s)(\xi^2(\gamma^-) \cap \xi^3(\gamma^+))$ is not on $\xi^2(\gamma^-)$, then $\rho(\gamma)^np$ converges to $\xi^1(\gamma^+)$.}
\end{figure}

We are now prepared to prove Proposition \ref{I-was-dumb}.

\begin{proof}[Proof of Proposition \ref{I-was-dumb}]
    Let $\gamma \in \Gamma - \{e\}$ be given. By Corollary \ref{corollary-enlarge-domains} there is a connected open set $U$ containing $\gamma^+$ and $\gamma^-$ and a continuous projective equivalence of leaves $f$ over $U$. Let $I \subset \partial \Gamma$ be a closed interval with endpoints $\gamma^+, \gamma^-$. Our strategy is to constrain $f(\gamma^+, s)$ at $\xi^2(\gamma^{-}) \cap \xi^3(\gamma^+)$, then conclude an equality of modelling coefficients at controlled points.

    \textbf{Claim (Stuck to $\xi^2(\gamma^-)$).} For all $s \in I - \{\gamma^-\}$, we have $f(\gamma^+, s)(\xi^2(\gamma^-) \cap \xi^3(\gamma^+)) = \xi^2(\gamma^-) \cap \xi^3(s)$.

    \begin{proof}[Proof of Claim] We compute an auxiliary limit in two different ways. Fix $s \in I - \{\gamma^-\}$. By Lemma \ref{lemma-equivalences-commute-well}, for all $n \in \mathbb{N}$, we have $$ f(\gamma^n s, \gamma^+) \circ \rho(\gamma^n) \big[ f(\gamma^+, s)[(\xi^2(\gamma^{-} )\cap \xi^3(\gamma^+)]\big] = \rho(\gamma)^n [\xi^2(\gamma^{-}) \cap \xi^3(\gamma^+)] = \xi^2(\gamma^{-}) \cap \xi^3(\gamma^+),$$ so that $$\lim_{n \to \infty} f(\gamma^n s, \gamma^+) \circ \rho(\gamma^n) \circ f(\gamma^+, s)[(\xi^2(\gamma^{-} )\cap \xi^3(\gamma^+)] = \xi^2(\gamma^{-}) \cap \xi^3(\gamma^+).$$
    
    On the other hand, let $p \in \partial \text{dev}(s) - \{ \xi^2(\gamma^-) \cap \xi^3(s)\}$. Then $p = \xi^2(t') \cap \xi^3(s)$ for some $t' \neq \gamma^{-}$ or $p = \xi^1(s)$ with $s \neq \gamma^-$. In the first case, we then have (see Figure \ref{figure-two-slices-dynamics}) \begin{align*} \lim_{n \to \infty} \gamma^n s &= \gamma^+ \\ 
    \lim_{n \to \infty}  \rho(\gamma)^n p &= \lim_{n \to \infty} \xi^3(\gamma^n s) \cap \xi^2(\gamma^n t') = \xi^1(\gamma^+), \end{align*} where we have used North-South dynamics of the action of $\Gamma$ on $\partial \Gamma$ and dual osculation. In the second case, we similarly have $$\lim_{n \to \infty} \rho(\gamma)^n \xi^1(s) = \lim_{n \to \infty} \xi^1(\gamma^n s) = \xi^1 (\gamma^+).$$ So by continuity of $f$ and that $f(\gamma^+, \gamma^+)$ is the identity on $\xi^3(\gamma^+),$
    $$\lim_{n \to \infty} f(\gamma^n s, \gamma^+) \circ \rho(\gamma^n) (p) = \xi^1(\gamma^+).$$ 
    As $\xi^2(\gamma^-) \cap \xi^3(\gamma^+) \neq \xi^1(\gamma^+)$, the only possibility is that $f(\gamma^+, s)(\xi^2(\gamma^-) \cap \xi^3(\gamma^+)) = \xi^2(\gamma^-) \cap \xi^3(s)$.
\end{proof}

We next observe that $f(\gamma^-, \gamma^+)(\xi^1(\gamma^-)) = \xi^2(\gamma^-) \cap \xi^3(\gamma^+)$. To see this, note that by dual osculation, for any sequence $s_n \to \gamma^-$ with $s_n \neq \gamma^-$ for all $n$, we have $$\lim\limits_{n \to \infty} \xi^2(\gamma^-) \cap \xi^3(s_n) = \xi^1(\gamma^-).$$ Since $f$ is continuous and $f(s_n, \gamma^+)(\xi^2(\gamma^-) \cap \xi^3(s_n)) = \xi^2(\gamma^-) \cap \xi^3(\gamma^+)$ for all $n$, we must have $f(\gamma^-, \gamma^+)(\xi^1(\gamma^-)) = \xi^2(\gamma^-) \cap \xi^3(\gamma^+)$.
    
Since $f(\gamma^-, \gamma^+): \xi^3(\gamma^-) \to \xi^3(\gamma^+)$ is a projective equivalence sending $\xi^1(\gamma^-)$ to $\xi^2(\gamma^-) \cap \xi^3(\gamma^+)$, we conclude that $\text{model}_{\gamma^+}(\xi^2(\gamma^-) \cap \xi^3(\gamma^+)) = \text{model}_{\gamma^-} \xi^1(\gamma^-)$.
\end{proof}

\subsubsection{Zariski Density}\label{subsubsection-zariski-closures} 
In the following, we fix a lift of $\rho: \Gamma \to \text{PSL}(4,\bbR)$ to $\text{SL}(4,\bbR)$. Such lifts always exist, as mentioned in \S \ref{convex-projective-structures-background}. In this paragraph, we examine the contraints on eigenvalues of $\rho$ given by Proposition \ref{I-was-dumb} (Modelling Constraints) through Lemma \ref{regularity-lemma} (Models at Fixed Points), and prove a dichotomy for the Zariski closure of $\rho$.

We obtain eigenvalue data as follows. Under the hypotheses of Proposition \ref{I-was-dumb}, let $\gamma \in \Gamma - \{e\}$, write the eigenvalues of $\rho(\gamma)$ as $\lambda_1, \lambda_2, \lambda_3, \lambda_4$ ordered by decreasing modulus, and denote $\log |\lambda_i|$ by $\ell_i$ for $i=1,...,4$. As a consequence of $\rho$ being Hitchin, $\ell_1 > \ell_2 > \ell_3 > \ell_4$ and all $\lambda_i$ have the same sign. Denote the corresponding eigenlines by $e_1, e_2, e_3, e_4$. We have $e_1 = \xi^1(\gamma^+)$, $e_2 = \xi^2(\gamma^+) \cap\xi^3(\gamma^-)$, $e_3 = \xi^2(\gamma^-) \cap \xi^3(\gamma^+)$, $e_4 = \xi^1(\gamma^-)$ (see \cite{guichard2008convex} \S 5, in particular Fig. 7 there). Applying Lemma \ref{regularity-lemma} (Models at Fixed Points) to the restrictions of $\rho(\gamma)$ to the invariant subspaces $\xi^3(\gamma^+)$ and $\xi^3(\gamma^-)$ and the constraint $\text{model}_{\gamma^+} (\xi^2(\gamma^-) \cap \xi^3(\gamma^+) ) = \text{model}_{\gamma^-} \xi^1(\gamma^-)$ shows \begin{align}\label{eigenvalue-condition-from-regularity}
\frac{\ell_1 - \ell_3}{\ell_2 - \ell_3} = \frac{\ell_2 - \ell_4}{\ell_3 - \ell_4},
\end{align} or equivalently that
\begin{align}\label{eigenvalue-constraint-homogeneous-form}
(\ell_1 -\ell_3)(\ell_3 - \ell_4) - (\ell_2 - \ell_4)(\ell_2 - \ell_3) = 0.
\end{align}

\begin{remark}
    The homogeneity of Equation \ref{eigenvalue-constraint-homogeneous-form} is responsible for much of the usefulness of this constraint. It is expected since the points where models are computed for $\gamma$ and $\gamma^n$ ($n \in \mathbb{N}$) are the same.
\end{remark} 

\begin{remark}
    One may also apply the same argument to $\gamma^{-1}$ in place of $\gamma$, which establishes an equality of modelling coefficients between two different points than the argument for $\gamma$. The equation so obtained appears distinct from Equation \ref{eigenvalue-condition-from-regularity} at a glance, but the two may be shown to be equivalent. So this offers no new information.
\end{remark}

\begin{figure}
    \centering
    \includegraphics[scale=0.66]{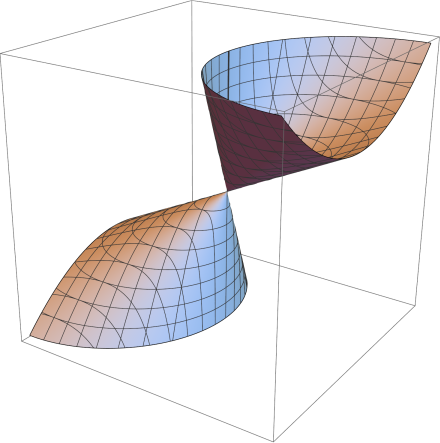}
    \caption{The zero locus of Equation \ref{eigenvalue-constraint-homogeneous-form}. Image generated by Wolfram Mathematica.}
    \label{zero-locus-picture}
\end{figure}

Zariski closures of Hitchin representations have been classified.\footnote{The classification is due to Guichard in unpublished work, and also follows from recent results of Sambarino \cite{sambarino2020infinitesimal}.} For a lift of $\rho$ in the $\text{PSL}(4,\bbR)$ Hitchin component to $\text{SL}(4,\bbR)$, the classification states that the Zariski closure of $\rho(\Gamma)$ is conjugate to a principal $\text{SL}(2,\bbR)$ (in which case $\rho$ is Fuchsian), is conjugate to $\text{Sp}(4,\bbR)$, or is $\text{SL}(4,\bbR)$. We shall show that $\rho$ is Fuchsian through this condition.

We begin by showing that the Zariski closure of $\rho(\Gamma)$ is not conjugate to  $\text{Sp}(4,\bbR)$. The linear algebra behind this case is contained in the next lemma.

\begin{lemma}[Diagonal Form]\label{symplectic-matrix-case}
    Suppose that {\rm{$A \in \text{SL}(4, \mathbb{R})$}} is diagonalizable, with real eigenvalues $(\lambda_1, \lambda_2, \lambda_3,\lambda_4)$ with $|\lambda_1| > |\lambda_2| > |\lambda_3| > |\lambda_4| > 0$, satisfies Equation \ref{eigenvalue-condition-from-regularity}, and $A$ is conjugate to a matrix in {\rm{$\text{Sp}(4,\bbR)$}}. Then $A$ is conjugate to a matrix of the form {\rm$\text{diag}(\lambda^3, \lambda, \lambda^{-1},\lambda^{-3})$} for some $\lambda \in \mathbb{R} - [-1,1]$.
\end{lemma}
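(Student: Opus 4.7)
My approach is to pin down the eigenvalues of $A$ completely by combining two inputs: (i) the symplectic pairing structure forces the multiset of eigenvalues to be closed under $\lambda \mapsto \lambda^{-1}$, and (ii) the modelling equation (\ref{eigenvalue-condition-from-regularity}) is a single relation on the logarithms of the absolute values $\ell_i$. Together these should determine the ratios of the $\ell_i$ up to an overall scale.

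\textbf{Step 1 (Symplectic pairing).} Since $A$ is conjugate to a matrix in $\text{Sp}(4,\bbR)$, its spectrum is invariant under $\lambda \mapsto \lambda^{-1}$. Because the $\lambda_i$ are real, distinct in modulus, and nonzero, the only pairing compatible with the strict modulus ordering $|\lambda_1| > |\lambda_2| > |\lambda_3| > |\lambda_4|$ is $\lambda_4 = \lambda_1^{-1}$ and $\lambda_3 = \lambda_2^{-1}$. Note also that the pair $\{\lambda, \lambda^{-1}\}$ shares sign (the reciprocal of a real number has the sign of that number), so $\lambda_1$ and $\lambda_4$ share sign, and $\lambda_2$ and $\lambda_3$ share sign.

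\textbf{Step 2 (Reduce the modelling equation).} Passing to logarithms of absolute values, Step 1 gives $\ell_4 = -\ell_1$ and $\ell_3 = -\ell_2$. The ordering $\ell_1 > \ell_2 > \ell_3 > \ell_4$ combined with these equalities forces $\ell_2 > 0$ (hence $|\lambda_2|>1$) and $\ell_1 > \ell_2$. Substituting into (\ref{eigenvalue-condition-from-regularity}) yields
\[
\frac{\ell_1 + \ell_2}{2\ell_2} \;=\; \frac{\ell_1 + \ell_2}{\ell_1 - \ell_2}.
\]
Since $\ell_1 + \ell_2 > 0$, cancelling gives $\ell_1 - \ell_2 = 2\ell_2$, i.e.\ $\ell_1 = 3\ell_2$. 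Thus $|\lambda_1| = |\lambda_2|^3$ and $|\lambda_4| = |\lambda_2|^{-3}$.

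\textbf{Step 3 (Fix signs and conjugate).} Set $\lambda := \lambda_2$, so $|\lambda| > 1$, which means $\lambda \in \bbR - [-1,1]$. From $\lambda_3 = \lambda_2^{-1}$ we get $\lambda_3 = \lambda^{-1}$. The only remaining point is that $\lambda_1$ and $\lambda_4$ have absolute values $|\lambda|^3$ and $|\lambda|^{-3}$ and share sign with each other. From the symplectic pairing $\lambda_1\lambda_4 = 1$ and $|\lambda_1\lambda_4| = 1$, their common sign must be positive, so $\lambda_1 = \pm|\lambda|^3$ with $\lambda_4 = \pm|\lambda|^{-3}$ the same sign. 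A short case check—handling $\lambda > 1$ and $\lambda < -1$ separately and using that the sign of $\lambda^3$ agrees with the sign of $\lambda$—shows that in both cases $\lambda_1 = \lambda^3$ and $\lambda_4 = \lambda^{-3}$. Since $A$ is diagonalizable with the four distinct real eigenvalues $(\lambda^3, \lambda, \lambda^{-1}, \lambda^{-3})$, it is conjugate to $\operatorname{diag}(\lambda^3, \lambda, \lambda^{-1}, \lambda^{-3})$.

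\textbf{Expected difficulty.} There is no real obstacle here: once the symplectic reciprocal pairing is used to halve the number of free parameters in $(\ell_1, \ell_2, \ell_3, \ell_4)$, the modelling equation is a linear relation in $(\ell_1, \ell_2)$ that rigidifies the spectrum entirely. The only minor care needed is in Step 3, to rule out spurious sign choices coming from the fact that eigenvalues are only assumed real (not positive); the constraint $\lambda_1 \lambda_4 = 1$ together with the shared-sign condition handles this cleanly.
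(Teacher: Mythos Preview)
Your Steps 1 and 2 are correct and amount to the same argument as the paper's. The paper phrases it via the gap variables $\alpha=\ell_1-\ell_2$, $\beta=\ell_2-\ell_3$, $\gamma=\ell_3-\ell_4$: the symplectic pairing gives $\alpha=\gamma$, equation~(\ref{eigenvalue-condition-from-regularity}) gives $\beta^2=\alpha\gamma$, hence $\alpha=\beta=\gamma$; your direct substitution reaches the equivalent conclusion $\ell_1=3\ell_2$.

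Your Step 3, however, contains a real error. The ``short case check'' you describe cannot force $\lambda_1=\lambda^3$ with $\lambda=\lambda_2$, because nothing in the stated hypotheses ties the sign of $\lambda_1$ to the sign of $\lambda_2$. Concretely, $A=\operatorname{diag}(-8,\,2,\,1/2,\,-1/8)$ lies in $\mathrm{SL}(4,\bbR)$, has distinct real eigenvalues with strictly decreasing moduli, preserves a symplectic form (pair the $-8$ and $-1/8$ eigenlines, and the $2$ and $1/2$ eigenlines), and satisfies equation~(\ref{eigenvalue-condition-from-regularity}); yet it is not conjugate to $\operatorname{diag}(\lambda^3,\lambda,\lambda^{-1},\lambda^{-3})$ for any real $\lambda$. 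So the lemma as literally stated is slightly false, and your sign argument is attempting to prove something that is not true in this generality. The paper's own proof works entirely with the $\ell_i$ and simply does not engage with signs. What rescues the application is the remark, made in the paragraph immediately preceding the lemma, that for Hitchin $\rho$ all eigenvalues of $\rho(\gamma)$ share a common sign; under that extra hypothesis your Step 3 becomes a one-line observation.
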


\begin{proof}
    This is a computation. Write $\alpha = \ell_1 - \ell_2, \beta = \ell_2 - \ell_3, \gamma = \ell_3 - \ell_4$. Then $\alpha, \beta, \gamma > 0$, and Equation \ref{eigenvalue-condition-from-regularity} is equivalent to $(\alpha + \beta)\gamma = (\beta + \gamma)\beta$, which reduces to $\beta^2 = \alpha \gamma$.
    
    Eigenvalues of semisimple symplectic matricies $A$ come in inverse pairs, i.e. if $\lambda$ is an eigenvalue of $A$ with multiplicity $m$, then $1/\lambda$ is also an eigenvalue with multiplicity $m$. For us, this means that $\ell_1 - \ell_2 = \ell_3 - \ell_4$, so that $\alpha^2 = \beta^2 = \gamma^2$, and by positivity $\alpha = \beta = \gamma$. That $A \in \text{SL}(4, \mathbb{R})$ is to say $\ell_1 + \ell_2 + \ell_3 + \ell_4 = 0$, which implies the claim.
\end{proof}

Any Fuchsian representation $\rho$ has, up to negation, $\rho(\gamma)$ ($\gamma \in \Gamma)$ conjugate to a matrix of the form $\text{diag}(\lambda^3, \lambda, \lambda^{-1}, \lambda^{-3})$ for some $\lambda > 1$. We next show that this property distinguishes Fuchsian representations. In particular, it is not possible for a non-Fuchsian representation to take values in a collection of distinct principal $\text{SL}(2, \mathbb{R})$ subgroups of $\text{SL}(4, \mathbb{R})$.

\begin{proposition}[Fuchsian from Eigenvalues]\label{eigenvalues-that-look-fuchsian-implies-fuchsian}
    Suppose that $\rho$ is lift of a {\rm{$\text{PSL}(4,\bbR)$}} Hitchin representation to {\rm{$\text{SL}(4,\bbR)$}} so that for all $\gamma \in \Gamma$, $\rho(\gamma)$ is conjugate to a matrix of the form $\text{diag}(\lambda^3, \lambda, \lambda^{-1}, \lambda^{-3})$ for some positive $\lambda = \lambda(\gamma) \in \mathbb{R} - \{0\}$. Then $\rho$ is Fuchsian.
\end{proposition}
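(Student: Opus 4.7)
The plan is to use the classification of Zariski closures of Hitchin representations quoted just before Lemma \ref{symplectic-matrix-case}: the Zariski closure of $\rho(\Gamma)$ in $\text{SL}(4,\bbR)$ is either conjugate to the principal $\text{SL}(2,\bbR)$ (equivalent to $\rho$ being Fuchsian), conjugate to $\text{Sp}(4,\bbR)$, or all of $\text{SL}(4,\bbR)$. I would rule out the latter two possibilities by showing the eigenvalue hypothesis forces $\rho(\Gamma)$ into a proper Zariski-closed subvariety of each.

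First I would write down the polynomial constraint satisfied by every $\rho(\gamma)$. If the eigenvalues are $(\lambda^3, \lambda, \lambda^{-1}, \lambda^{-3})$ and $s_k(A)$ denotes the $k$-th elementary symmetric function of the eigenvalues (a regular function on $\text{SL}(4,\bbR)$), then setting $a = \lambda + \lambda^{-1}$ yields $s_1 = a^3 - 2a$ and $s_2 = a^4 - 3a^2 + 2$, while automatically $s_3 = s_1$ and $s_4 = 1$. Eliminating the auxiliary parameter $a$ gives a nontrivial polynomial relation $P(s_1, s_2) = 0$, and the constraint $s_1 = s_3$ is a second polynomial relation. Consequently, $\rho(\Gamma)$ is contained in the Zariski-closed set $V = \{A \in \text{SL}(4,\bbR) : P(s_1(A), s_2(A)) = 0 \text{ and } s_1(A) = s_3(A)\}$.

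Next I would verify that $V$ is a proper subvariety of both $\text{Sp}(4,\bbR)$ and $\text{SL}(4,\bbR)$. For $\text{SL}(4,\bbR)$ this is obvious since $V$ meets the regular semisimple locus in a strictly smaller-dimensional piece (a generic diagonal matrix such as $\text{diag}(4,3,2,1/24)$ violates $P$). For $\text{Sp}(4,\bbR)$, regular semisimple elements already have eigenvalues of the form $(\mu_1, \mu_2, \mu_2^{-1}, \mu_1^{-1})$, giving a two-parameter family of conjugacy classes, whereas $V$ picks out the one-parameter subfamily where $\mu_1 = \mu_2^3$. Exhibiting any explicit symplectic matrix outside this subfamily (e.g. conjugate to $\text{diag}(4,2,1/2,1/4)$) shows $V \cap \text{Sp}(4,\bbR)$ is proper. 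Since $V$ is Zariski-closed and $\rho(\Gamma) \subset V$, the Zariski closure of $\rho(\Gamma)$ is contained in $V$ and therefore cannot be all of $\text{Sp}(4,\bbR)$ or $\text{SL}(4,\bbR)$. The classification then forces the closure to be a principal $\text{SL}(2,\bbR)$, so $\rho$ is Fuchsian.

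I expect the only subtle point is verifying that the two polynomial constraints, taken together, genuinely cut out a proper subvariety of $\text{Sp}(4,\bbR)$ rather than vanishing identically there; this reduces to the elementary check that the map $(\mu_1,\mu_2) \mapsto (s_1,s_2)$ on symplectic diagonals is dominant, which can be handled by computing its Jacobian at a generic point or by exhibiting an explicit element as above.
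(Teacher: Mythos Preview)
Your proposal is correct and follows the same overall strategy as the paper: use the classification of Zariski closures, exhibit a conjugation-invariant polynomial on $\text{SL}(4,\bbR)$ that vanishes on every matrix with eigenvalues $(\lambda^3,\lambda,\lambda^{-1},\lambda^{-3})$, and check it does not vanish on an explicit element of $\text{Sp}(4,\bbR)$. The only difference is the choice of polynomial. The paper writes down the symmetric function $F(a_1,a_2,a_3,a_4)=\prod_{i,j}(a_i-a_j^3)$ and invokes the fundamental theorem of symmetric polynomials to see it is a polynomial $G$ in the matrix entries; then it checks $F(3,2,1/2,1/3)\neq 0$. Your route via eliminating the parameter $a$ from $s_1=a^3-2a$, $s_2=a^4-3a^2+2$ produces an equally valid witness $P(s_1,s_2)$ (note $s_1=s_3$ holds identically on $\text{Sp}(4,\bbR)$, so only $P$ does work there, exactly as you anticipate). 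The paper's $F$ has the minor advantage that its vanishing on the special form and its nonvanishing at the test point are both visible by inspection, whereas your $P$ requires a resultant computation; conversely, your approach makes the ``one-parameter family inside a two-parameter family'' picture more transparent.
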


\begin{remark}
A shorter proof of the below is possible using the theorem of Benoist described in the next paragraph. The below proof is included due to its explicitness and its lack of direct reliance on such heavy machinery: in place of of Benoist's limit cone theorem, it uses the fundamental theorem of symmetric polynomials.
\end{remark}

\begin{proof}
     By the classification of Zariski closures of Hitchin representations \cite{sambarino2020infinitesimal}, it suffices to show that the Zariski closure of $\rho(\Gamma)$ is neither $\text{SL}(4,\mathbb{R})$ nor conjugate to $\text{Sp}(4, \mathbb{R})$.
     
    We begin by recalling that if $a_1, ..., a_4$ are the eigenvalues of $A \in \text{GL}(4,\mathbb{R})$, then the coefficients $\sigma_i$ ($i = 0,...,3$) of the characteristic polynomial of $A$ are the elementary symmetric polynomials in the variables $a_1,...,a_4$, and are all polynomials in the entries of $A$. So let $F(a_1,a_2,a_3,a_4) = \prod_{i,j \in \{1,...,4\}} (a_i - a_j^3)$. Then $F$ is a symmetric polynomial in $\{a_1,...,a_4\}$, and so is an element of the polynomial ring $\mathbb{Z}[\sigma_0,...,\sigma_3]$ by the fundamental theorem of symmetric polynomials. Consequently, $F$ is a polynomial $G$ in the entries of $A$. As all $\sigma_i$ are conjugation-invariant, so is $G$.

    Note, furthermore, that if $A$ is conjugate to a matrix of the form $\text{diag}(\lambda^3, \lambda, \lambda^{-1}, \lambda^{-3})$, then $F(\lambda^3, \lambda, \lambda^{-1}, \lambda^{-3})$ vanishes. So for a Hitchin representation $\rho$ satisfying our hypotheses, the Zariski closure of $\rho(\Gamma)$ is contained in the vanishing locus of $G$.

    On the other hand, for instance, the symplectic matrix $A = \text{diag}(3, 2, 1/2, 1/3) \in \text{Sp}(4, \mathbb{R})$ is not in the vanishing locus of $G$, as $F(3,2,1/2,1/3) \neq 0$. As $G$ is conjugation-invariant, this shows that the Zariski closure of $\rho(\Gamma)$ cannot contain any subgroup of $\text{SL}(4, \mathbb{R})$ conjugate to $\text{Sp}(4, \mathbb{R})$, which gives the claim. \end{proof}

We immediately obtain:

\begin{corollary}[Zariski Closure Dichotomy]\label{proposition-zariski-dichotomy}
    Suppose that for all $\gamma \in \Gamma - \{e\}$, we have {\rm{$\text{model}_{\gamma^+} (\xi^2(\gamma^-) \cap \xi^3(\gamma^+) ) = \text{model}_{\gamma^-} \xi^1(\gamma^-)$}}. Then $\rho$ is $4$-Fuchsian or Zariski dense.   
\end{corollary}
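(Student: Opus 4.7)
The plan is to chain together the three preceding results — Lemma \ref{regularity-lemma} (Models at Fixed Points), Lemma \ref{symplectic-matrix-case} (Diagonal Form), and Proposition \ref{eigenvalues-that-look-fuchsian-implies-fuchsian} (Fuchsian from Eigenvalues) — using the classification of Zariski closures of $\text{PSL}(4,\bbR)$ Hitchin representations as the switch that produces the dichotomy. The classification says that for a lift of $\rho$ to $\text{SL}(4,\bbR)$, the Zariski closure of $\rho(\Gamma)$ is either a principal $\text{SL}(2,\bbR)$, conjugate to $\text{Sp}(4,\bbR)$, or all of $\text{SL}(4,\bbR)$. The first option is Fuchsian and the third is Zariski dense, so the only thing to do is to eliminate the middle option unless $\rho$ is already Fuchsian.

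First I would translate the modelling hypothesis into an eigenvalue identity. For each $\gamma \in \Gamma - \{e\}$, applying Lemma \ref{regularity-lemma} to the restrictions of $\rho(\gamma)$ to the invariant planes $\xi^3(\gamma^+)$ and $\xi^3(\gamma^-)$ converts the equality $\text{model}_{\gamma^+}(\xi^2(\gamma^-)\cap\xi^3(\gamma^+)) = \text{model}_{\gamma^-}\xi^1(\gamma^-)$ into Equation \ref{eigenvalue-condition-from-regularity} on the log-moduli $\ell_1 > \ell_2 > \ell_3 > \ell_4$ of the eigenvalues of $\rho(\gamma)$. This is precisely the derivation already carried out in the paragraph preceding the statement, so it can be invoked directly.

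Next, suppose that $\rho$ is not Zariski dense. Then by the classification the Zariski closure of $\rho(\Gamma)$ sits inside a conjugate of $\text{Sp}(4,\bbR)$ (since the principal $\text{SL}(2,\bbR)$ is contained in $\text{Sp}(4,\bbR)$, this handles both remaining cases simultaneously). Each $\rho(\gamma)$ is therefore conjugate to an element of $\text{Sp}(4,\bbR)$, and as $\rho$ is Hitchin it has four distinct positive real eigenvalues. Lemma \ref{symplectic-matrix-case} then produces a positive real $\lambda = \lambda(\gamma)$ with $\rho(\gamma)$ conjugate to $\text{diag}(\lambda^3, \lambda, \lambda^{-1}, \lambda^{-3})$, and since this holds for every $\gamma$, Proposition \ref{eigenvalues-that-look-fuchsian-implies-fuchsian} forces $\rho$ to be Fuchsian. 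The argument is essentially pure assembly — all nontrivial content lives in the preceding lemmas and the earlier derivation of Equation \ref{eigenvalue-condition-from-regularity} — so I expect no real obstacle beyond cleanly citing each ingredient.
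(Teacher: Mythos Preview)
Your proposal is correct and matches the paper's approach exactly: the paper's proof is a one-line citation of Lemma \ref{symplectic-matrix-case}, Proposition \ref{eigenvalues-that-look-fuchsian-implies-fuchsian}, and the classification of Zariski closures, with the derivation of Equation \ref{eigenvalue-condition-from-regularity} from the modelling hypothesis already done in the preceding paragraph, just as you describe. Your observation that the principal $\text{SL}(2,\bbR)$ case is subsumed by the symplectic case is a harmless organizational choice that the paper leaves implicit.
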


\begin{proof}
    Combine Lemma \ref{symplectic-matrix-case} and Proposition \ref{eigenvalues-that-look-fuchsian-implies-fuchsian} and the classification of Zariski closures of Hitchin representations.
\end{proof}

\subsubsection{Limit Cones}\label{subsubsection-Limit-Cone} We finish the discrete case here by showing:

\begin{proposition}[Zariski Density Impossible]\label{proposition-no-zariski-density}
    Suppose that $\mathfrak{s}_\rho$ is constant and $\mathfrak{s}_\rho(x)$ has discrete automorphism group. Then $\rho(\Gamma)$ can not be Zariski-dense in {\rm{$\text{SL}(4,\bbR)$}}.
\end{proposition}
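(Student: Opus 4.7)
The plan is to combine the eigenvalue constraint arising from Proposition \ref{I-was-dumb} (Modelling Constraints) with Benoist's limit cone theorem to derive a contradiction with Zariski density. Under the hypotheses, Proposition \ref{I-was-dumb} applies to every $\gamma \in \Gamma - \{e\}$, so the log-eigenvalues of each $\rho(\gamma)$ satisfy the homogeneous polynomial relation
\begin{equation}\label{eq-poly-constraint-in-proof-plan}
(\ell_1 - \ell_3)(\ell_3 - \ell_4) - (\ell_2 - \ell_4)(\ell_2 - \ell_3) = 0.
\end{equation}
The key observation is that \eqref{eq-poly-constraint-in-proof-plan} cuts out a proper real-algebraic subvariety $V$ in the standard Cartan subalgebra $\mathfrak{a} \subset \mathfrak{sl}(4,\bbR)$, and $V$ is invariant under positive scaling since the polynomial is homogeneous.

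Next I would recall Benoist's limit cone theorem: for any Zariski-dense subgroup $H$ of a connected reductive real algebraic group $G$, the limit cone $\mathcal{L}(H) \subset \mathfrak{a}^+$, defined as the closure of the set of positive multiples of Jordan projections of elements of $H$, is a convex cone with nonempty interior in $\mathfrak{a}^+$. In our setting, if $\rho(\Gamma)$ were Zariski-dense in $\mathrm{SL}(4,\bbR)$, then the Jordan projection of every $\rho(\gamma)$ equals $(\ell_1,\ell_2,\ell_3,\ell_4)$ ordered decreasingly, and by \eqref{eq-poly-constraint-in-proof-plan} these projections all lie in $V \cap \mathfrak{a}^+$. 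Because $V$ is closed and invariant under positive dilation, every positive ray through a Jordan projection lies in $V$, so $\mathcal{L}(\rho(\Gamma)) \subset V$.

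The final step is to verify that $V$ has empty interior in $\mathfrak{a}^+$, which contradicts Benoist's theorem and rules out Zariski density. This is immediate because the polynomial on the left-hand side of \eqref{eq-poly-constraint-in-proof-plan} is a nonzero polynomial on the $3$-dimensional space $\mathfrak{a} \cong \{\ell_1+\ell_2+\ell_3+\ell_4 = 0\}$; one can exhibit, for example, a diagonal matrix with $\ell_i$ in arithmetic progression where the left-hand side does not vanish, showing the polynomial is not identically zero on $\mathfrak{a}^+$, so $V \cap \mathfrak{a}^+$ has empty interior.

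The main obstacle, and the reason this step is more than bookkeeping, is in correctly invoking Benoist's limit cone theorem: one needs the log-eigenvalue vectors to be precisely the Jordan projections of Zariski-dense elements (which is standard for $\mathrm{SL}(n,\bbR)$, as the Jordan projection is the diagonal of the real Jordan form), and one needs to know that the constraint \eqref{eq-poly-constraint-in-proof-plan} holds uniformly for all $\gamma \in \Gamma - \{e\}$, not merely generically; this uniform applicability is exactly what Proposition \ref{I-was-dumb} provides. Once these two pieces are in place, the dimensional incompatibility between a codimension-one algebraic subvariety and a cone with nonempty interior finishes the argument.
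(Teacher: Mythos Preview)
Your proposal is correct and follows essentially the same approach as the paper: derive the homogeneous polynomial constraint from Proposition \ref{I-was-dumb}, observe its zero set is a closed cone of empty interior in $\mathfrak{a}^+$, and contradict Benoist's limit cone theorem.

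One slip worth flagging: your proposed witness that the polynomial is not identically zero is wrong. If the $\ell_i$ are in arithmetic progression then $(\ell_1-\ell_3)(\ell_3-\ell_4) - (\ell_2-\ell_4)(\ell_2-\ell_3) = 2d\cdot d - 2d\cdot d = 0$, so the polynomial \emph{vanishes} there (indeed, this is precisely the Fuchsian ray). Any point off that locus works instead, e.g.\ $(\ell_1,\ell_2,\ell_3,\ell_4)=(4,0,-1,-3)$ gives $5\cdot 2 - 3\cdot 1 = 7 \neq 0$.
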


The source of our obstruction is an incompatibility of the eigenvalues of $\rho$ with Zariski density. The perspective we take to demonstrate the incompatibility is to analyze the \textit{limit cone} $\ell_{\rho(\Gamma)}$, which has been studied for Zariski-dense representations by Benoist. We begin by recalling the definition of limit cones and Benoist's theorem. The relevant theory has been developed for connected real reductive linear semisimple Lie groups $G$, but we shall deal exclusively with the cases $G = \text{SL}(4,\bbR)$ and $\text{Sp}(4,\bbR)$.

Let $H$ be a Cartan subgroup of $\text{SL}(4,\bbR)$, e.g. the diagonal matricies of determinant $1$ with respect to a choice of basis of $\bbR^4$, and $\mathfrak{a}_{\mathfrak{sl}(4,\bbR)}$ the corresponding Cartan subalgebra of $\mathfrak{sl}(4,\bbR)$. We identify $\mathfrak{a}_{\mathfrak{sl}(4,\bbR)}$ with the hyperplane $$\{ (x_1, x_2, x_3, x_4) \in \bbR^4 \mid x_1 + x_2 + x_3 + x_4 = 0 \} $$
and take the closed Weyl chamber $\mathfrak{a}_{\mathfrak{sl}(4,\bbR)}^+ \subset \mathfrak{a}_{\mathfrak{sl}(4,\bbR)}$ given by $$\{(x_1, x_2, x_3,x_4) \in \mathfrak{a}_{\mathfrak{sl}(4,\bbR)} \mid x_1 \geq x_2 \geq x_3 \geq x_4 \}.$$

Let $H_{\text{Sp}} < H$ be a Cartan subgroup of $\text{Sp}(4,\bbR)$, e.g. the elements of $H$ preserving the standard symplectic form, with corresponding Cartan subalgebra $\mathfrak{a}_{\mathfrak{sp}(4,\bbR)}$ identified with the elements $(x_1, x_2, x_3, x_4) \in \mathfrak{a}_{\mathfrak{sl}(4,\bbR)}$ with $x_1 + x_4 = x_2 + x_3 = 0$.

For $A \in \text{SL}(4,\bbR)$, and $i =1,2,3,4$ denote by $\lambda_i(A)$ the generalized eigenvalue of $A$ with $i^{\text{th}}$ largest modulus. We define \begin{align*}
    \Lambda: \text{SL}(4,\bbR) &\to \mathfrak{a}^+_{\mathfrak{sl}(4,\bbR)} \\ A &\mapsto (\log |\lambda_1(A)|, \log |\lambda_2(A)|, \log |\lambda_3(A)|, \log |\lambda_4(A)|).
\end{align*} 

\begin{definition}
    Given a subgroup {\rm{$H < \text{SL}(4,\bbR)$}} (resp. {\rm{$\text{Sp}(4,\bbR)$}}), the {\rm{{limit cone}}} $\ell_H$ of H is the smallest closed cone in $\mathfrak{a}_{\mathfrak{sl}(4,\bbR)}^+$ (resp. $\mathfrak{a}_{\mathfrak{sp}(4,\bbR)}^+$) containing $\Lambda(H)$.
\end{definition} For us, $\ell_{\rho(\Gamma)}$ is the closure of the half-lines spanned by $(\ell_1, \ell_2, \ell_3, \ell_4)$ in the notation of the previous section. The following is due to Benoist:
\begin{theorem}[Benoist \cite{benoist1997proprietes}]\label{benoist-limit-cone-theorem}
    Suppose {\rm{$H < \text{SL}(4,\bbR)$}} (resp. {\rm{$H < \text{Sp}(4,\bbR)$}}) is Zariski dense. Then $\ell_H$ is a convex cone with nonempty interior in $\mathfrak{a}_{\mathfrak{sl}(4,\bbR)}^+$ (resp. $\mathfrak{a}_{\mathfrak{sp}(4,\bbR)}^+$).
\end{theorem}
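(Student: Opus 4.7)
The plan is to prove Benoist's limit cone theorem by establishing two claims: (i) the limit cone $\ell_H$ is convex, and (ii) it has nonempty interior. Both rest on a single key mechanism --- Zariski density lets us combine loxodromic elements so that the Jordan projection of a product behaves like a positive linear combination of the individual Jordan projections. I will work inside $G = \text{SL}(4,\bbR)$ or $\text{Sp}(4,\bbR)$ simultaneously since the arguments are parallel; the only thing that changes is which algebra $\mathfrak{a}$ and which Weyl chamber $\mathfrak{a}^+$ we live in.

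The first step is a density lemma: the set of \emph{loxodromic} elements of $H$ (those with distinct moduli of eigenvalues, whose Jordan projection lies in the open Weyl chamber) is Zariski dense in $G$. Since being loxodromic is a Zariski-open condition on $G$ and $H$ is Zariski dense, this is immediate; the nontrivial content is that one can moreover arrange several chosen loxodromic elements in $H$ to have attracting/repelling flag pairs in \emph{general position} in the flag variety. Again this is a Zariski-open condition on tuples, so Zariski density of $H$ in the appropriate product gives the existence of such tuples.

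The second, and central, step is the \textbf{contraction/product lemma}: if $g_1, \dots, g_k \in H$ are loxodromic with attracting flags $x_i^+$ and repelling flags $x_i^-$ in sufficiently general position, then for any compact subcone $C \subset \mathbb{R}_{>0}^k$ the product $g_1^{n_1} \cdots g_k^{n_k}$ is again loxodromic for all sufficiently large $(n_1, \dots, n_k) \in \mathbb{Z}^k_{>0}$ with $(n_1, \dots, n_k)/\sum n_i \in C$, and its Jordan projection satisfies
\begin{equation*}
\Lambda\!\left(g_1^{n_1} \cdots g_k^{n_k}\right) \;=\; \sum_{i=1}^k n_i \Lambda(g_i) \;+\; O(1)
\end{equation*}
uniformly in that cone. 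One proves this by the classical ping-pong/attracting-fixed-point argument in the flag variety of $G$: high powers $g_i^{n_i}$ contract compact subsets away from the repelling divisor of $g_i$ to small neighborhoods of $x_i^+$, so chaining them exhibits the product as loxodromic with attracting flag near $x_1^+$, repelling flag near $x_k^-$, and eigenvalue gaps summing up to bounded error. Applying the lemma with $k=2$ and dividing $\Lambda(g_1^{n_1} g_2^{n_2})$ by $n_1 + n_2$ places every positive rational (hence every positive real) combination of $\Lambda(g_1), \Lambda(g_2)$ in $\ell_H$, yielding convexity.

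For the nonempty interior, it suffices to produce $d := \dim \mathfrak{a}$ loxodromic elements $g_1, \dots, g_d \in H$ with attracting/repelling data in general position whose Jordan projections are linearly independent in $\mathfrak{a}$; the product lemma then shows the open simplicial cone $\{ \sum t_i \Lambda(g_i) : t_i > 0\}$ is contained in $\ell_H$. The existence of such tuples is another Zariski-density argument: linear dependence of $\Lambda(g_1), \dots, \Lambda(g_d)$ picks out a proper algebraic subvariety of $G^d$ (provided the Cartan subalgebra is genuinely needed to separate regular semisimple conjugacy classes in $G$, which holds for both $\text{SL}(4,\bbR)$ and $\text{Sp}(4,\bbR)$), and Zariski density of $H^d$ exits this subvariety. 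The main obstacle throughout is the product lemma: one must ensure uniform error bounds and that the general-position hypothesis is preserved under replacing each $g_i$ by $g_i^{n_i}$. Benoist handles this by working in the full flag variety with distance functions coming from the fundamental representations of $G$, and I expect the real care in transferring to $\text{Sp}(4,\bbR)$ is in identifying the correct notion of "generic" flag tuple there, since its flag variety and Weyl chamber are smaller than those of $\text{SL}(4,\bbR)$.
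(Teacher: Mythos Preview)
The paper does not prove this theorem. Theorem~\ref{benoist-limit-cone-theorem} is quoted as a result of Benoist from \cite{benoist1997proprietes} and used as a black box; immediately after the statement the paper simply remarks that Benoist proved more and that ``the above is what we need.'' So there is no proof in the paper to compare your proposal against.

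That said, your sketch is a reasonable outline of the strategy in Benoist's original paper: loxodromic density, general position of attracting/repelling flags via Zariski openness, the product/ping-pong estimate $\Lambda(g_1^{n_1}\cdots g_k^{n_k}) = \sum n_i\Lambda(g_i) + O(1)$, and then convexity and nonempty interior from that. One point deserves care: your argument for nonempty interior asserts that ``linear dependence of $\Lambda(g_1),\dots,\Lambda(g_d)$ picks out a proper algebraic subvariety of $G^d$.'' This is not literally true, since the Jordan projection $\Lambda$ involves logarithms of absolute values of eigenvalues and is not an algebraic map. Benoist's actual argument for nonempty interior is more delicate: if $\ell_H$ were contained in a hyperplane $\ker\chi$ for some linear form $\chi$ on $\mathfrak{a}$, one derives a multiplicative constraint on the moduli of eigenvalues of all elements of $H$, and ruling this out for Zariski-dense $H$ requires an additional step (in \cite{benoist1997proprietes} this goes through the density of the group generated by the Jordan projections, or equivalently a result on the non-vanishing of certain characters). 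Your outline is correct in spirit but that last step is where the genuine work lies.
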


In fact, Benoist proved much more in \cite{benoist1997proprietes}, such as realizability of convex cones with nonempty interior by Zariski-dense subgroups and equivalence of $\ell_H$ and an analogous definition in terms of singular values. The above is what we need.

We are now ready to complete the discrete case.

\begin{proof}[Proof of Proposition \ref{proposition-no-zariski-density}] For any $\gamma \in \Gamma$, the logarithms $(\ell_1, \ell_2, \ell_3, \ell_4)$ of the absolute values of the eigenvalues of $\rho(\gamma)$ must satisfy the \textit{homogeneous} degree $2$ polynomial of Equation \ref{eigenvalue-constraint-homogeneous-form}: $F(x_1,x_2,x_3,x_4) = (x_1 - x_3)(x_3 - x_4) - (x_2 - x_4)(x_2 - x_3) = 0$. This polynomial is not uniformly $0$ on $\mathfrak{a}^+_{\mathfrak{sl}(4,\bbR)}$, and so by homogeneity has zero set $X$ that is a closed cone of positive codimension. As $\Lambda(\rho(\Gamma)) \subset X$, the limit cone $\ell_{\rho(\Gamma)}$ must have empty interior, which by Benoist's theorem is impossible if $\rho(\Gamma)$ is Zariski-dense.
\end{proof}

\subsection{The Collection of all Leaves}\label{section-thanks-Yves} 
 Following a suggestion of Benoist, we adapt an argument of Benz\'ecri \cite{benzecri1960thesis} (see also \cite{goldman2022geometric}, proof of Theorem 4.5.6) to characterize the image of $\mathfrak{s}_\rho$. This will be used to give a short proof in the case of non-discrete automorphism group below. We maintain the notations of the previous section, notably the normalization $N$.
 
 \begin{proposition}[Benoist]\label{image-closed}
    Let $t \in \partial \Gamma$. Then $\mathfrak{s}_\rho(\partial \Gamma) = {\rm{Cl}}_\mathfrak{C}(\{[C_t]\})$.
\end{proposition}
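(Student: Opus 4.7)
The inclusion $\mathfrak{s}_\rho(\partial \Gamma) \subseteq \text{Cl}_\mathfrak{C}(\{[C_t]\})$ is immediate from Lemma \ref{lemma-leaf-maps-basics}: by minimality of the $\Gamma$-action on $\partial \Gamma$ the orbit $\Gamma t$ is dense, and that lemma further gives that $\mathfrak{s}_\rho$ is continuous and constant on $\Gamma t$ with value $[C_t]$. Thus the general topological fact $f(\overline{A}) \subseteq \overline{f(A)}$ for continuous maps yields $\mathfrak{s}_\rho(\partial \Gamma) = \mathfrak{s}_\rho(\overline{\Gamma t}) \subseteq \overline{\mathfrak{s}_\rho(\Gamma t)} = \overline{\{[C_t]\}}$.

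For the reverse inclusion, my plan is to adapt Benz\'ecri's argument that the projective class of a divisible domain is closed in $\mathfrak{C}$, with cocompactness of the divisibility group's action on the divisible domain replaced here by compactness (in $\mathcal{C}$) of the continuously varying family $\{C_s\}_{s \in \partial \Gamma}$ together with density of the $\Gamma$-orbit $\{C_{\gamma t}\}$ inside it. So fix $[\Omega] \in \text{Cl}_\mathfrak{C}(\{[C_t]\})$ with witnessing sequence $A_n \in \text{SL}(3,\bbR)$ and $A_n \overline{C_t} \to \overline{\Omega}$ in Hausdorff topology, and introduce the equivariance element $M_\gamma := N_{\gamma t \to t_0} \circ \rho(\gamma) \circ N_{t \to t_0}^{-1} \in \text{SL}(3,\bbR)$, which by the equivariance established in Lemma \ref{lemma-leaf-maps-basics} satisfies $M_\gamma C_t = C_{\gamma t}$. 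The central claim I would aim to establish is that there exist $\gamma_n \in \Gamma$ with $B_n := A_n M_{\gamma_n}^{-1}$ contained in a fixed compact subset of $\text{SL}(3,\bbR)$. Granting this claim, a subsequence gives $B_n \to B \in \text{SL}(3,\bbR)$ and, by compactness of $\partial \Gamma$, $\gamma_n t \to s \in \partial \Gamma$; continuity of leaves (Lemma \ref{lemma-leaf-maps-basics}) yields $C_{\gamma_n t} \to C_s$ in Hausdorff topology, so $A_n C_t = B_n C_{\gamma_n t} \to B C_s$, hence $\Omega = B C_s$ and $[\Omega] = [C_s] \in \mathfrak{s}_\rho(\partial \Gamma)$, as desired.

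The main obstacle, and the step I expect to be the hardest, is producing such $\gamma_n$. The approach I would pursue is to fix $q$ in the interior of $\Omega$, so $q \in A_n C_t$ eventually and $(A_n C_t, q) \to (\Omega, q)$ in $\mathcal{C}^*$, placing the sequence $(A_n C_t, q)$ inside a fixed compact subset of $\mathcal{C}^*$. Using Benz\'ecri compactness---the proper cocompact action of $\text{SL}(3,\bbR)$ on $\mathcal{C}^*$---together with density of $\Gamma t$ in $\partial \Gamma$, I would select $\gamma_n \in \Gamma$ so that the transported pointed pair $(C_{\gamma_n t}, M_{\gamma_n}(A_n^{-1} q))$ also lies in a fixed compact subset of $\mathcal{C}^*$. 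Since $B_n$ is by construction the element of $\text{SL}(3,\bbR)$ carrying the pointed pair $(C_{\gamma_n t}, M_{\gamma_n}(A_n^{-1} q))$ to $(A_n C_t, q)$, properness of the action would then force $\{B_n\}$ to be bounded, completing the proof. The delicate technical point is ensuring, by exploiting cocompactness in $\mathcal{C}^*$ and the density of the $\Gamma$-orbit, that these $\gamma_n$ can always be chosen so that the transported basepoints do not escape to the leaf boundaries.
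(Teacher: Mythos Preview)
Your overall strategy coincides with the paper's: both pass to pointed domains in $\mathcal{C}^*$, invoke Benz\'ecri's properness/compactness, and reduce everything to keeping a sequence of basepoints inside a fixed compactum. The first inclusion and the endgame (extracting $B\in\text{SL}(3,\bbR)$ and $s\in\partial\Gamma$ from bounded $B_n$ and $\gamma_n t\to s$) are exactly right.

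The gap is precisely the step you flag as ``delicate,'' and the mechanism you propose for it is not enough. Benz\'ecri cocompactness is cocompactness for the full $\text{SL}(3,\bbR)$-action on $\mathcal{C}^*$, not for the countable family $\{M_\gamma\}$; and density of $\Gamma t$ in $\partial\Gamma$ says nothing about where $M_{\gamma}(A_n^{-1}q)$ lands inside $C_{\gamma t}$. As $A_n^{-1}q$ drifts toward $\partial C_t$ you need an actual reason why some $\gamma_n$ pulls it back into the interior, and nothing in your two ingredients supplies one. The paper resolves this by going up to the ambient $\RP^3$: the un-normalized point $N_{t\to t_0}^{-1}(A_n^{-1}q)$ lies in $\text{dev}(t)\subset\Omega_\rho$, and the key geometric input is that $\rho(\Gamma)$ acts \emph{cocompactly} on $\Omega_\rho$ (the quotient is $T^1S$). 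Fixing a compact $K\subset\Omega_\rho$ with $\rho(\Gamma)K=\Omega_\rho$, one chooses $\gamma_n$ so that $\rho(\gamma_n)$ carries the point into $K$; a straightforward uniform-separation check then shows that points of $K$, after normalization, stay a definite distance from every leaf boundary. With this choice of $\gamma_n$ your $B_n$ are bounded exactly as you argue. So your sketch becomes a proof once you replace the appeal to orbit density by cocompactness of the $3$-dimensional action on $\Omega_\rho$.
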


\begin{proof}
    From the minimality of the action of $\Gamma$ on $\partial \Gamma$, the continuity $\mathfrak{s}_\rho$, and the observation that for $\gamma \in \Gamma$ and $t \in \partial \Gamma$ that $[\mathfrak{s}_\rho(t)] = [\mathfrak{s}_\rho(\gamma t)]$ we see that $\mathfrak{s}_\rho(\partial \Gamma) \subset \text{Cl}_{\mathfrak{C}}(\{C_t\})$. So it suffices to show that $\mathfrak{s}_\rho(\partial \Gamma)$ is closed in $\mathfrak{C}$.
    
    We next describe the condition we shall verify in order to prove this. To show $\mathfrak{s}_\rho(\partial \Gamma)$ is closed in $\mathfrak{C}$ it suffices to show that the union of the $\text{SL}(3,\mathbb{R})$-orbits of $\{C_t \}$ $(t \in \partial \Gamma)$ is closed in $\mathcal{C}$, which is equivalent to the closedness of the union of the $\text{SL}(3,\bbR)$-orbits of the preimages $\Pi^{-1}(\{\mathfrak{s}_\rho(t) \}) = \{\mathfrak{s}_\rho(t) \} \times \mathfrak{s}_\rho(t)$ $(t \in \partial \Gamma)$ in $\mathcal{C}^*$. This is in turn equivalent to showing the image $\mathfrak{L}$ of $\bigcup_{t\in \partial \Gamma} \Pi^{-1}(\{\mathfrak{s}_\rho(t) \})$ under the projection $\mathcal{Q}^*: \mathcal{C}^* \to \mathfrak{C}^*$ is closed. By Benz\'ecri's compactness theorem, $\mathfrak{C}^*$ is a Hausdorff space and so compact sets in $\mathfrak{C}^*$ are closed. As $\mathfrak{C}^*$ is second-countable, it suffices to verify that $\mathfrak{L}$ is sequentially compact. This is what we shall prove.

    Fix a compact set $K \subset \Omega_\rho$ so that $\rho(\Gamma) K = \Omega_\rho$. One verifies using compactness of $K$ and $\partial \Gamma$ that the image of $K$ after normalization is uniformly separated from the complement of the leaves in the sense that there is some $\delta > 0$, independent of $t \in \partial \Gamma$, so that if $p \in K \cap \xi^3(t)$ then $d_{t_0}(N_t(p), C_t^c) > \delta$.

    So let $c_n \in \mathfrak{L}$ be a sequence. For all $n$, choose a leaf $C_{t_n}$ and $p_n \in \text{Int}(C_{t_n})$ so that $\mathcal{Q}^*((C_{t_n}, p_n)) = c_n$. Since $\rho(\Gamma)K = \Omega_\rho$, after applying projective equivalences arising from compositions of normalizations and the action of $\rho(\gamma)$ ($\gamma \in \Gamma$) on $\Omega_\rho$, we may arrange for $p_n \in K \cap C_{t_n}$. It follows from compactness of $K$ and continuity features of our normalization that after taking a subsequence, there is some $t_\infty \in \partial \Gamma$ and $p_\infty \in C_{t_\infty} \cap K$ so that $\lim\limits_{n \to \infty} (C_{t_n}, p_n) = (C_{t_\infty}, p_\infty)$ in $\mathcal{C}^*$. Hence $\mathcal{Q}^*(C_{t_\infty}, p_\infty) \in \mathfrak{L}$ is a limit point of $c_n$ and so $\mathfrak{L}$ is compact, as desired.
\end{proof}

\subsection{The Non-Discrete Case}\label{subsection-non-discrete}

We show:
\begin{proposition}[Only Ellipses]\label{non-discrete-goal}
    Suppose there is some $x \in \partial \Gamma$ so that $\mathfrak{s}_\rho(x)$ has non-discrete automorphism group. Then $\rho$ is $4$-Fuchsian and $\mathfrak{s}_\rho(y)$ is the ellipse for all $y \in \partial \Gamma$.
\end{proposition}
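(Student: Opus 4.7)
My plan is to first use the non-discrete symmetry to conclude every leaf is an ellipse, then extract eigenvalue information sufficient to apply Proposition \ref{eigenvalues-that-look-fuchsian-implies-fuchsian}.

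\textbf{Every leaf is an ellipse.} The closed subgroup theorem applied to the non-discrete closed subgroup $\text{Aut}(\xi^3(t_0), C_x) \subset \text{SL}(3,\bbR)$ produces a continuous $1$-parameter subgroup $A_t$ preserving $C_x$. By the Basic Regularity proposition, $C_x$ is strictly convex with $C^1$ boundary. I would invoke the classical fact that a strictly convex $C^1$ properly convex domain in $\RP^2$ admitting a positive-dimensional group of projective automorphisms is projectively equivalent to an ellipse: the other properly convex domains in $\RP^2$ with continuous automorphism group---triangles and cones over intervals---fail strict convexity, and non-principal $1$-parameter subgroups of $\text{SL}(3,\bbR)$ produce invariant boundary curves that are singular at their projective fixed points. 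Since Hausdorff limits of ellipses in $\mathcal{C}$ are ellipses (the space of plane conics is compact, and any non-ellipse limit is degenerate and hence not in $\mathcal{C}$), $\{[C_x]\}$ is a closed point of $\mathfrak{C}$. Proposition \ref{image-closed} then gives $\mathfrak{s}_\rho(\partial \Gamma) = \text{Cl}_\mathfrak{C}\{[C_x]\} = \{[C_x]\}$, so every leaf is projectively an ellipse.

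\textbf{Eigenvalues are Fuchsian.} Fix a lift of $\rho$ to $\text{SL}(4,\bbR)$ and let $\gamma \in \Gamma - \{e\}$. The subspaces $\xi^3(\gamma^+)$ and $\xi^3(\gamma^-)$ are $\rho(\gamma)$-invariant with restrictions having eigenvalues $(\lambda_1, \lambda_2, \lambda_3)$ and $(\lambda_2, \lambda_3, \lambda_4)$ respectively. Each restriction preserves the ellipse $C_{\gamma^\pm}$, so each projects into $\text{PO}(2,1) \subset \text{PGL}(3,\bbR)$. Any element of $\text{PO}(2,1)$ with three distinct real eigenvalues has them in the ratio $\mu : 1 : \mu^{-1}$, so $\lambda_1 \lambda_3 = \lambda_2^2$ and $\lambda_2 \lambda_4 = \lambda_3^2$; equivalently, $\ell_i = \log|\lambda_i|$ is arithmetic in $i$. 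Combined with $\sum_i \ell_i = 0$, this forces $\rho(\gamma)$ to be conjugate to $\text{diag}(\lambda^3, \lambda, \lambda^{-1}, \lambda^{-3})$ for some $\lambda > 1$, and Proposition \ref{eigenvalues-that-look-fuchsian-implies-fuchsian} then implies $\rho$ is Fuchsian.

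The main obstacle is the classification claim used in the first step. If a direct reference is unavailable, one can verify it by case analysis on the Jordan type of the generator $X \in \mathfrak{sl}(3,\bbR)$ of $A_t$: for unipotent $X$ the orbits of $A_t$ in $\RP^2$ are projective conics of full rank and hence ellipses; for semisimple $X$ with complex conjugate eigenvalues the closure of $A_t$ contains a copy of $\text{SO}(2)$, forcing rotational symmetry and thus the ellipse; and for real semisimple $X$ the only type whose orbit closures are $C^1$ at the projective fixed points is the principal $\text{diag}(e^{2t}, 1, e^{-2t})$, which preserves only ellipses.
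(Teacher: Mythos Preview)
Your first step has a genuine gap: the ``classical fact'' you invoke is false. For any $\alpha > 1$ the domain $\Omega_\alpha = \{(x,y) : y > |x|^\alpha\}$ (taken projectively) is properly convex, strictly convex, has $C^1$ boundary, and is preserved by the one-parameter group $\text{diag}(e^t, e^{\alpha t}, 1)$; for $\alpha \neq 2$ it is not an ellipse. Your case analysis at the end breaks down at exactly this point: for a real semisimple generator $\text{diag}(a,b,c)$ with $a > b > c$, the generic orbit near the attracting fixed point is $z = C y^{(c-a)/(b-a)}$ in suitable affine coordinates, and since $(c-a)/(b-a) > 1$ this is always $C^1$ there, not only in the principal case. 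So you cannot conclude directly that $C_x$ is an ellipse.

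The paper sidesteps this by proving something weaker that still suffices: the one-parameter orbit on $\partial C_x$ is smooth and, by strict convexity, cannot have identically zero curvature, so $\partial C_x$ has a $C^2$ point of nonvanishing curvature. This forces the ellipse to lie in $\text{Cl}_\mathfrak{C}\{[C_x]\}$. Proposition \ref{image-closed} then produces some leaf $C_y$ that \emph{is} an ellipse, and closedness of the ellipse class in $\mathfrak{C}$ together with density of the $\Gamma$-orbit of $y$ gives that all leaves are ellipses. Your second step, reading off $\lambda_1\lambda_3 = \lambda_2^2$ and $\lambda_2\lambda_4 = \lambda_3^2$ from the fact that the restrictions of $\rho(\gamma)$ to $\xi^3(\gamma^\pm)$ land in $\text{PO}(2,1)$, is correct and is a clean alternative to the paper's use of Lemma \ref{regularity-lemma} to reach the same arithmetic-progression conclusion.
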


\begin{figure}[t]
    \centering
    \makebox[0pt]{\includegraphics[scale=0.56]{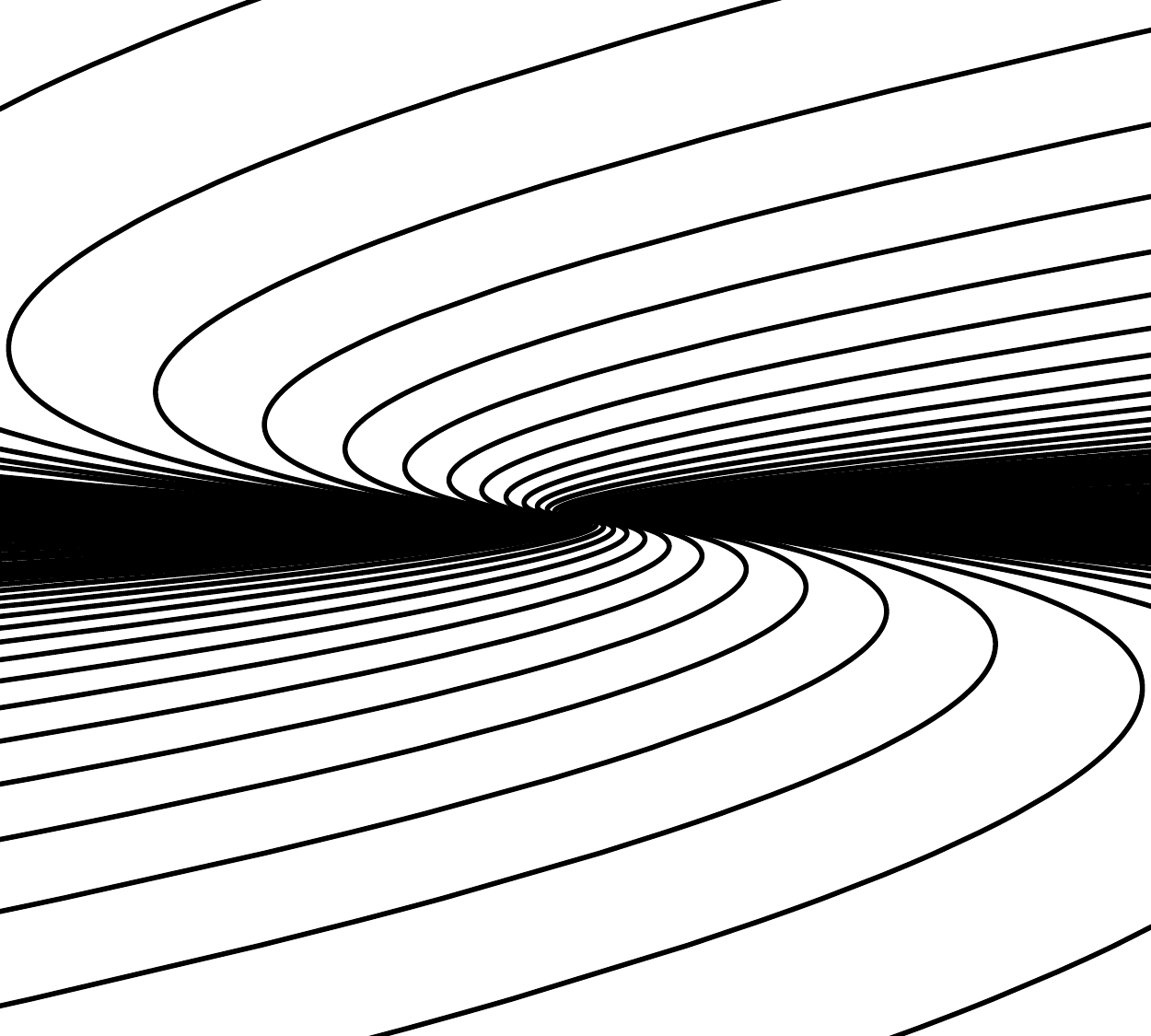} \hspace{-0.5cm} \includegraphics[trim={0 1.1cm 1cm 0}, scale=0.42]{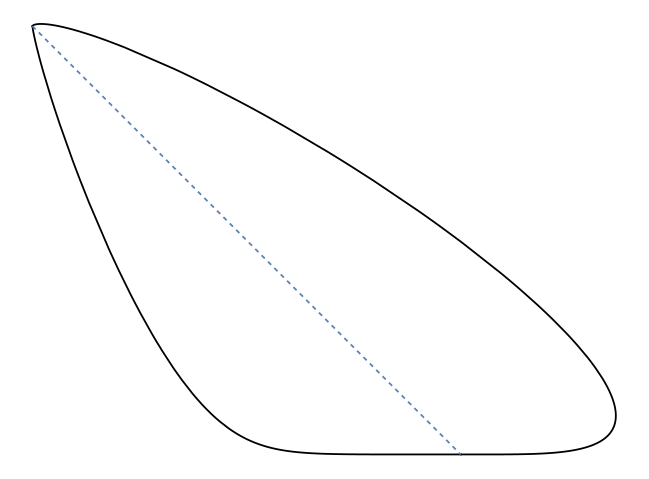}}
    \caption{Some sample orbits of one-parameter subgroups of $\text{SL}(3,\bbR)$.}
    \label{fish-hooks-picture}
\end{figure}  

\begin{proof}
    Let $x \in \partial \Gamma$ be so that $\text{Aut}(\xi^3(t_0), C_x)$ is non-discrete. Then by the closed subgroup theorem, $\text{Aut}(\xi^3(t_0), C_x)$ contains a one-parameter subgroup $H = \{ A_t \}_{t \in \bbR}$. Then for any $p_0 \in \partial C_x$ the orbit $Hp_0$ is entirely contained in $\partial C_x$.
    
    Since fixed-points of $A_t$ for $t \neq 0$ are either isolated or contained in a line of fixed points and $C_x$ is strictly convex, it follows that $\partial C_x$ contains a nontrivial orbit $\mathcal{O}$ of $H$, which must be smooth. Note that $\mathcal{O}$ cannot have everywhere vanishing curvature, since then $\mathcal{O}$ would be a line segment and $C_x$ is strictly convex. So $\partial C_x$ must have a $C^2$ point of nonvanishing curvature. It is then a standard fact (e.g. \cite{goldman2022geometric} Ex. 4.5.2.3) that the ellipse $[O] \in \mathfrak{C}$ is contained in the $\mathfrak{C}$-closure of $\{ [C_x]\}$. 

    By proposition \ref{image-closed}, there is some $y \in \partial \Gamma$ so $[C_y] = [O]$. Since the projective class $[O]$ of the ellipse is a closed point of $\mathfrak{C}$, by Lemma \ref{lemma-leaf-maps-basics} (Leaf Map Basics) the preimage $\mathfrak{s}_\rho^{-1}(\{[O]\}) \subset \partial \Gamma$ is closed and contains a dense subset of $\partial \Gamma$, hence must be all of $\partial \Gamma$. So for all $t \in \partial \Gamma,$ the leaf $C_t$ is an ellipse.
  
  Let $\gamma \in \Gamma - \{e\}$. Since we know $C_{\gamma^+}$ and $C_{\gamma^-}$ are ellipses, we may apply the $\alpha$-modelling Lemma \ref{regularity-lemma} to $\xi^1(\gamma^+)$ and $\xi^1(\gamma^-)$, which yields (in the notation of \S\ref{subsubsection-zariski-closures})

  \begin{align}\label{equation-ellipse-constraints} \frac{\ell_1 - \ell_3}{\ell_1 - \ell_2} =2 = \frac{\ell_2 - \ell_4}{\ell_3 - \ell_4}.\end{align} Write $\alpha = \ell_1 - \ell_2, \beta = \ell_2 - \ell_3, \gamma = \ell_3 - \ell_4$; all are positive.
  
  The first equality of Equation \ref{equation-ellipse-constraints} shows that $\alpha = \beta$, and the second equality $2 = (\beta + \gamma)/\gamma$ shows that $\beta = \gamma$. So $\alpha =\beta = \gamma$, which together with the condition $\ell_1 +\ell_2 + \ell_3 + \ell_4 =0$ (due to $\rho(\gamma) \in \text{SL}(4,\bbR)$) shows that $\rho(\gamma)$ is conjugate to a matrix of the form $\text{diag}(\lambda^3, \lambda, \lambda^{-1}, \lambda^{-3})$ for some $\lambda > 1$. Proposition \ref{eigenvalues-that-look-fuchsian-implies-fuchsian} (Fuchsian from Eigenvalues) shows $\rho$ is Fuchsian. \end{proof}

\subsection{Deduction of Results}\label{subsubsection-putting-it-together}

We end by documenting how the results claimed in the introduction follow. We first note:

\begin{theorem}\label{theorem-nonconstant-leafs}
    The leaf map $\mathfrak{s}_\rho$ is constant if and only if $\rho$ is Fuchsian. If $\rho$ is Fuchsian, then $\mathfrak{s}_\rho$ takes value the ellipse. 
\end{theorem}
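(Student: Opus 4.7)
The plan is to prove the two implications separately; the substantive work is the forward implication (constant $\Rightarrow$ Fuchsian), which has been reduced by the preceding propositions to a case assembly.

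For the easy direction (Fuchsian $\Rightarrow$ constant with value the ellipse), I would observe that a Fuchsian $\rho$ has Fr\'enet curve equal to a conjugate of the Veronese embedding, and hence the equivariant description of $\text{dev}$ recalled in \S\ref{convex-projective-structures-background} expresses every leaf $\text{dev}(x)$ as the projective image of a standard round disk in $\mathbb{RP}^2$. Each such leaf is projectively equivalent to the ellipse, so $\mathfrak{s}_\rho$ is constant with value $[O]$.

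For the converse, suppose $\mathfrak{s}_\rho$ is constant. I would split on the projective automorphism group of leaves. If there is some $x\in \partial\Gamma$ with $\text{Aut}(\xi^3(t_0),C_x)$ non-discrete, then Proposition \ref{non-discrete-goal} (Only Ellipses) directly concludes $\rho$ is $4$-Fuchsian. Otherwise every leaf has discrete automorphism group; since $\mathfrak{s}_\rho$ is constant, Proposition \ref{key-reduction} (Modify to Continuity) produces a continuous projective equivalence of leaves $\widetilde f$ over a non-empty open $U\subset\partial\Gamma$. For an arbitrary $\gamma \in \Gamma-\{e\}$, Corollary \ref{corollary-enlarge-domains} (Enlarge Domains) extends this to a continuous projective equivalence of leaves over an open set containing both $\gamma^+$ and $\gamma^-$; Proposition \ref{I-was-dumb} (Modelling Constraints) then forces
\[
\text{model}_{\gamma^+}(\xi^2(\gamma^-)\cap\xi^3(\gamma^+)) = \text{model}_{\gamma^-}\xi^1(\gamma^-).
\]
Feeding this into Lemma \ref{regularity-lemma} (Models at Fixed Points) applied to $\rho(\gamma)$ acting on the invariant subspaces $\xi^3(\gamma^\pm)$ yields the homogeneous eigenvalue identity \eqref{eigenvalue-constraint-homogeneous-form} for every $\gamma$. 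Corollary \ref{proposition-zariski-dichotomy} (Zariski Closure Dichotomy) then forces $\rho$ to be either $4$-Fuchsian or Zariski dense in $\text{SL}(4,\mathbb{R})$, and Proposition \ref{proposition-no-zariski-density} (Zariski Density Impossible) excludes the second option.

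There is no genuine obstacle at this stage: all the heavy lifting (the Baire category argument to tame discontinuities, the sliding argument producing modelling constraints, and the limit cone / Zariski closure dichotomy) is already packaged in the cited propositions. The only care needed is to verify that the hypotheses of each proposition are satisfied under the standing assumption ``$\mathfrak{s}_\rho$ constant'' — in particular that Proposition \ref{key-reduction} requires only countable image (and thus applies to a constant map a fortiori), and that Proposition \ref{non-discrete-goal} does not itself assume constancy, so the case split by automorphism group is legitimate.
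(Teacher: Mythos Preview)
Your proposal is correct and follows essentially the same approach as the paper: the same discrete/non-discrete case split on automorphism groups, with the discrete branch routed through the modelling constraints and Zariski closure dichotomy, and the non-discrete branch handled by Proposition~\ref{non-discrete-goal}. The only cosmetic differences are that the paper cites Guichard--Wienhard directly for the Fuchsian direction rather than sketching the Veronese argument, and compresses the discrete branch to a bare citation of Corollary~\ref{proposition-zariski-dichotomy} and Proposition~\ref{proposition-no-zariski-density} (your explicit unpacking through Proposition~\ref{key-reduction}, Corollary~\ref{corollary-enlarge-domains}, and Proposition~\ref{I-was-dumb} is accurate but not strictly needed at this stage, since Proposition~\ref{I-was-dumb} already packages those hypotheses).
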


\begin{proof}
    The Fuchsian case is shown by Guichard-Wienhard in \cite{guichard2008convex}. That $\mathfrak{s}_\rho$ is not constant if $\rho$ is not Fuchsian follows from Corollary \ref{proposition-zariski-dichotomy}, Proposition \ref{proposition-no-zariski-density}, and Proposition \ref{non-discrete-goal}. 
\end{proof}

The main theorems follow:

\begin{proof}[Proof of Theorem \ref{theorem-fuchsian-locus-characterization}]
    The first equivalence is Theorem \ref{theorem-nonconstant-leafs}. The equivalence of (2) and (3) is given by the equivalence of constancy and countable image in Proposition \ref{key-reduction}. The parts of the equivalence of (4) with (1) pertaining to Fuchsian representations follow from standard facts about ellipses. That a closed point of $\mathfrak{C}$ or a divisible domain occuring as a leaf implies (2) follows from that divisible domains are closed points of $\mathfrak{C}$ together with Lemma \ref{lemma-leaf-maps-basics} (Leaf Map Basics). That a leaf having non-discrete automorphism group implies $\rho$ is Fuchsian follows from Proposition \ref{non-discrete-goal}, in particular the lack of the assumption that $\mathfrak{s}_\rho$ is constant there. 
\end{proof}

\begin{proof}[Proof of Theorem \ref{headline-theorem-wild-pathology}]
    Combine Theorem \ref{theorem-nonconstant-leafs} with Lemma \ref{lemma-leaf-maps-basics} (Leaf Map Basics).
\end{proof}

\begin{proof}[Proof of Theorem \ref{thm-solves-benzecri}]
    For $\rho$ non-Fuchsian, that $\mathfrak{s}_\rho(\partial \Gamma)$ is a non-point closed subset of $\mathfrak{C}$ is Theorem \ref{theorem-fuchsian-locus-characterization} and Proposition \ref{image-closed}. That $\mathfrak{s}_\rho(\partial \Gamma)$ is minimal among closed sets follows from the characterization in Proposition \ref{image-closed} that $\mathfrak{s}_\rho(\partial \Gamma)$ is the closure of any point in $\mathfrak{s}_\rho(\partial \Gamma)$. This proves the Theorem for $\mathfrak{C}(\RP^2)$.

    The result for $\mathfrak{C}(\RP^n)$ for $n \geq 3$ reduces to the $n = 2$ case by a classical characterzation of $\mathfrak{C}(\RP^n)$-closures of convex hulls due to Benz\'ecri \cite{benzecri1960thesis} (\S V.3, Proposition 4). In particular, Benz\'ecri's characterization implies that if $\Omega \in \mathfrak{C}(\RP^n)$ has $\text{Cl}_{\mathfrak{C}(\RP^n)}(\{ [\Omega]\})$ a non-point minimal closed subset of $\mathfrak{C}(\RP^n)$, then the same is true in $\mathfrak{C}(\RP^{n+1})$ of the domain $\Omega' \subset \RP^{n+1}$ formed as follows. Take the convex hull in an affine chart $\mathcal{A}$ of an inclusion of $\Omega$ in the intersection with $\mathcal{A}$ of a copy $P$ of $\RP^n \subset \RP^{n+1}$ and a point $p \in \mathcal{A} - P$.
\end{proof}

\begin{proof}[Proof of Corollary \ref{cor-arbitrary-boundary-points}] This follows from the standard fact (e.g. \cite{goldman2022geometric} Ex. 4.5.2.3) that a properly convex domain $\Omega$ in $\RP^2$ with a $C^2$ point of nonvanishing curvature contains the ellipse in the $\mathfrak{C}$-closure of $\{[\Omega]\}$. If $\rho \in \text{Hit}_4(S)$, then since $\mathfrak{s}_\rho$ has closed image, this implies that if any leaf $\mathfrak{s}_\rho$ contains a $C^2$ boundary point with nonzero curvature then there is a leaf $\mathfrak{s}_\rho(y)$ projectively equivalent to the ellipse. We conclude $\rho$ is Fuchsian by Theorem \ref{theorem-fuchsian-locus-characterization}. \end{proof}

The proof of Corollary \ref{cor-not-awful} is similar to that of Corollary \ref{cor-arbitrary-boundary-points}.

\appendix
\section{Proofs of Some Useful Facts}

In this appendix, we prove two facts that are used in our main proofs. We reproduce the statements from the body of the paper for the convenience of the reader.

The first is Lemma \ref{regularity-lemma} on models of fixed points of domains invariant under appropriate projective maps.

\begin{lemma}[Models at Fixed Points] Let $\Omega \subset \mathbb{RP}^2$ be a properly convex, strictly convex domain preserved by {\rm{$A \in \text{GL}(3, \mathbb{R})$}} conjugate to {\rm$\text{diag}(\lambda_1, \lambda_2, \lambda_3)$ with $\lambda_1 > \lambda_2 > \lambda_3 > 0$}. Write $l_i = \log \lambda_i $ for $i = 1,2,3$ and let $x_{A^+}$ denote the attracting fixed point of $A$ in $\mathbb{RP}^2$.

Then $x_{A^+} \in \partial \Omega$ is $\alpha$-modelled for $$\alpha = \frac{l_1 - l_3}{l_1 - l_2}.$$
\end{lemma}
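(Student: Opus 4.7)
My plan is to work in affine coordinates centered at $x_{A^+}$ in which $A$ acts as a diagonal linear contraction, and then reduce the $\alpha$-modelling condition to a scaling identity along a single $A$-orbit on $\partial\Omega$, interpolated by convexity.

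Since $\alpha$-modelling is a projective invariant, I may assume $A = \mathrm{diag}(\lambda_1, \lambda_2, \lambda_3)$ in the standard basis. In the affine chart $\{x_1 \neq 0\}$ with coordinates $(u,v) = (x_2/x_1, x_3/x_1)$, the attracting fixed point $x_{A^+} = [e_1]$ becomes the origin and $A$ acts as the linear contraction $(u,v) \mapsto (ru, sv)$ with $r = \lambda_2/\lambda_1$, $s = \lambda_3/\lambda_1$, both in $(0,1)$ and $r > s$. By the definition of $\alpha$ one has $s = r^\alpha$. Any supporting line to $\partial\Omega$ at the origin is necessarily $A$-invariant, hence is either $\{v=0\}$ or $\{u=0\}$. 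The line $\{u=0\}$ passes through $[e_3]$ at infinity in this chart, and $[e_3]\in\overline{\Omega}$ as the limit of $A^{-n}$-iterates of any interior point of $\Omega$; strict convexity forbids a supporting line from meeting $\overline{\Omega}$ at two points, so $\{u=0\}$ is not supporting at the origin. Thus the unique supporting line at the origin is $\{v=0\}$, and $\partial\Omega$ is $C^1$ there with tangent line $\{v=0\}$. On a neighborhood of $0$ the curve $\partial\Omega$ is the graph of a non-negative convex function $f$ with $f(0)=f'(0)=0$, and the $\alpha$-modelling condition reduces to showing $f(u) \asymp |u|^\alpha$ near $u=0$.

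Fix $u_0 > 0$ with $(u_0, f(u_0)) \in \partial\Omega$, and set $u_n := r^n u_0$. Since $A$ preserves $\partial\Omega$, iteration gives $f(u_n) = s^n f(u_0)$, and $s = r^\alpha$ makes the ratio $f(u_n)/u_n^\alpha$ independent of $n$. For $u \in [u_{n+1}, u_n]$, convexity of $f$ together with $f(0) = 0$ forces $f$ to be non-decreasing on $[0,u_0]$, so
\[
s^{n+1} f(u_0) = f(u_{n+1}) \leq f(u) \leq f(u_n) = s^n f(u_0).
\]
Using $u \in [r u_n, u_n]$ and $s = r^\alpha$, dividing by $u^\alpha$ yields the uniform two-sided bound
\[
s \cdot \frac{f(u_0)}{u_0^\alpha} \,\leq\, \frac{f(u)}{u^\alpha} \,\leq\, \frac{1}{s} \cdot \frac{f(u_0)}{u_0^\alpha}
\]
for all $u \in (0, u_0]$. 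An identical argument starting from a boundary point with $u$-coordinate $u_0' < 0$ handles the other side, using that $f$ is non-increasing on $[u_0', 0]$ for the same convexity reason. Combined with $d(y, x_{A^+}) \asymp |u|$ and $d(y, T_{x_{A^+}}\partial\Omega) = f(u)$ for $y=(u,f(u))$, this gives the $\alpha$-modelling.

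The main obstacle is the initial geometric setup rather than the subsequent dynamics: one must pin down that the tangent line at $x_{A^+}$ is $\{v=0\}$ and that $\partial\Omega$ is a convex graph over it, using only strict convexity and the action of $A$. Once this is in hand, the orbit scaling $f(ru) = s f(u)$ together with the convex sandwich between consecutive orbit points is essentially Benoist's original argument. The point of reproducing it here is to verify that no step requires divisibility or cocompactness of an action on $\Omega$: a single invariant element $A$ with the prescribed spectral data suffices. This is exactly what is needed for the application in the body of the paper, since the leaves $\mathfrak{s}_\rho(\gamma^\pm)$ for non-Fuchsian $\rho$ are not known \emph{a priori} to admit any further projective symmetries.
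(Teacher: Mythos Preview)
Your argument is correct and follows essentially the same route as the paper's: choose the affine chart in which $A$ acts as $\mathrm{diag}(r,s)$, use the orbit scaling $f(r^n u_0)=s^n f(u_0)$ together with $s=r^\alpha$, and interpolate between consecutive orbit points by convexity. One small slip: the assertion that \emph{every} supporting line at the origin is $A$-invariant is not literally true---what is true is that the closed arc of supporting lines is $A$-invariant, so if it contains more than one line its endpoints must be the two $A$-invariant lines $\{u=0\}$ and $\{v=0\}$; since you correctly rule out $\{u=0\}$, the arc is a single point and your conclusion that the tangent is $\{v=0\}$ follows.
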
 

\begin{proof}[Proof of Lemma \ref{regularity-lemma}]
    We follow Benoist's argument in \cite{benoist2004convexesI}, carefully tracking the error terms involved and noting some sources of uniformity to prove the slightly stronger conclusion of $\alpha$-modelling than is asserted in \cite{benoist2004convexesI}. 

    Let $e_1, e_2, e_3$ be the eigenvectors of $A$ with eigenvalues $\lambda_1, \lambda_2, \lambda_3$, respectively. Work in an affine chart for which the repelling hyperplane of $A$ is the hyperplane at infinity, $x_{A^+}$ is at the origin, the attracting hyperplane $y_{A^+}$ of $A^+$ is the horizontal axis, the intersection of the line $\overline{[e_1][e_3]}$ with this affine chart is the vertical axis, and $\Omega$ is contained in the upper half-plane. Strict convexity implies $\partial \Omega$ meets the horizontal axis only at the origin and contains no line segment. Denote by $x_{A^{-}}$ the repelling fixed-point of $A$.
    
    It suffices to produce constants $C_1, C_2$ so that for all $x \neq x_{A^+}$ in a compact subset $K \subset \partial \Omega$ containing $x_{A^+}$ in its interior, \begin{align}\label{regularity-goal} C_1 \leq \log {{d(x, x_{A^+})}} - \alpha^{-1}\log d(x, y_{A^+}) \leq C_2.\end{align}
    
    In this coordinate system, the action of $A$ is given by $\begin{bmatrix} \lambda_2/\lambda_1 & \, \\ \, & \lambda_3/\lambda_1 \end{bmatrix}$. So if $p = (a,b)$, we have $d (A^n p, x_{A^+}) = \frac{1}{\lambda_1^n} (\lambda_2^{2n} |a| + \lambda_3^{2n} |b|)^{1/2}$ and $d(A^np, y_{A^+}) = \lambda_3^{n} b/\lambda_1^n$. For $n$ sufficiently large, we may assume $\lambda_2^{2n}|a|^2 \leq \lambda_2^{2n} |a|^2 + \lambda_3^{2n}|b|^2 \leq \max \{\lambda_{2}^{2n+2}|a|^2, \lambda_2^{2n-2}|a|^2 \}$. So we have that \begin{align*} \log d(A^np, x_{A^+}) &= -{n} \ell_1 + \frac{1}{2} \log ( \lambda_2^{2n} |a|^2 + \lambda_3^{2n}|b|^2) \\ & \leq n(\ell_2 - \ell_1) + |\ell_2 | + \log |a|,
     \end{align*} and the lower bound $n (\ell_2 - \ell_1) + \log |a|$ follows similarly. Furthermore, \begin{align*}
        \alpha^{-1} \log d(A^np, y_{A^+}) &= \alpha^{-1} \log \left( \frac{\lambda_3^n}{\lambda_1^n} |b| \right) = n (\ell_2 - \ell_1) + \alpha^{-1} \log |b|,
     \end{align*} so that for this $p$ we have Equation \ref{regularity-goal}, with $C_1 = \log |a| + \alpha^{-1} \log |b|$ and $C_2 = |\ell_2| + \log|a| + \alpha^{-1} \log |b|$.

    Now, we observe that if $p = (a,b) \in \partial \Omega - \{x_{A^-}\}$ is contained in this affine chart, convexity of $\Omega$ implies that all points in $\partial \Omega - \{ x_{A^{-}} \}$ between $p$ and $gp$ are in the compact box $B = [(\lambda_2/\lambda_1 )a, a] \times [(\lambda_3/\lambda_1 )b, b]$. In particular, the segment of $\partial \Omega$ between $p$ and $x_{A^+}$ is contained in $\{x_{A^+}\} \cup (  \bigcup_{n=0}^\infty A^n B)$. On $B$, all estimates in the above can be taken uniformly and this produces the desired constants. 
\end{proof}

The second and final fact we prove here is the local control on quantitative discreteness of conjugate discrete subgroups of Lie groups that plays a role in our proof of Proposition \ref{key-reduction} (Modify to Continuity).

Recall our notation that $\Lambda$ is a discrete subgroup of a Lie group $G$ equipped with a right-invariant metric, $\kappa(\Lambda) = \text{inf} \{ d(e,g) \mid g \in \Lambda - \{e\} \}$, and conjugation is denoted by $\Psi_g: h \mapsto gh g^{-1}$.

    \begin{lemma}[Discreteness is Conjugation-Stable]\label{appendix-lemma-discreteness-conjugation-stable}
    Let $G$ be a Lie group and $\Lambda < G$ be a discrete subgroup of $G$. Consider the function $\eta: g \mapsto \kappa(\Psi_g(\Lambda))$. Let $g_0 \in G$ be given. Then there is a neighborhood $U$ of $g_0$ so that $\eta(h) > \kappa(\Psi_{g_0}(\Lambda))/3$ for all $h \in U$.
\end{lemma}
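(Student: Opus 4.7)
The plan is to use a two-step split: almost all $\gamma \in \Lambda$ will be handled by a uniform compactness argument on a fixed compact neighborhood of $g_0$, and only finitely many will remain, each of which can be handled by ordinary continuity of conjugation.

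Let $\kappa_0 := \kappa(\Psi_{g_0}(\Lambda)) > 0$. First I would fix a compact neighborhood $W$ of $g_0$ and consider the set of ``potentially bad'' elements
\[
\Lambda_W := \bigl\{\gamma \in \Lambda : \exists\, h \in W \text{ with } d(e, h\gamma h^{-1}) \leq \kappa_0/2 \bigr\}.
\]
If $\gamma \in \Lambda_W$ with witness $h \in W$, then $\gamma = h^{-1}(h\gamma h^{-1})h$ lies in the set $W^{-1}\, \overline{B_{\kappa_0/2}(e)}\, W$, which is compact as the continuous image of a compact set. Since $\Lambda$ is discrete, this forces $\Lambda_W$ to be \emph{finite}. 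For every $\gamma \in \Lambda - \Lambda_W$ (and $\gamma \neq e$), we already have $d(e, h\gamma h^{-1}) > \kappa_0/2$ for \emph{all} $h \in W$; these need no further attention.

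Next I would handle the finite set $\Lambda_W - \{e\}$ by continuity. For each such $\gamma$, the map $h \mapsto h\gamma h^{-1}$ is continuous, and at $h = g_0$ we have $d(e, g_0 \gamma g_0^{-1}) \geq \kappa_0 > \kappa_0/2$. So there is an open neighborhood $U_\gamma$ of $g_0$ on which $d(e, h\gamma h^{-1}) > \kappa_0/2$. Setting
\[
U := \operatorname{int}(W) \cap \bigcap_{\gamma \in \Lambda_W - \{e\}} U_\gamma
\]
gives an open neighborhood of $g_0$ (finite intersection), and combining the two cases shows $d(e, h\gamma h^{-1}) > \kappa_0/2$ for every $\gamma \in \Lambda - \{e\}$ and every $h \in U$. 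Taking the infimum yields $\eta(h) \geq \kappa_0/2 > \kappa_0/3$, as required.

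There is no real obstacle: the only subtle point is recognizing that although $\Lambda$ is infinite, only finitely many elements can be brought near the identity by conjugation by anything in the compact set $W$, so the rest is free. The right-invariance of the metric plays no essential role here (any left-invariant or bi-invariant metric, or indeed any metric continuous for the group operations, would do), but it is the natural convention consistent with the body of the paper.
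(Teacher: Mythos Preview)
Your proof is correct, and in fact yields the slightly stronger bound $\eta(h) \geq \kappa_0/2$. The approach, however, is genuinely different from the paper's. The paper reduces to $g_0 = e$ by replacing $\Lambda$ with $\Psi_{g_0}(\Lambda)$, then uses differentiability of conjugation to obtain a uniform Lipschitz constant $C$ for $\Psi_g$ on a compact neighborhood of $e$; choosing $\epsilon$ small relative to $\kappa_{\Lambda'}/C$ then shows directly that no nontrivial conjugate can enter a small ball. Your argument instead observes that the set of $\gamma \in \Lambda$ that \emph{could} be brought close to $e$ by some $h \in W$ is trapped in the compact set $W^{-1}\overline{B_{\kappa_0/2}(e)}W$ and is therefore finite, after which continuity handles each one individually. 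Your route is more elementary in that it uses only continuity of the group operations and local compactness, never the smooth structure; it would go through verbatim in any locally compact metric group. The paper's route is more quantitative (it produces an explicit radius in terms of a derivative bound) but leans on the Lie hypothesis that is not really needed for the conclusion.
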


\begin{proof}[Proof of Lemma \ref{appendix-lemma-discreteness-conjugation-stable}]
Of course for $h \in G$ we have $\Psi_h(\Lambda) = \Psi_{h g_0^{-1}} (\Psi_{g_0} (\Lambda)).$ By using this, we work with the group $\Lambda' = \Psi_{g_0}(\Lambda)$. It suffices to show:

\textbf{Claim.} There is a neighborhood $U$ of $e$ so that $2\kappa(\Psi_h\Lambda') > \kappa(\Lambda' ) $ for $h\in U$.

For a fixed $R > 0$, let $L_R = \overline{B_R(e)}$ be the closed ball of radius $R$ around the identity, and let $K \subset G$ be compact. Then $(g,p) \mapsto || D_p \Psi_g||$ is continuous. So there is some $C = C(K,R)$ so that $||D_p \Psi_g|| \leq C$ for all $g \in L_R$ and $p \in K$, and in particular $\Psi_{\cdot}(p)$ is $C$-Lipschitz on $L_R$.

Let $r > 0$ and $\epsilon > 0$. Then for all $h\in B_r(e)$ and $g \in B_\epsilon(e)$, we have $\Psi_g(h) \in B_{r + C\epsilon}(e)$. So let $\epsilon < \text{min}(\kappa_{\Lambda'}/3C, R)$ and $r< \kappa_{\Lambda'}/3$. Then for all $g \in B_\epsilon(e)$ and $h \in B_{r}(e)$, we have $\Psi_g(h) \in B_{2\kappa_{\Lambda'}/3}.$ Next, we note that if $\gamma \in G$, $g \in B_\epsilon(e)$, and $\Psi_g(\gamma) \in B_{r}(e)$, then $\gamma = \Psi_{g^{-1}}(\Psi_g(\gamma)) \in B_{2\kappa_{\Lambda'}/3}(e)$, and in particular $\gamma \notin \Lambda'$. We conclude that for all $g$ in $B_\epsilon(e)$ and $\gamma \in \Lambda'$ we have $g^{-1}\gamma g \notin B_{\kappa_{\Lambda}'/3}(e)$, and hence on $U = B_\epsilon(e)$, $\kappa_{g^{-1}\Lambda g} > \kappa_{\Lambda}/3$. \end{proof}

\bibliographystyle{plain}
\bibliography{refs}

\begin{thebibliography}{10}

\bibitem{alessandriniDavaloLi2021projective}
Daniele Alessandrini, Colin Davalo, and Qiongling Li.
\newblock Projective structures with (quasi-){H}itchin holonomy, 2021.
\newblock \href{https://arxiv.org/abs/2110.15407}{arxiv:2110.15407}.

\bibitem{alessandrini2023fiber}
Daniele Alessandrini, Sara Maloni, Nicolas Tholozan, and Anna Wienhard.
\newblock Fiber bundles associated with {A}nosov representations, 2023.
\newblock \href{https://arxiv.org/abs/2303.10786}{arxiv:2303.10786}.

\bibitem{benoist1997proprietes}
Yves Benoist.
\newblock Propri\'{e}t\'{e}s asymptotiques des groupes lin\'{e}aires.
\newblock {\em Geom. Funct. Anal.}, 7(1):1--47, 1997.

\bibitem{benoist2004convexesI}
Yves Benoist.
\newblock Convexes divisibles. {I}.
\newblock In {\em Algebraic groups and arithmetic}, pages 339--374. Tata Inst. Fund. Res., Mumbai, 2004.

\bibitem{benoist2023chat}
Yves Benoist.
\newblock Personal communication.
\newblock 2023.

\bibitem{benzecri1960thesis}
Jean-Paul Benz\'{e}cri.
\newblock Sur les vari\'{e}t\'{e}s localement affines et localement projectives.
\newblock {\em Bull. Soc. Math. France}, 88:229--332, 1960.

\bibitem{bers1961holomorphic}
Lipman Bers.
\newblock Holomorphic differentials as functions of moduli.
\newblock {\em Bull. Amer. Math. Soc.}, pages 206--210, 1961.

\bibitem{bowen1979hausdorff}
Rufus Bowen.
\newblock Hausdorff dimension of quasicircles.
\newblock {\em Inst. Hautes \'{E}tudes Sci. Publ. Math.}, (50):11--25, 1979.

\bibitem{brigeman2022hessian}
Martin Bridgeman, Beatrice Pozzetti, Andr\'{e}s Sambarino, and Anna Wienhard.
\newblock Hessian of {H}ausdorff dimension on purely imaginary directions.
\newblock {\em Bull. Lond. Math. Soc.}, 54(3):1027--1050, 2022.

\bibitem{cannon2007group}
James~W. Cannon and William~P. Thurston.
\newblock Group invariant {P}eano curves.
\newblock {\em Geom. Topol.}, 11:1315--1355, 2007.

\bibitem{choi1993convex}
Suhyoung Choi and William Goldman.
\newblock Convex real projective structures on closed surfaces are closed.
\newblock {\em Proc. Amer. Math. Soc.}, pages 657--661, 1993.

\bibitem{collierToulisseTholozan2019geometry}
Brian Collier, Nicolas Tholozan, and J\'{e}r\'{e}my Toulisse.
\newblock The geometry of maximal representations of surface groups into {${\rm SO}_0(2,n)$}.
\newblock {\em Duke Math. J.}, 168(15):2873--2949, 2019.

\bibitem{davalo2023nearly}
Colin Davalo.
\newblock Nearly geodesic immersions and domains of discontinuity, 2023.
\newblock \href{https://arxiv.org/abs/2303.11260}{arxiv:2303.11260}.

\bibitem{fock2021higher}
Vladimir Fock and Alexander Thomas.
\newblock Higher complex structures.
\newblock {\em Int. Math. Res. Not. IMRN}, pages 15873--15893, 2021.

\bibitem{goldman1990convex}
William Goldman.
\newblock Convex real projective structures on compact surfaces.
\newblock {\em J. Differential Geom.}, pages 791--845, 1990.

\bibitem{goldman2022geometric}
William~M. Goldman.
\newblock {\em Geometric structures on manifolds}, volume 227 of {\em Graduate Studies in Mathematics}.
\newblock American Mathematical Society, Providence, RI, [2022] \copyright 2022.

\bibitem{guichard2005regularity}
Olivier Guichard.
\newblock Sur la r\'{e}gularit\'{e} {H}\"{o}lder des convexes divisibles.
\newblock {\em Ergodic Theory Dynam. Systems}, 25(6):1857--1880, 2005.

\bibitem{guichard2005dualite}
Olivier Guichard.
\newblock Une dualit\'{e} pour les courbes hyperconvexes.
\newblock {\em Geom. Dedicata}, 112:141--164, 2005.

\bibitem{guichard2008composantes}
Olivier Guichard.
\newblock Composantes de {H}itchin et repr\'{e}sentations hyperconvexes de groupes de surface.
\newblock {\em J. Differential Geom.}, 80(3):391--431, 2008.

\bibitem{guichard2008convex}
Olivier Guichard and Anna Wienhard.
\newblock Convex foliated projective structures and the {H}itchin component for {${\rm PSL}_4({\bf R})$}.
\newblock {\em Duke Math. J.}, pages 381--445, 2008.

\bibitem{guichard2012anosov}
Olivier Guichard and Anna Wienhard.
\newblock Anosov representations: domains of discontinuity and applications.
\newblock {\em Invent. Math.}, pages 357--438, 2012.

\bibitem{sapirHensel2021projection}
Sebastian Hensel and Jenya Sapir.
\newblock A projection from filling currents to teichm{\"u}ller space, 2021.
\newblock \href{https://arxiv.org/abs/2109.14768}{arxiv:2109.14768}.

\bibitem{kacVinberg1967quasihomogeneous}
Victor Kac and Ernest~Borisovich Vinberg.
\newblock Quasi-homogeneous cones.
\newblock {\em Math. Notes}, 1:347--354, 1967.

\bibitem{labourie2006anosov}
Fran\c{c}ois Labourie.
\newblock Anosov flows, surface groups and curves in projective space.
\newblock {\em Invent. Math.}, 165(1):51--114, 2006.

\bibitem{labourie2007flat}
Fran\c{c}ois Labourie.
\newblock Flat projective structures on surfaces and cubic holomorphic differentials.
\newblock {\em Pure Appl. Math. Q.}, 2007.

\bibitem{labourie2008cross}
Fran\c{c}ois Labourie.
\newblock Cross ratios, {A}nosov representations and the energy functional on {T}eichm\"{u}ller space.
\newblock {\em Ann. Sci. \'{E}c. Norm. Sup\'{e}r.}, pages 437--469, 2008.

\bibitem{labourie2017cyclic}
Fran\c{c}ois Labourie.
\newblock Cyclic surfaces and {H}itchin components in rank 2.
\newblock {\em Ann. of Math.}, pages 1--58, 2017.

\bibitem{loftin2001affine}
John Loftin.
\newblock Affine spheres and convex {$\Bbb{RP}^n$}-manifolds.
\newblock {\em Amer. J. Math.}, pages 255--274, 2001.

\bibitem{markovic2022non}
Vladimir Markovi\'{c}.
\newblock Non-uniqueness of minimal surfaces in a product of closed {R}iemann surfaces.
\newblock {\em Geom. Funct. Anal.}, pages 31--52, 2022.

\bibitem{markovic2022unstable}
Vladmir Markovi\'c, Nathaniel Sagman, and Peter Smillie.
\newblock Unstable minimal surfaces in {$\mathbb{R}^n$} and in products of hyperbolic surfaces, 2022.
\newblock \href{https://arxiv.org/abs/2206.02938}{arxiv:2206.02938}.

\bibitem{nolte2022canonical}
Alexander Nolte.
\newblock Canonical maps from spaces of higher complex structures to {H}itchin components, 2022.
\newblock \href{https://arxiv.org/abs/2204.04732}{arxiv:2204.04732}.

\bibitem{potrie2017eigenvalues-entropy}
Rafael Potrie and Andr\'{e}s Sambarino.
\newblock Eigenvalues and entropy of a {H}itchin representation.
\newblock {\em Invent. Math.}, 209(3):885--925, 2017.

\bibitem{pozzettiSambarino2022lipschitz}
Beatrice Pozetti and Andr{\'e}s Sambarino.
\newblock Lipschitz limit sets revisited: {H}ilbert entropy and non-differentiability, 2022.
\newblock \href{https://hal.science/hal-03898485}{hal-03898485}.

\bibitem{sagman2022unstable}
Nathaniel Sagman and Peter Smillie.
\newblock Unstable minimal surfaces in symmetric spaces of non-compact type, 2022.
\newblock \href{https://arxiv.org/abs/2208.04885}{arxiv:2208.04885}.

\bibitem{sambarino2020infinitesimal}
Andr\'es Sambarino.
\newblock Infinitesimal {Z}ariski closures of positive representations, 2020.
\newblock \href{https://arxiv.org/abs/2012.10276}{arxiv:2012.10276}.

\bibitem{wienhard2018Invitation}
Anna Wienhard.
\newblock An invitation to higher {T}eichm\"{u}ller theory.
\newblock In {\em Proceedings of the {I}nternational {C}ongress of {M}athematicians---{R}io de {J}aneiro 2018. {V}ol. {II}. {I}nvited lectures}, pages 1013--1039. World Sci. Publ., Hackensack, NJ, 2018.

\bibitem{zhangZimmer2017regularity}
Tengren Zhang and Andrew Zimmer.
\newblock Regularity of limit sets of {A}nosov representations, 2017.
\newblock \href{https://arxiv.org/abs/1903.11021}{arxiv:1903.11021}.

\end{thebibliography}

\end{document}